\newcommand{\C}{\mathbf{C}}
\renewcommand{\P}{\mathbf{P}}
\newcommand{\R}{\mathbf{R}}
\renewcommand{\phi}{\varphi}
\renewcommand{\L}{\mathcal{L}}
\DeclareMathAlphabet{\altmathcal}{OMS}{cmsy}{m}{n}%%%
\renewcommand{\O}{\altmathcal{O}}%%%%
\newcommand{\I}{\mathcal{I}}
\newcommand{\F}{\mathcal{F}}
\newcommand{\E}{\mathcal{E}}
\newcommand{\Z}{\mathbf{Z}}
\newcommand{\bX}{\overline{X}}
\newcommand{\Sym}{\mathrm{Sym}}
\newcommand{\wX}{\widetilde{X}}
\newcommand{\wB}{\widetilde{B}}
\newcommand{\Pic}{\mathrm{Pic}}
\newcommand{\G}{\Gamma}
\renewcommand{\l}{\ell}
\renewcommand{\b}{\overline}
\newcommand{\w}{\widetilde}
\newcommand{\OPn}{\altmathcal{O}_{\mathbf{P}^{n}}}%%%%
\newcommand{\OPN}{\altmathcal{O}_{\mathbf{P}^{N}}}%%%%
\renewcommand{\o}{\otimes}
\newcommand{\eps}{\varepsilon}
\newcommand{\mult}{\mathrm{mult}}
\newcommand{\m}{\mathfrak{m}}
\newcommand{\J}{\mathcal{J}}
\newcommand{\Bs}{\mathrm{Bs}}
\newcommand{\s}{\sigma}
\renewcommand{\to}{\rightarrow}
\newcommand{\Ram}{\mathrm{Ram}}
\newcommand{\supp}{\mathrm{supp}}
\newcommand{\B}{\mathbf{B}}
\newcommand{\mc}{\mathcal}
\DeclareMathOperator{\rk}{rk}
\DeclareMathOperator{\Imm}{Im}
\DeclareMathOperator{\Spec}{Spec}
\theoremstyle{plain}
\newtheorem{thm}{Theorem}[section]
\theoremstyle{plain}
\newtheorem{bthm}{Theorem}
\theoremstyle{plain}
\theoremstyle{definition}
\newtheorem{defi}[thm]{Definition}
\theoremstyle{plain}
\newtheorem{prop}[thm]{Proposition}
\theoremstyle{plain}
\theoremstyle{plain}
\newtheorem{bcor}[bthm]{Corollary}
\theoremstyle{plain}
\newtheorem{lemma}[thm]{Lemma}
\theoremstyle{definition}
\newtheorem{rmk}[thm]{Remark}
\theoremstyle{plain}
\theoremstyle{plain}
\theoremstyle{definition}
\newtheorem{ex}[thm]{Example}
\theoremstyle{definition}
\newtheorem{notat}[thm]{Notation}
\numberwithin{equation}{section}
\title[Positivity of Ulrich bundles in the ample and free case]{Positivity of Ulrich bundles in the ample and free case}
\author[Valerio Buttinelli]{Valerio Buttinelli*}
\address{Dipartimento di Matematica "Guido Castelnuovo"\\
	Sapienza Universit\`a di Roma\\
	Piazzale Aldo Moro 5, 00185 Roma, Italy}
\thanks{*Work produced as a part of the author's PhD thesis under the supervision of Angelo Felice Lopez}
\email[Valerio~Buttinelli]{valerio.buttinelli@uniroma1.it}
\begin{document}

	\begin{abstract}
		We study the positivity of an Ulrich vector bundle defined with respect to a globally generated ample line bundle. First we prove a generalization of a Lopez theorem on the first Chern class and the bigness of an Ulrich bundle. Then, under some additional assumptions on the polarization, we give a description of its augmented base locus, which consequently leads to a characterization of the V-bigness and of the ampleness of an Ulrich bundle in this setting.
	\end{abstract}
	
	\maketitle
	
	\section{Introduction}
	A class of vector bundles which aroused a certain interest in recent years is the one of Ulrich bundles. Given a smooth projective variety $X\subset \P^N,$ a vector bundle $\E$ on $X$ is an Ulrich bundle if $H^i(X,\E(-p))=0$ for all $i\geq0$ and $1\leq p\leq \dim X.$ Despite such bundles were introduced in the framework of commutative algebra in \cite{ulrich1984gorenstein}, they attracted the attention of algebraic geometers after the papers \cite{eisenbud2003resultants, eisenbud2011boij}, where the authors showed that the Chow form of a variety $X$ carrying an Ulrich bundle is determinantal and that the (rational) cone of cohomology tables of vector bundles on $X$ is the same as that of $\P^{\dim X}.$ Since then, a lot of work has been done in the study of Ulrich bundles (see \cite{coskun2017survey, beauville2018introduction, costa2021ulrich} for an account). However, the conjecture raised by Eisenbud and Schreyer in \cite{eisenbud2003resultants} concerning the existence of such bundles is still wide open, even in dimension two.
	
	In this paper, we will consider a slightly more general setting: the polarization will no longer be very ample, but simply ample and globally generated.
	
	\begin{defi}
		Let $X$ be a smooth projective variety of dimension $n\geq 1$ and let $B$ a globally generated ample line bundle. A vector bundle $\E$ is said to be \emph{$B$-Ulrich} if $H^i(X,\E(-pB))=0$ for all $i\geq0$ and $1\leq p\leq n.$
	\end{defi}
	
	This choice has three principal motivations:  first of all, the basic properties of the “usual Ulrich bundles” continue to hold also in this situation (Lemma \ref{lem:claim}). Secondly, many (smooth projective) varieties, such as Del Pezzo manifolds of degree $2$ or smooth cyclic coverings, come with a natural base-point-free polarization which generally is not very ample but that reveals more features on the underlying variety than a possible embedding. In this direction, it has been proved in \cite{sebastian2022rank} that a double cover $\phi\colon S\to \P^2$ branched over a general smooth irreducible curve of degree $2s\geq6$ carries a special Ulrich bundle of rank $2$ for $B=\phi^\ast\O_{\P^2}(1).$ In any case, if a variety supports an Ulrich bundle with respect to a globally generated polarization, then there is an Ulrich bundle for all very ample multiples (Lemma \ref{lem:claim}(vi)). Finally, the theory obtained by relaxing the hypothesis on the polarization is different from that of Ulrich bundles defined with respect to a hyperplane section: in Examples \ref{ex:DP-surface} - \ref{ex:DP-threefold} we find two Ulrich bundles defined with respect to a globally generated (but non-very ample) polarization which are not big and which are not ascribable to the classification of non-big (usual) Ulrich bundles on surfaces and threefolds in \cite[Theorems 1 - 2]{lopez2021classification}.
	
	In this work, we will study the positivity of a $B$-Ulrich bundle defined with respect to a globally generated ample line bundle $B.$ The first result concerns the first Chern class and the bigness of the Ulrich bundle. The goal is to prove an analogue of \cite[Theorem 7.2 \& Theorem 1]{lopez2022positivity} trying to follow the same arguments. In this way one gets aware of the differences with the usual setting. The first issue that we meet is that we generally lose the separation properties of a hyperplane section. It becomes necessary to study the local positivity of the polarization in order to find the necessary condition on the variety to recover those properties. To this end, we will consider the Seshadri constants of $B$ and, to get a more geometric insight, we will also prove the existence of some Seshadri curves for $B$ (Lemmas \ref{lem:7.1} - \ref{lem:7.1-2}). At the end we will show that if the variety is not (generically) covered by such curves, then the $c_1$ is positive and the Ulrich bundle is big. In particular, we get the following result.
	
	\begin{bthm}\label{thm:main}
		Let $X$ be a smooth projective variety of dimension $n\geq1$ and let $B$ a globally generated ample line bundle on $X$ with $B^n=d.$ Let $\E$ be a vector bundle of rank $r$ which is $0$-regular with respect to $B.$ Then 
		\begin{equation}\label{eq:ulrich-c1}
			c_1(\E)^k\cdot Z\geq r^k\mult_x(Z)
		\end{equation} holds for every $x\in X$ and for every subvariety $Z\subset X$ of dimension $k\geq1$ passing through $x$ provided that the following conditions are satisfied:
		\begin{itemize}\setlength\itemsep{0em}
			\item[(a)] $x\notin\Ram(\phi_B),$
			\item[(b)] $\eps(B;\phi_B^{-1}(\phi_{B}^{\hspace{0.000001mm}}(x)))>1.$
		\end{itemize} 
		In particular, if $X$ is not generically covered by $1$-Seshadri curves for $\phi_B,$
		then $\E$ is V-big and
		\begin{equation}\label{eq:ulrich-bigness}
			c_1(\E)^n\geq r^n.
		\end{equation}
		Moreover, if $\E$ is $B$-Ulrich of rank $r\geq 2,$ then
		\begin{equation}\label{eq:ulrich-c1^n}
			c_1(\E)^n\geq r(d-1).
		\end{equation}
	\end{bthm}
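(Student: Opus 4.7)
The proof follows the strategy of Lopez \cite{lopez2022positivity}, adapted to the globally generated (rather than very ample) setting. Since $\E$ is $0$-regular with respect to $B$, the generalized Castelnuovo--Mumford regularity theorem (Lemma~\ref{lem:claim}) shows that $\E$ is globally generated, and more generally that $\E\o B^{\o t}$ is globally generated for every $t\geq 0$. The broad plan is to reduce \eqref{eq:ulrich-c1} to a statement about the vanishing order at $x$ and non-vanishing along $Z$ of carefully chosen global sections of $\E$, and then to convert this vanishing data into an intersection number via a degeneracy locus computation.

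For the multiplicity inequality \eqref{eq:ulrich-c1}, I would proceed as follows. Hypotheses (a) and (b) guarantee, via the Seshadri-curve results of Lemmas~\ref{lem:7.1} and \ref{lem:7.1-2}, that on the blow-up $\nu\colon\wX\to X$ at $x$ with exceptional divisor $\bE$, the line bundle $\nu^\ast B - \bE$ is big and that $B$ separates $x$ from its infinitesimal neighbours in a manner sufficient to play the role of a hyperplane section. Combined with the global generation coming from $0$-regularity, this produces, for each desired order, sections of $\E$ with prescribed vanishing along $Z$ at $x$. A Bloch--Gieseker style degeneracy-locus argument on $\wX$ (taking $r-1$ general sections to cut $\E$ down to a line bundle, and iterating intersections with the strict transform of $Z$ at $\bE$, as in \cite[Theorem 7.2]{lopez2022positivity}) then converts the vanishing into \eqref{eq:ulrich-c1}.

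For the ``in particular'' part: if $X$ is not generically covered by $1$-Seshadri curves for $\phi_B$, then the locus where (b) fails is a proper closed subset, and $\Ram(\phi_B)$ is closed of positive codimension since $\phi_B$ is generically unramified. A general $x\in X$ therefore satisfies both (a) and (b). Taking $Z = X$ and $k = n$ in \eqref{eq:ulrich-c1}, and using $\mult_x(X) = 1$, yields \eqref{eq:ulrich-bigness}. V-bigness follows because the multiplicity inequality holds for every subvariety through the general $x$, which via the standard intersection-theoretic criterion for V-bigness of vector bundles implies that $\E$ is V-big.

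The main obstacle is the refined Ulrich bound \eqref{eq:ulrich-c1^n}. Here I would exploit the specific numerical identities enjoyed by $B$-Ulrich bundles, notably $c_1(\E)\cdot B^{n-1} = \tfrac{r(n+1)}{2}d$ (a direct consequence of Hirzebruch--Riemann--Roch applied to the vanishings $\chi(\E(-pB))=0$ for $1\leq p\leq n$) together with $h^0(\E)=rd$. Combining these with the bigness \eqref{eq:ulrich-bigness} via a Khovanskii--Teissier mixed-degree inequality, or alternatively by exhibiting enough independent sections vanishing to high order at a general $x$ and re-running the degeneracy-locus argument with this surplus, one should arrive at $c_1(\E)^n \geq r(d-1)$. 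The difficulty is that \eqref{eq:ulrich-c1^n} is genuinely stronger than \eqref{eq:ulrich-bigness} precisely in the high-degree regime $d\gg r^{n-1}$, so the argument must actually use the $rd$ available Ulrich sections rather than merely the generic positivity invoked for \eqref{eq:ulrich-bigness}.
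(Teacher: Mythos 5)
Your skeleton for \eqref{eq:ulrich-c1} points in the right direction (blow up at $x$, use $0$-regularity and the local positivity of $B$ to make $\mu^\ast c_1(\E)-rE$ positive on the strict transform of $Z$), but it glosses over exactly the step that is the technical heart of the paper. Under (a)+(b) the line bundle $\wB=\mu^\ast B-E$ is \emph{ample} (not merely big, as you write --- bigness would be useless here), yet it is \emph{not} globally generated: its base scheme is the preimage of the remaining points of the fibre $\phi_B^{-1}(\phi_B(x))$. Hence the usual Castelnuovo--Mumford argument, which needs the twisting polarization to be base-point-free in order to run the Koszul complex, does not directly show that $\w\E=\mu^\ast\E(-E)$ is globally generated. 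The paper proves a dedicated version of CM regularity (Proposition \ref{prop:castelnuovo-mumford}) for an ample line bundle with $0$-dimensional base scheme, under the assumption that the blow-up along that base scheme carries a suitable ample divisor; verifying this last ampleness is where hypothesis (b) enters, via Lemma \ref{lem:seshadri-subscheme2} and a strict-nefness check. Your phrase ``this produces, for each desired order, sections of $\E$ with prescribed vanishing'' hides this entire argument. Once global generation of $\w\E$ is known, no Bloch--Gieseker or degeneracy-locus machinery is needed: $\det\w\E=\mu^\ast c_1(\E)-rE$ is globally generated, hence nef, and \eqref{eq:ulrich-c1} follows at once from Kleiman's theorem applied to $\w Z$.

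There are two further gaps. First, your derivation of V-bigness is incorrect as stated: inequality \eqref{eq:ulrich-c1} concerns only $c_1(\E)$, and there is no ``intersection-theoretic criterion'' deducing V-bigness of a higher-rank bundle from positivity of intersections of its first Chern class (V-bigness is strictly stronger than bigness of $\E$, let alone of $\det\E$). The paper instead uses that $\w\E$ itself is globally generated, hence nef, which gives $\eps(\E;x)\geq1$ by Remark \ref{rmk:seshadri-vector}(b), so $x\notin\B_+(\E)$ and Theorem \ref{thm:V-big} applies. Second, for \eqref{eq:ulrich-c1^n} neither of your suggestions works: the Khovanskii--Teissier inequality $(c_1(\E)\cdot B^{n-1})^n\geq c_1(\E)^n\cdot(B^n)^{n-1}$ bounds $c_1(\E)^n$ from \emph{above}, not below, and ``re-running the degeneracy-locus argument'' is not carried out. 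The paper's proof is a direct application of Sierra's degree bound for globally generated vector bundles \cite{sierra2009degree}, giving $c_1(\E)^n\geq h^0(X,\E)-r=r(d-1)$ once one checks, using (b) and Lemma \ref{lem:claim}(ix), that $\E$ cannot be of the exceptional form $\O_X^{\oplus(r-1)}\oplus\det(\E)$. Note also that your formula $c_1(\E)\cdot B^{n-1}=\tfrac{r(n+1)}{2}d$ omits the $K_X\cdot B^{n-1}$ term appearing in Lemma \ref{lem:claim}(iii).
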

	
	The positivity of a vector bundle is strictly related to its asymptotic base loci (Definition \ref{def:augmented} and \cite[Theorem 1.1]{bauer2015positivity}). A $B$-Ulrich bundle is generated by global sections (Lemma \ref{lem:claim}(i)), hence the stable base locus and the restricted base locus are empty. Under some additional assumptions on the polarization $B,$ still including all very ample line bundles, we can obtain a complete description of the augmented base locus of a $B$-Ulrich bundle $\E$: it is the union of all $B$-lines, as defined below, on which $\E$ is not ample. If we weaken the hypotheses on $B,$ this characterization (and also Theorem \ref{thm:ulrich-ampleness}) may no longer hold (see Remarks \ref{rmk:DP-surface} - \ref{rmk:DP-threefold} - \ref{rmk:ulrich-curve}).
	
	\begin{defi}\label{def:B-line}
		Let $X$ be projective variety and let $B$ be a globally generated ample line bundle on $X.$ A curve $C\subset X$ is a \emph{$B$-line} if $B\cdot C=1.$
	\end{defi}
	
	To do this we will make use of the characterization of the augmented base locus of a vector bundle in terms of Seshadri constant (see Definition \ref{def:seshadri-vector} and Remark \ref{rmk:augmented-vector}). In conclusion we prove the following.
	
	\begin{bthm}\label{thm:ulrich-augmented}
		Let $X$ be a smooth projective variety and let $B$ a globally generated ample line bundle such that there is a linear series $|V|\subseteq |B|$ inducing a morphism which is étale onto its schematic image. Let $\E$ be a $B$-Ulrich bundle on $X$ and let $x\in X$ be a point. Then $\eps(\E;x)=0$ if and only if there exists a $B$-line $\G\subset X$ passing through $x$ such that $\E_{|\G}$ is not ample on $\G.$ 
		In particular, 
		\begin{equation}\label{eq:ulrich-augmented}
			\B_+(\E)=\bigcup_{\G\subset X}\G
		\end{equation} 
		where $\G$ ranges over all $B$-lines in $X$ such that $\E_{|\G}$ is not ample on $\G.$
	\end{bthm}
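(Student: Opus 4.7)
My plan is to work on the projective bundle $\pi\colon\P(\E)\to X$ with tautological quotient $L=\O_{\P(\E)}(1)$. Since every $B$-Ulrich bundle is globally generated (Lemma~\ref{lem:claim}(i)), so is $L$, hence $L$ is nef. Using Definition~\ref{def:seshadri-vector} and Remark~\ref{rmk:augmented-vector}, $\eps(\E;x)=0$ is equivalent to $x\in\B_+(\E)=\pi(\B_+(L))$; and for the globally generated $L$, $\B_+(L)$ equals the union of the irreducible curves $\w C\subset\P(\E)$ contracted by the associated morphism, i.e.\ with $L\cdot\w C=0$. Fibres of $\pi$ meet $L$ positively, so each such $\w C$ maps onto a curve $C=\pi(\w C)\subset X$ encoding a quotient line bundle of $\E|_C$ of degree zero, equivalently to $\E|_C$ being nef but not ample. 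This reduces the theorem to the biconditional that $x$ lies on an irreducible curve $C$ with $\E|_C$ not ample if and only if $x$ lies on a $B$-line $\G$ with $\E|_\G$ not ample; the ``$\Leftarrow$'' direction then comes for free by taking $C=\G$.

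\textbf{Step 2: Reduction to the very ample case.} For the forward direction I would set $Y:=\phi_V(X)\subset\P(V^\vee)$ so that $\phi_V\colon X\to Y$ is finite étale onto the smooth $Y$, on which $\O_Y(1)$ is very ample. Setting $\E':=\phi_{V*}\E$, the projection formula and the Ulrich vanishings for $\E$ yield
\[
H^i\bigl(Y,\E'\o\O_Y(-p)\bigr)=H^i\bigl(X,\E(-pB)\bigr)=0\quad\text{for }1\le p\le n,
\]
so $\E'$ is Ulrich on $(Y,\O_Y(1))$. The counit $\phi_V^*\E'\twoheadrightarrow\E$ induces a finite morphism $f\colon\P(\E)\to\P(\E')$ with $f^*L_{\E'}=L$, and hence $\B_+(L)=f^{-1}(\B_+(L_{\E'}))$; a point $y\in\pi^{-1}(x)\cap\B_+(L)$ furnished by Step~1 thus gives a distinguished $y':=f(y)\in\pi_{\E'}^{-1}(\phi_V(x))\cap\B_+(L_{\E'})$. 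I would then invoke the very ample case of the theorem applied to $(\E',y')$ to produce a curve $\w\ell\ni y'$ in $\P(\E')$ with $L_{\E'}\cdot\w\ell=0$ whose image $\ell:=\pi_{\E'}(\w\ell)\subset Y$ is a line through $\phi_V(x)$. Since $\ell\cong\P^1$ is simply connected, the finite étale cover $\phi_V^{-1}(\ell)$ splits as a disjoint union $\bigsqcup_j\G_j$ of isomorphic copies of $\P^1$, each satisfying $B\cdot\G_j=\O_Y(1)\cdot\ell=1$ and hence a $B$-line; exactly one of them, $\G$, passes through $x$.

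\textbf{Step 3: The crux.} To finish, I would pick an irreducible component $\w\G$ of the finite preimage $f^{-1}(\w\ell)$ containing $y$. The projection formula computes $L\cdot\w\G=L_{\E'}\cdot f_*\w\G=(\deg f|_{\w\G})\,L_{\E'}\cdot\w\ell=0$, and $\pi(\w\G)$ is a connected curve inside $\phi_V^{-1}(\ell)=\bigsqcup_j\G_j$ meeting $x$, hence equal to $\G$. Thus $\w\G\subset\P(\E|_\G)$ is a curve of $L$-degree zero, producing the desired degree-zero quotient of $\E|_\G$ and showing that $\E|_\G$ is not ample. The equality \eqref{eq:ulrich-augmented} will then follow by varying $x$ over $\B_+(\E)$. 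The main obstacle is the very ample case invoked in Step~2 in its refined form --- that every point $y'\in\B_+(L_{\E'})$ is contained in a curve $\w\ell$ of $L_{\E'}$-degree zero whose projection to $Y$ is a line --- which I would prove either by adapting \cite{lopez2022positivity} to work throughout the projective bundle $\P(\E')$ or by a direct argument using the very ample embedding of $Y$.
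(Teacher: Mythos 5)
Your overall strategy --- describe $\B_+(\E)$ as $\pi(\B_+(\O_{\P(\E)}(1)))$, use Nakamaye to write $\B_+$ of the semiample tautological bundle as a union of contracted curves, and then descend everything through the étale cover via the finite morphism $f\colon\P(\E)\to\P(\E')$ --- is genuinely different from the paper's, which never touches $\B_+$ of the pushforward directly: the paper instead proves separation lemmas (surjectivity of $H^0(X,\E)\to H^0(Z,\E_{|Z})$ for length-$2$ subschemes $Z$ not lying on a bad $B$-line, obtained by pushing forward to $\bX$ and quoting the separation lemmas of Lopez--Sierra), converts these via Remark \ref{rmk:seshadri-jet} into $\eps(\O_{\P(\E)}(1);y)\geq1$ for all $y\in\P(\E(x))$, and concludes by contraposition. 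However, your Step 3 has a genuine gap. The morphism $f\colon\P(\E)\to\P(\E')$ is finite but very far from surjective: its image has codimension $(\deg\phi_V-1)\rk(\E)$ in $\P(\E')$, so for a general curve $\w\ell\subset\P(\E')$ the preimage $f^{-1}(\w\ell)$ is a \emph{finite} set of points and contains no component dominating $\w\ell$. The same issue invalidates the claimed equality $\B_+(L)=f^{-1}(\B_+(L_{\E'}))$ (only the inclusion $f(\B_+(L))\subseteq\B_+(L_{\E'})$, which is what you actually use, is clear from the null-locus description). To repair Step 3 you must show that the specific curve $\w\ell$ is forced into the image, and in fact into $\P(\E_{|\G})$: since $\E'_{|\ell}\cong\bigoplus_j\E_{|\G_j}$ with each $\E_{|\G_j}$ globally generated on $\P^1$, any degree-zero quotient line bundle of (the pullback to the normalization of) $\E'_{|\ell}$ kills every summand $\O_{\P^1}(a)$ with $a>0$ and is given by constants on the trivial summands; the subvarieties $\P(\E_{|\G_j})\subset\P(\E'_{|\ell})$ are fibrewise disjoint, so the condition $y'=f(y)\in\w\ell\cap\P(\E(x))$ forces those constants to vanish on all summands with $j\neq1$, hence $\w\ell\subset\P(\E_{|\G})$ and $\E_{|\G}$ has a trivial quotient. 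This argument is absent from your write-up and is exactly where the conclusion ``the bad line upstairs is the one through $x$, not through another point of the fibre $\phi_V^{-1}(\phi_V(x))$'' gets proved.

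The second issue is that you defer the entire engine of the proof to ``the very ample case in refined form'': that every $y'\in\B_+(L_{\E'})$ lies on a curve of $L_{\E'}$-degree zero projecting to a line of $\P^N$. This is not a black box you can cite --- the paper does not quote such a statement, and \cite{lopez2022positivity} and \cite{lopez2023geometrical} supply only the separation lemmas and the ampleness criterion, not the curve-through-every-point form of the $\B_+$ description. Proving it (even just for the embedded $(\bX,\O_{\bX}(1))$) is essentially the same amount of work as the paper's Lemmas \ref{lem:restriction}--\ref{lem:step3-4'}, so as it stands your proposal reduces the theorem to an unproven statement of comparable difficulty rather than to known results. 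The ``if'' direction via Step 1 and the identification of the components of $\phi_V^{-1}(\ell)$ as $B$-lines in Step 2 are fine and agree with the paper's Lemma \ref{lem:step1} and Lemma \ref{lem:restriction} respectively.
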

	
	A simple consequence of Theorem \ref{thm:ulrich-augmented} is the characterization of the V-bigness (Definition \ref{def:V-big}) and of the ampleness of an Ulrich bundle in this setting. 
	
	\begin{bcor}\label{cor:ulrich-augmented}
		Let $X$ be a smooth projective variety and let $B$ a globally generated ample line bundle such that there is a linear series $|V|\subseteq |B|$ inducing a morphism which is étale onto its schematic image. Let $\E$ be a $B$-Ulrich bundle. Then:
		\begin{itemize}
			\item[(a)]  $\E$ is V-big if and only if $X$ is not covered by $B$-lines $\G\subset X$ on which $\E_{|\G}$ is not ample.
			\item[(b)] $\E$ is ample if and only if either $X$ contains no $B$-lines or $\E_{|\G}$ is ample on every $B$-line $\G\subset X.$
		\end{itemize}
	\end{bcor}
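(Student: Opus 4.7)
The plan is to read off both statements as direct consequences of Theorem \ref{thm:ulrich-augmented}, combined with the standard characterizations of V-bigness and ampleness of a vector bundle in terms of its augmented base locus $\B_+(\E)$ (as recalled in the paper via Definition \ref{def:augmented} and \cite[Theorem 1.1]{bauer2015positivity}). Under the hypothesis on $B$, Theorem \ref{thm:ulrich-augmented} identifies
\[
\B_+(\E)=\bigcup_{\G}\G,
\]
where $\G$ ranges over all $B$-lines with $\E_{|\G}$ not ample; after this, no further geometry is needed.

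For part (a), I would invoke that $\E$ is V-big if and only if $\B_+(\E)\neq X$ (this is the content of Definition \ref{def:V-big} together with the cited characterization of V-bigness via the volume/augmented base locus). Plugging in the description of $\B_+(\E)$ from Theorem \ref{thm:ulrich-augmented}, the condition $\B_+(\E)\neq X$ translates precisely into the statement that $X$ is not set-theoretically covered by $B$-lines $\G$ on which $\E_{|\G}$ fails to be ample.

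For part (b), I would use that a vector bundle on a projective variety is ample if and only if $\B_+(\E)=\emptyset$ (again via the Seshadri/base locus characterization in the paper's Remark \ref{rmk:augmented-vector}). By Theorem \ref{thm:ulrich-augmented}, the empty-ness of $\B_+(\E)$ is equivalent to the non-existence of any $B$-line $\G\subset X$ with $\E_{|\G}$ not ample, which is the stated dichotomy: either $X$ contains no $B$-lines at all, or $\E_{|\G}$ is ample on every $B$-line.

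I do not expect any genuine obstacle; the only point to double-check is that the equivalences ``V-big $\Leftrightarrow$ $\B_+(\E)\neq X$'' and ``ample $\Leftrightarrow$ $\B_+(\E)=\emptyset$'' apply verbatim in the generality of vector bundles on smooth projective varieties used here. Both are encoded in the framework set up in Definition \ref{def:augmented}, Definition \ref{def:seshadri-vector} and Remark \ref{rmk:augmented-vector}, so the corollary reduces to a direct substitution into the conclusion of Theorem \ref{thm:ulrich-augmented}.
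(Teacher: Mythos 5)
Your proposal is correct and matches the paper's own proof, which likewise deduces the corollary directly from Theorem \ref{thm:ulrich-augmented} together with Theorem \ref{thm:V-big} (V-bigness $\Leftrightarrow$ $\B_+(\E)\neq X$) and Remark \ref{rmk:augmented-vector} (ampleness of a nef bundle $\Leftrightarrow$ $\B_+(\E)=\emptyset$, applicable here since a $B$-Ulrich bundle is globally generated hence nef). No further comment is needed.
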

	
	Regarding the ampleness of an Ulrich bundle defined with respect to a polarization as above, we can be more precise. All the technical results about the “separation properties” of the bundle, needed for Theorem \ref{thm:ulrich-augmented}, lead to a (very slight) generalization of \cite[Theorem 1]{lopez2023geometrical} in this setting.
	
	\begin{bthm}\label{thm:ulrich-ampleness}
		Let $X$ be a smooth projective variety and let $B$ a globally generated ample line bundle such that there is a linear series $|V|\subseteq |B|$ inducing a morphism which is étale onto its schematic image. Let $\E$ be a $B$-Ulrich bundle on $X.$ Then the following are equivalent:
		\begin{itemize}
			\item[(1)]$\E$ is $1$-very ample.
			\item[(2)] $\E$ is very ample.
			\item[(3)] $\E$ is ample.
			\item[(4)] Either $X$ contains no $B$-lines or $\E_{|\G}$ is ample on every $B$-line $\G\subset X.$
		\end{itemize}
	\end{bthm}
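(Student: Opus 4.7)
The plan is to prove the chain $(2) \Rightarrow (1) \Rightarrow (3) \Rightarrow (4) \Rightarrow (2)$. Three of these implications are essentially formal: $(2) \Rightarrow (1)$ is immediate from the definitions of very ampleness and $k$-very ampleness of a vector bundle, $(1) \Rightarrow (3)$ follows since a $1$-very ample bundle is globally generated and separates infinitesimal length-two neighborhoods, which forces the tautological quotient on $\P(\E)$ to be ample, and $(3) \Rightarrow (4)$ is the observation that ampleness is preserved under restriction to curves. Moreover $(3) \Leftrightarrow (4)$ is already Corollary \ref{cor:ulrich-augmented}(b), so the whole content of the theorem lies in the implication $(4) \Rightarrow (2)$.

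For $(4) \Rightarrow (2)$, I intend to follow closely the strategy of \cite[Theorem 1]{lopez2023geometrical}, adapting each step to the present setting where the morphism $\psi : X \to \P^N$ induced by $|V| \subseteq |B|$ is étale onto its schematic image rather than being an embedding. Very ampleness of $\E$ reduces to showing that for every length-two subscheme $Z \subset X$ the evaluation $H^0(X,\E) \to H^0(Z,\E \otimes \O_Z)$ is surjective. The idea is to choose a linear subspace $\Lambda \subset \P^N$ containing $\psi(Z)$ (a line or a point, depending on whether $\psi$ is injective on $Z$) and form $\G = \psi^{-1}(\Lambda)$; étaleness guarantees that, locally around the support of $Z$, $\G$ is a disjoint union of copies of $\Lambda$. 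Pulling back the Koszul-type resolution of $\I_\Lambda$ inside $\P^N$ and twisting by $\E$ produces an exact complex linking $\E \otimes \I_\G$ with successive twists $\E(-pB)$. The Ulrich vanishings $H^i(X,\E(-pB))=0$ for $i \geq 0$ and $1 \leq p \leq n$ from Lemma \ref{lem:claim} then collapse the hypercohomology and reduce the surjectivity statement on $X$ to the analogous surjectivity on $\G$, which either follows automatically when $\G$ has sufficiently large dimension (via the Ulrich-type restriction properties), or, when $\G$ is a $B$-line, from hypothesis (4) together with the classical fact that an ample bundle on a smooth rational curve is very ample.

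The main obstacle I foresee is the cohomological control in the degenerate case where $\psi$ identifies the two points of $Z$, so that $\psi(Z)$ has length strictly less than two. Here the assumption that $\psi$ is étale onto its image is crucial: it allows one to work Zariski-locally on $X$ around the support of $Z$ as if on a copy of the image inside $\P^N$, and the preimage $\G$ will then have at least two connected components through $Z$, on each of which the argument above can be applied separately before recombining. Once this degenerate situation is handled, the remaining cohomological chase is a routine adaptation of Lopez's argument, with $B$ playing the role of the hyperplane bundle and $\psi$ the role of the projective embedding throughout.
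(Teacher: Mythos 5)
Your architecture has a genuine gap at the very first link of the chain: the implication $(2)\Rightarrow(1)$ is \emph{not} immediate from the definitions. Very ampleness of $\E$ means that $\O_{\P(\E)}(1)$ is very ample on $\P(\E)$, i.e.\ that $H^0(\P(\E),\O_{\P(\E)}(1))\to H^0(W,\O_{\P(\E)}(1)_{|W})$ is surjective onto a $2$-dimensional target for length-$2$ subschemes $W\subset\P(\E)$, whereas $1$-very ampleness of $\E$ asks for surjectivity of $H^0(X,\E)\to H^0(Z,\E_{|Z})$ onto a $2\rk(\E)$-dimensional target for every length-$2$ subscheme $Z\subset X.$ For reduced $Z=\{x,y\}$ one can recover the latter from separation of points on $\P(\E)$ by a linear-span argument on the disjoint fibres $\P(\E(x)),\P(\E(y)),$ but for a non-reduced $Z$ supported at $x$ with tangent direction $v$ one needs $H^0(X,\E\o\m_x)\to\E(x),$ $s\mapsto D_vs(x),$ to be surjective, and separation of tangent vectors on $\P(\E)$ only controls one hyperplane of $\E(x)$ at a time. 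This is precisely why the paper (following \cite{lopez2023geometrical}) never deduces $(1)$ from $(2)$: it proves $(4)\Rightarrow(1)$ and $(4)\Rightarrow(2)$ in parallel, both resting on the surjectivity of $H^0(X,\E)\to H^0(Z,\E_{|Z})$ for length-$2$ subschemes $Z\subset X$ (Lemma \ref{lem:3.2}), plus, for $(2),$ the easy extra case of length-$2$ subschemes of $\P(\E)$ contained in a fibre $\P(\E(x)),$ handled via the diagram (\ref{eq:diagram-projective}). Note that your reduction of very ampleness to ``surjectivity for every length-two $Z\subset X$'' is in fact the definition of $1$-very ampleness, not of very ampleness; it silently skips the fibre-contained subschemes of $\P(\E),$ and it shows that the separation statement you actually plan to prove is $(4)\Rightarrow(1).$ If you prove that statement, the cycle closes correctly as $(1)\Rightarrow(3)\Leftrightarrow(4)\Rightarrow(1)$ together with $(2)\Rightarrow(3)\Leftrightarrow(4)\Rightarrow(2),$ with no need for the unjustified shortcut $(2)\Rightarrow(1).$

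On the hard implication your strategy is close in spirit to the paper's but differs in where the Koszul/linear-section argument is run. You propose to pull back the resolution of $\I_{\Lambda/\P^N}$ to $X$ along $\phi_V$ and chase cohomology there; the paper instead pushes forward: since $\phi_V$ is finite and étale onto its smooth schematic image $\bX,$ the sheaf $(\phi_V)_\ast\E$ is Ulrich on $(\bX,\O_{\bX}(1))$ (Lemma \ref{lem:claim}(vii)), the separation lemmas of \cite{lopez2023geometrical} apply verbatim on the embedded variety $\bX,$ and affine base change along the relevant cartesian squares transfers the surjectivity back to $X$ (Lemmas \ref{lem:restriction}--\ref{lem:3.2.2}). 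The pushforward route avoids having to justify exactness of a pulled-back Koszul complex on the non-embedded $X,$ and it packages your ``degenerate'' case ($Z$ contained in a fibre $\phi_V^{-1}(\phi_V^{\hspace{0.000001mm}}(x)),$ necessarily reduced by unramifiedness) as global generation of the pushforward (Remark \ref{rmk:restriction-fibre}); your sketch of that case is the right idea but needs this to be made precise. Finally, the fact that the preimage of a line $L\subset\bX$ is globally, not just locally, a disjoint union of $B$-lines uses the simple connectedness of $\P^1,$ which should be stated.
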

	
	\subsection*{Acknowledgments} I am deeply in debt with Angelo Felice Lopez for his continuous support. 
	
	\section{Notations}
	Throughout the paper we will adopt the following conventions:
	\begin{itemize}
		\setlength\itemsep{0em}
		\item All schemes are separated and of finite type  over the field of complex numbers $\C.$
		\item A variety is an integral scheme and subvarieties are always closed. 
		\item A point $x\in X$ is always closed unless otherwise specified and its ideal sheaf is denoted by $\m_x.$ The ideal sheaf of a closed subscheme $Y\subset X$ is denoted by $\I_{Y/X}.$
		\item Given a closed subscheme $Y\subset X,$ by \emph{“blow-up of $X$ along $Y$”} we will mean the blow-up of $X$ along the ideal sheaf $\I_{Y/X}.$
		\item Given a line bundle $L,$ the morphism induced by a linear series $|V|$ for a non-zero vector space $V\subset H^0(X,L)$ is denoted by $\phi_V\colon X\to\P^N.$ We refer to $\Bs(|V|)$ as the \emph{base locus} of $|V|$ or as the \emph{base scheme} of $|V|$ if we want to emphasize the scheme structure induced by the \emph{base ideal} $\mathfrak{b}(|V|)=\Imm(V\o L^\ast\to\O_X).$ 
		\item Given a line bundle $L,$ we write $\F(pL)=\F\o L^{\o p}$ for any sheaf $\F$ and any $p\in\Z.$
		\item A line bundle $L$ is \emph{strictly nef} if $L\cdot C>0$ for every irreducible curve $C.$
		\item The ramification locus of a finite morphism $f$ is denoted by $\Ram(f).$
	\end{itemize}
	
	\section{Preliminary definitions and generalities on Ulrich vector bundles}\label{sect:ulrich}
	In this section we collect some definitions and useful facts.
	
	\begin{defi}
		A vector bundle $\F$ on a projective variety $X$ is \emph{nef} (resp. \emph{big, ample, very ample}) if the line bundle $\O_{\P(\F)}(1)$ is nef (resp. big, ample, very ample) on the projectivized bundle $\P(\F).$ Given an integer $k\geq0,$ we say that $\F$ is \emph{$k$-very ample} if the restriction map $$H^0(X,\F)\to H^0(Z,\F_{|Z})$$ is surjective for every $0$-dimensional closed subscheme $Z\subset X$ of length $k+1.$ 
	\end{defi}
	
	\begin{rmk}\label{rmk:k-very}
		Observe that a vector bundle is $0$-very ample if and only if it is globally generated. For line bundles, $1$-very ampleness is equivalent to very ampleness. In general, if $k>0$ and $\F$ is a $k$-very ample vector bundle on a (smooth) projective variety, then $\F$ is ample (and $\det(\F)$ is very ample) \cite[Remark 1.3 \& Lemma 1.4]{ballico1992higher}.
	\end{rmk}
	
	\begin{defi}\label{def:augmented}
		Let $\F$ be a vector bundle on a projective variety $X.$ The \emph{base locus} of $\F$ is the set $$\Bs(\F)=\left\{x\in X\ |\ H^0(X,\F)\to\F(x)\ \text{is not surjective}\right\}.$$ The \emph{stable base locus} of $\F$ is defined as \[
		\B(\F)=\bigcap_{k\geq1}\Bs(\Sym^k\F).
		\] The \emph{augmented} (resp. \emph{restricted}) \emph{base locus} of $\F$ is $$\B_+(\F)=\bigcap_{k\geq1}\B((\Sym^k\F)(-A))\hspace{1cm} \ \left(\mathrm{resp.}\ \B_-(\F)=\bigcup_{k\geq1}\B((\Sym^k\F)(A))\right),$$ where $A$ is an ample line bundle on $X.$
	\end{defi}
	
	\begin{rmk}\label{rmk:augmented}
		One can prove that the definition of augmented (and restricted) base locus of a vector bundle $\F$ does not depend on the choice of the ample line bundle. Moreover, if $\pi\colon\P(\F)\to X$ denotes the natural projection, we have $\pi(\B_+(\O_{\P(\F)}(1)))=\B_+(\F)$ \cite[Proposition 6.4]{fulger2021seshadri}.
	\end{rmk}
	
	\begin{defi}\label{def:V-big}
		A vector bundle $\F$ on a projective variety $X$ is said \emph{V-big} if there exist an ample line bundle $A$ and a positive integer $k>0$ such that $\b{\B_-((\Sym^k\F)(-A))}\neq X.$
	\end{defi}
	
	\begin{thm}[\textbf{\cite[Theorem 6.4]{bauer2015positivity}}]\label{thm:V-big}
		Let $X$ be a smooth projective variety and let $\F$ be a vector bundle on $X.$ Then $\F$ is V-big if and only if $\B_+(\F)\neq X.$
	\end{thm}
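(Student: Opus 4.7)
My approach is to reduce the statement to the classical characterization of bigness for line bundles by passing to the projective bundle $\pi\colon\P(\F)\to X$. Specifically, I would use the following line bundle fact: for a line bundle $L$ on a smooth projective variety $Y$, the conditions (a) $L$ big; (b) $\B_+(L)\neq Y$; (c) $\overline{\B_-(kL\otimes A^{-1})}\neq Y$ for some integer $k>0$ and some ample line bundle $A$ on $Y$, are all equivalent. The equivalence (a)$\iff$(b) is classical, and (a)$\iff$(c) follows from the standard characterization ``$\B_-(M)\neq Y$ iff $M$ is pseudoeffective'' on smooth varieties, combined with the observation that ``$kL\otimes A^{-1}$ pseudoeffective for some $k$ and ample $A$'' is precisely ``$L$ lies in the interior of the pseudoeffective cone''.

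For the direction ``$\B_+(\F)\neq X\Rightarrow \F$ V-big'' the reduction to $\P(\F)$ is not needed. Indeed, by definition of $\B_+(\F)$ there exists $k$ with $\B((\Sym^k\F)(-A))\neq X$, and the standard inclusion $\B_-(\g)\subseteq \B(\g)$ for any vector bundle $\g$ (proved by unwinding the definitions, using that $A^{\otimes n}$ is globally generated for $n\gg 0$ and that $\Sym^{nk}\g$ is a direct summand of $\Sym^n\Sym^k\g$ in characteristic zero) yields
\[
\overline{\B_-((\Sym^k\F)(-A))}\subseteq \B((\Sym^k\F)(-A))\subsetneq X,
\]
so $\F$ is V-big.

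For the converse, the plan is to translate V-bigness of $\F$ into bigness of $\O_{\P(\F)}(1)$. By Remark \ref{rmk:augmented}, $\pi(\B_+(\O_{\P(\F)}(1)))=\B_+(\F)$, and since $\pi$ is surjective, $\B_+(\F)\neq X$ is equivalent to $\O_{\P(\F)}(1)$ being big. The key step is therefore to establish the analogous compatibility for the restricted base locus, namely
\[
\pi\bigl(\B_-(\O_{\P(\F)}(k)\otimes \pi^*A^{-1})\bigr)=\B_-\bigl((\Sym^k\F)(-A)\bigr),
\]
for every $k\geq 1$ and every ample line bundle $A$ on $X$. Granting this, V-bigness of $\F$ translates into $\overline{\B_-(\O_{\P(\F)}(k)\otimes \pi^*A^{-1})}\neq \P(\F)$ for some $k,A$; replacing $\pi^*A$ by the genuinely ample line bundle $\O_{\P(\F)}(1)\otimes \pi^*A^{\otimes m}$ on $\P(\F)$ for $m\gg 0$ (necessary because $\pi^*A$ alone is only fibrewise trivial) and applying the line bundle criterion (a)$\iff$(c), one concludes that $\O_{\P(\F)}(1)$ is big, whence $\B_+(\F)\neq X$.

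The main obstacle is precisely the displayed $\B_-$-compatibility. It is strictly more delicate than its $\B_+$-analogue in Remark \ref{rmk:augmented} for two related reasons: first, the restricted base locus is a (typically non-closed) countable union of stable base loci, so the ample twist appearing in its definition has to be handled carefully when moving between $X$ and $\P(\F)$; second, since $\Sym^n(\Sym^k\F)$ is not equal to $\Sym^{nk}\F$ but only admits it as a direct summand, the identification via $\pi_*\O_{\P(\F)}(k)=\Sym^k\F$ must be combined with the independence of $\B_-$ from the choice of ample twist in order to control the various stable base loci uniformly on total and base space of $\pi$.
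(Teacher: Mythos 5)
First, note that the paper does not prove this statement at all: it is quoted verbatim from \cite[Theorem 6.4]{bauer2015positivity}, so your proposal has to be judged on its own terms rather than against an argument in the text. Your direction ``$\B_+(\F)\neq X\Rightarrow\F$ V-big'' is sound: it rests on the inclusion $\B_-(\g)\subseteq\B(\g)$ and on the closedness of $\B(\g)$ (an intersection of the closed sets $\Bs(\Sym^j\g)$), and your sketch of why the inclusion holds is the right one; both facts are in \cite{bauer2015positivity}.

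The converse direction, however, contains a genuine error, and it is not the technical point you flag as the ``main obstacle.'' You assert that, since $\pi(\B_+(\O_{\P(\F)}(1)))=\B_+(\F)$ and $\pi$ is surjective, the condition $\B_+(\F)\neq X$ is \emph{equivalent} to $\O_{\P(\F)}(1)$ being big. Only one implication is true: $\B_+(\F)\neq X$ forces $\B_+(\O_{\P(\F)}(1))\neq\P(\F)$. The converse fails because $\pi$ can map a proper closed subset of $\P(\F)$ onto all of $X$; the equality $\pi(\B_+(\O_{\P(\F)}(1)))=\B_+(\F)$ only says that $x\in\B_+(\F)$ iff the fibre $\P(\F(x))$ \emph{meets} $\B_+(\O_{\P(\F)}(1))$. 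Bigness of $\O_{\P(\F)}(1)$ is precisely bigness of $\F$, and big is strictly weaker than V-big in rank $\geq 2$ (Remark \ref{rmk:V-big}); the spinor bundles on quadrics $Q_n$, $n\geq 7$, discussed in the paper are big (so $\O_{\P(\mc S)}(1)$ is big) yet have $\B_+(\mc S)=Q_n$. So even after you establish the $\B_-$-compatibility you identify, your chain of implications terminates at ``$\O_{\P(\F)}(1)$ is big,'' which does not yield $\B_+(\F)\neq X$. The correct argument must use V-bigness fibrewise: pick $x$ outside the closed set $\overline{\B_-((\Sym^k\F)(-A))}$ and show that the \emph{entire} fibre $\P(\F(x))$ avoids $\B_+(\xi)$, $\xi=\O_{\P(\F)}(1)$. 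Concretely, unwinding the definitions (and using that $\Sym^{jmk}\F$ is a direct summand of $\Sym^j\Sym^m\Sym^k\F$) gives, for each $m\geq 2$, some $j$ with $(\Sym^{jmk}\F)(-j(m-1)A)$ globally generated at $x$, hence $\P(\F(x))\cap\B(mk\xi-(m-1)\pi^\ast A)=\emptyset$; on the other hand $\B_+(\xi)=\B(\xi-\delta H)$ for $H=\xi+m_0\pi^\ast A$ ample and $0<\delta\ll 1$, which one checks is contained in $\B(mk\xi-(m-1)\pi^\ast A)$, so $x\notin\pi(\B_+(\xi))=\B_+(\F)$. That fibrewise step is the real content of the theorem and is absent from your plan.
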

	
	\begin{rmk}\label{rmk:V-big} 
		On smooth projective varieties, a V-big vector bundle is big \cite[Corollary 6.5]{bauer2015positivity}. The converse is true for line bundles \cite[Proposition-Definition 4.2]{bauer2015positivity}, but it is typically false for higher ranks (see for instance \cite[Remark 6.6]{bauer2015positivity}).
	\end{rmk}
	
	We now come to some generalities on Ulrich vector bundles.
	
	Just like in \cite[Theorem 2.3]{beauville2018introduction}, one can prove the following characterization of $B$-Ulrich bundles.
	
	\begin{thm}\label{thm:def-ulrich}
		Let $X$ be a smooth projective variety of dimension $n\geq1$ and let $B$ be a globally generated ample line bundle. Let $\E$ be a vector bundle of rank $r.$
		The following conditions are equivalent:
		\begin{itemize}
			\item[(1)] $\E$ is $B$-Ulrich.
			\item[(2)] There exists a linear resolution 
			\[
			\xymatrix{0 \ar[r] & L_c \ar[r] & L_{c-1}\ar[r] &\cdots\ar[r] & L_0\ar[r] & \phi_* \E\ar[r] &0},
			\]
			where $\phi=\phi_B\colon X\rightarrow\P^N,$ and $L_i=\O_{\P^{N}}(-i)^{\oplus b_i}$ and $c=N-n.$
			\item[(3)] For all finite surjective morphism $\pi\colon X\rightarrow \P^n$ such that $\pi^\ast\O_{\P^n}(1)\cong B,$ the sheaf $\pi_\ast \E$ is isomorphic to $\O_{\P^n}^{\oplus rd},$ where $d=B^n.$
		\end{itemize}
	\end{thm}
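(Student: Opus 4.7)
The plan is to mirror the proof of \cite[Theorem 2.3]{beauville2018introduction}, observing that the only geometric input used there is that the polarization map to $\P^N$ is a \emph{finite} morphism, not that it is a closed embedding. Since $B$ is ample and globally generated, both $\phi=\phi_B\colon X\to\P^N$ and every surjective $\pi\colon X\to\P^n$ with $\pi^\ast\O_{\P^n}(1)\cong B$ are finite: on any positive-dimensional fiber, $B$ would be simultaneously ample and trivial. Moreover such a $\pi$ is flat by miracle flatness ($X$ Cohen--Macaulay, $\P^n$ regular, $\pi$ finite), so $\pi_\ast\E$ is locally free of rank $rd$. Projection formula then yields
\[
H^i(X,\E(jB)) \;=\; H^i(\P^N,\phi_\ast\E(j)) \;=\; H^i(\P^n,\pi_\ast\E(j))
\]
for all $i,j$, turning the $B$-Ulrich condition into a pure cohomology vanishing condition on the pushforward sheaves on $\P^N$ and $\P^n$.

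For $(1)\Leftrightarrow(3)$ the identification above shows that $\E$ is $B$-Ulrich if and only if $\pi_\ast\E$ is Ulrich on $\P^n$ with respect to $\O_{\P^n}(1)$. It then suffices to prove the classical fact that an Ulrich bundle on $\P^n$ is trivial of the correct rank: the Ulrich vanishings imply (via a Beilinson argument on $\P^n$, which upgrades them to the vanishing of intermediate cohomology for all twists) that $\pi_\ast\E$ splits as $\bigoplus_j\O_{\P^n}(a_j)$ by Horrocks; the conditions $H^0(\pi_\ast\E(-1))=0$ and $H^n(\pi_\ast\E(-n))=0$ force every $a_j=0$, and the rank count yields $rd$ summands. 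The reverse implication $(3)\Rightarrow(1)$ is an immediate application of Bott vanishing.

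For $(1)\Leftrightarrow(2)$ I would apply the Beilinson spectral sequence
\[
E_1^{p,q} \;=\; H^q\bigl(\P^N,\,\phi_\ast\E\otimes\Omega^{-p}_{\P^N}(-p)\bigr)\otimes\O_{\P^N}(p) \;\Longrightarrow\; \delta_{p+q,0}\,\phi_\ast\E
\]
to $\phi_\ast\E$ on $\P^N$. Each $E_1^{p,q}$ reduces, via the Koszul resolution of $\Omega^{-p}(-p)$ and Bott's formulas, to cohomology groups $H^q(\P^N,\phi_\ast\E(-p-s))$ with $0\leq s\leq -p$. The $B$-Ulrich vanishings kill exactly the slots with $p+q\neq 0$ and with $p\notin[-c,0]$ where $c=N-n$, so the sequence degenerates into a linear resolution $0\to L_c\to\cdots\to L_0\to\phi_\ast\E\to 0$ with $L_i=\O_{\P^N}(-i)^{\oplus b_i}$. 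The length $c=N-n$ is also dictated by Auslander--Buchsbaum, since $\phi_\ast\E$ is Cohen--Macaulay of dimension $n$ on $\P^N$. Conversely, given such a resolution, twisting by $\O_{\P^N}(-p)$ for $1\leq p\leq n$ and chasing the hypercohomology spectral sequence yields $H^i(\P^N,\phi_\ast\E(-p))=0$ for all $i\geq 0$.

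The main technical hurdle is the Beilinson bookkeeping in $(1)\Leftrightarrow(2)$, but this is entirely parallel to the very ample case: the only sheaf entering the spectral sequence is $\phi_\ast\E$, a coherent sheaf on $\P^N$ whose cohomology is controlled by the Ulrich hypothesis via projection formula, and the fact that $\phi$ is merely finite rather than an embedding is invisible at this level. The same observation underlies the remark that all basic properties of the classical Ulrich bundles continue to hold in the present, more general, setting.
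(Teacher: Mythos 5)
Your proposal is correct and is essentially the paper's own argument: the paper gives no written proof of this theorem, stating only that it is proved ``just like in [Be4, Theorem 2.3],'' and your writeup is precisely that adaptation, correctly identifying that Beauville's argument only uses finiteness of the polarization morphism (plus miracle flatness for $\pi$ and the projection formula for affine morphisms), all of which survive when very ampleness is weakened to ample and globally generated. The only points left implicit are minor and standard: the existence of at least one finite $\pi\colon X\to\P^n$ with $\pi^\ast\O_{\P^n}(1)\cong B$ (a general linear projection of $\phi_B$), and the fact that the cleanest route to ``Ulrich on $\P^n$ implies trivial'' is the Beilinson monad directly rather than Horrocks.
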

	
	The following properties are well known for Ulrich bundles with respect to a very ample polarization. They continue to hold in this slightly wider setting and are proven in the same way (possibly with some minor modification). 
	
	\begin{lemma}\label{lem:claim}
		Let $X$ be a smooth projective variety of dimension $n\geq 1$ and let $B$ a globally generated ample line bundle with $d=B^n.$ Suppose $X$ carries a $B$-Ulrich vector bundle $\E$ of rank $r.$ Then:
		\begin{itemize}\setlength\itemsep{0em}
			\item [(i)] $\E$ is $0$-regular (in the sense of Castelnuovo-Mumford), globally generated and arithmetically Cohen-Macaulay.
			\item [(ii)] $\chi(X,\E(mB))=\frac{rd}{n!}(m+1)\cdots(m+n)$ and $h^0(X,\E)=rd.$
			\item [(iii)] $c_1(\E)\cdot B^{n-1}=\frac{r}{2}\left((n+1)B^n+K_X\cdot B^{n-1} \right).$
			\item [(iv)] If $n\geq 2,$ then $$c_2(\E)\cdot B^{n-2}=\frac{1}{2}\left[c_1(\E)^2-c_1(\E)\cdot K_X\right]\cdot B^{n-2}+\frac{r}{12}\left[K_X^2+c_2(X)-\frac{3n^2+5n+2}{2}B^2\right]\cdot B^{n-2}.$$
			\item [(v)] If $Y\subset X$ is a smooth irreducible member in $|B|,$ then $\E_{|Y}$ is a $B_{|Y}$-Ulrich bundle on $Y.$ 
			\item[(vi)] There exists a $B^{\o k}$-Ulrich vector bundle of rank $r\cdot n!$ for all $k\geq 1.$
			\item[(vii)] If $f\colon X\to Z$ is a finite surjective morphism of smooth projective varieties such that $B\cong f^\ast L$ for a globally generated ample line bundle $L$ on $Z,$ then $f_\ast\E$ is a $L$-Ulrich bundle on $Z.$
			\item [(viii)] $\E$ is semistable and if it is not stable, then $\E$ is extension of $B$-Ulrich vector bundles of smaller rank. Moreover	$\E$ is stable if and only if it is $\mu$-stable. 
			\item[(ix)] $\O_X(kB)$ is $B$-Ulrich if and only if $(X,B,k)=(\P^n,\OPn(1),0).$
		\end{itemize}
	\end{lemma}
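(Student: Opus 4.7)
The plan is to derive most parts of Lemma \ref{lem:claim} from the characterization in Theorem \ref{thm:def-ulrich}(3), combined with direct cohomological manipulations of the Ulrich vanishings. Since $B$ is globally generated and ample, composing $\phi_B$ with a general linear projection produces a finite surjective morphism $\pi\colon X\to\P^n$ with $\pi^\ast\O_{\P^n}(1)\cong B$, and then $\pi_\ast\E\cong\O_{\P^n}^{\oplus rd}$ by Theorem \ref{thm:def-ulrich}(3).

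Parts (i), (ii), (iii), (iv) and (vii) follow directly. For (i), the $0$-regularity with respect to $B$ is the Ulrich vanishing in the range $1\le i\le n$ and holds for $i>n$ by dimension reasons, and global generation then follows from Mumford's regularity theorem (which applies to globally generated polarizations). The arithmetic Cohen--Macaulay property and the formulas in (ii) both flow from the projection formula: $H^i(X,\E(mB))\cong H^i(\P^n,\O_{\P^n}(m))^{\oplus rd}$, which pins down $\chi$, $h^0$ and the intermediate vanishings for every $m\in\Z$. For (iii) and (iv), one equates the closed form $\chi(X,\E(mB))=\frac{rd}{n!}(m+1)\cdots(m+n)$ of (ii) with the Hirzebruch--Riemann--Roch expansion of the same Euler characteristic as a polynomial in $m$, and reads off the desired identities by matching the coefficients of $m^{n-1}$ and $m^{n-2}$ respectively. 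Part (vii) is immediate: finiteness of $f$ between smooth varieties forces flatness by miracle flatness, so $f_\ast\E$ is locally free, and the projection formula yields $H^i(Z,f_\ast\E(-pL))\cong H^i(X,\E(-pB))=0$ in the Ulrich range $1\le p\le n$.

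Parts (v), (viii) and (ix) are more direct. For (v), twist the sequence $0\to\O_X(-B)\to\O_X\to\O_Y\to0$ by $\E(-pB)$ and chase cohomology: for $1\le p\le n-1=\dim Y$ both neighbouring groups $H^i(X,\E(-(p+1)B))$ and $H^i(X,\E(-pB))$ vanish by the Ulrich hypothesis on $\E$, forcing the required vanishings on $Y$; note that $B_{|Y}$ is globally generated and ample. For (viii), $\mu_B(\cdot)$ is determined by the rank on $B$-Ulrich bundles thanks to (iii), so by a standard argument (adapted from \cite{beauville2018introduction}, using that $\pi_\ast\E$ is trivial and hence polystable) any destabilizing saturated subsheaf of $\E$ must itself be $B$-Ulrich, yielding $\mu$-equality and realising $\E$ as an extension of lower rank $B$-Ulrich bundles; the equivalence of semistability with $\mu$-semistability and of stability with $\mu$-stability for such bundles is classical. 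For (ix), the Ulrich vanishing $h^0(X,\O_X((k-1)B))=0$ forces $k\le 0$, while $h^0(X,\O_X(kB))=d>0$ from (ii) forces $k\ge 0$, so $k=0$; then $\pi_\ast\O_X\cong\O_{\P^n}^{\oplus d}$, and integrality of $X$ gives $d=h^0(X,\O_X)=1$, so $\pi$ is an isomorphism and $(X,B)\cong(\P^n,\O_{\P^n}(1))$.

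The most delicate part is (vi). The plan is to compose $\pi$ with a finite surjective morphism $v_k\colon\P^n\to\P^n$ of degree $k^n$ satisfying $v_k^\ast\O_{\P^n}(1)\cong\O_{\P^n}(k)$, so that $\psi:=v_k\circ\pi$ satisfies $\psi^\ast\O_{\P^n}(1)\cong B^{\o k}$. Choose an $\O_{\P^n}(k)$-Ulrich bundle $G$ on $\P^n$ of rank $n!$ (whose existence is classical, going back to the constructions in \cite{eisenbud2003resultants}) and set $\F=\E\o\pi^\ast G$. By the projection formula together with Theorem \ref{thm:def-ulrich}(3) applied first to $\pi$ and then to $v_k$, one computes
\[
\psi_\ast\F=(v_k)_\ast(G\o\pi_\ast\E)=((v_k)_\ast G)^{\oplus rd}\cong\O_{\P^n}^{\oplus rn!\cdot k^nd}=\O_{\P^n}^{\oplus rn!(B^{\o k})^n},
\]
whence $\F$ is $B^{\o k}$-Ulrich of rank $rn!$ by Theorem \ref{thm:def-ulrich}(3). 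The main obstacle is producing the auxiliary bundle $G$ of the prescribed rank on $(\P^n,\O_{\P^n}(k))$; once that ingredient is granted, the rest of the argument is purely formal.
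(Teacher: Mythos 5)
Your proof is correct and takes essentially the same route as the paper, whose proof consists of pointing to the standard references: the arguments you write out (Noether normalization by a general projection, triviality of $\pi_\ast\E$ plus the projection formula for (i), (ii), (vii), coefficient matching in Hirzebruch--Riemann--Roch for (iii), (iv), the Koszul twist for (v), and for (vi) tensoring with the pullback of a rank-$n!$ Ulrich bundle on $(\P^n,\O_{\P^n}(k))$ via the power map) are exactly those of the cited sources, adapted verbatim to the globally generated setting. The only compressed step is (viii), where the slope comparison between subsheaves of $\E$ and subsheaves of $\pi_\ast\E\cong\O_{\P^n}^{\oplus rd}$ deserves a word, but this is the classical Casanellas--Hartshorne--Geiss--Schreyer argument and the paper gives no more detail either.
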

	\begin{proof}
		For (i)-(v) see (the proof of) \cite[Lemma 3.2]{lopez2023varieties} and references therein. For (vi)-(viii) see \cite[(3.2)-(3.6) \& Corollary 3.2]{beauville2018introduction} and \cite[Theorem 2.9]{casanellas2012stable}. For (ix) see \cite[Lemma 4.2(vi)]{antonelli2023varieties}.
	\end{proof}
	
	We conclude this section with a couple of examples of Ulrich vector bundles that will be useful later.
	
	First we recall the following definitions.
	
	\begin{defi}
		Let $X$ be a smooth projective variety and let $B$ a globally generated ample line bundle on $X.$ A vector bundle $\E$ of rank $2$ is said to be \emph{special with respect to $B$} if $\det(\E)\cong K_X((n+1)B).$
	\end{defi}
	
	\begin{defi}\label{def:DP}
		A \emph{Del Pezzo manifold of degree $d$} is a polarized smooth projective variety $(X,B)$ of dimension $n\geq 2$ such that $K_X=-(n-1)B$ and $B^n=d.$ When $n=2,$ we set $(X,B)=(S,-K_S).$ These varieties are classified in \cite[Theorem 3.3.1]{iskovskikh1999fano}: if $d\geq 3,$ the polarization $B$ determines an embedding in $\P^{d+n-2};$ for $d=2,$ then $\phi_B\colon X\rightarrow\P^n$ is a double cover branched over a hypersurface of degree $4;$ finally, $B$ has a single base point if $d=1$ \cite[Proposition 3.2.4]{iskovskikh1999fano}.
	\end{defi}
	
	In \cite[Propositions 4.1(i)-6.1]{beauville2018introduction} the author proves that all Del Pezzo surfaces and threefolds $(X,B)$ of degree $d\geq3$ carry an Ulrich vector bundle for the hyperplane section $\O_X(B).$ We extend these results to those of degree $d=2$ when the polarization $B$ is simply base-point-free.
	
	\begin{prop}\label{prop:ulrich-DP-surface}		
		Let $S$ be a Del Pezzo surface of degree $2,$ and let $L$ be a line bundle such that $L^2=-2$ and $L\cdot K_S=0.$ Then $\E=L(-K_S)$ is a $(-K_S)$-Ulrich line bundle on $S.$ Since $S$ is isomorphic to the blow-up of $\P^2$ at seven points in general positions, we can take $L$ as the difference of two distinct exceptional divisors.
	\end{prop}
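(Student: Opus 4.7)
The plan is to unravel the Ulrich conditions using Serre duality and the identification $B=-K_S$, reducing everything to cohomology vanishings for $L$ and $L^{-1}$, which can then be handled by Riemann-Roch together with the ampleness of $-K_S$.

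First, writing out the $B$-Ulrich condition with $n=2$ and $B=-K_S$, we must verify that $H^{i}(S,\E(-pB))=0$ for $i\geq 0$ and $p=1,2$. Since
\[
\E(-B)=L(-K_S)\otimes K_S=L,\qquad \E(-2B)=L(-K_S)\otimes 2K_S = L(K_S),
\]
and Serre duality gives $H^{i}(S,L(K_S))\cong H^{2-i}(S,L^{-1})^{\vee}$, it suffices to prove
\[
H^{i}(S,L)=H^{i}(S,L^{-1})=0 \quad \text{for } i=0,1,2.
\]

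Next I would compute the Euler characteristics via Riemann-Roch on the surface $S$. Using $L\cdot K_S=0$ and $L^2=-2$, together with $\chi(\O_S)=1$ (since $S$ is rational), one finds
\[
\chi(L)=1+\tfrac{1}{2}(L^{2}-L\cdot K_S)=0,\qquad \chi(L^{-1})=1+\tfrac{1}{2}(L^{2}+L\cdot K_S)=0.
\]
So once the $H^0$ and $H^2$ vanish, the $H^1$ vanish automatically. For $H^0$: any non-zero section of $L$ (resp.\ $L^{-1}$) defines an effective divisor $D$ numerically equivalent to $L$ (resp.\ $-L$), but $D\cdot(-K_S)=\pm L\cdot(-K_S)=0$, contradicting the ampleness of $-K_S$ unless $D=0$; but $\O_S\not\cong L^{\pm 1}$ because $L^2=-2\neq 0$. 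Hence $H^{0}(S,L)=H^{0}(S,L^{-1})=0$. For $H^2$ we apply Serre duality: $H^{2}(S,L)\cong H^{0}(S,L^{-1}(K_S))^{\vee}$, and $L^{-1}(K_S)\cdot(-K_S)=-L\cdot(-K_S)-K_S^{2}=-2<0$ kills any global section; similarly for $H^{2}(S,L^{-1})$. This finishes the vanishings.

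For the concrete realization when $S=\mathrm{Bl}_{p_1,\ldots,p_7}\P^{2}$ in general position, with basis $H,E_1,\ldots,E_7$ of $\Pic(S)$ and $K_S=-3H+\sum E_i$, choose $L=E_i-E_j$ with $i\neq j$: a direct computation gives $L^{2}=E_i^{2}+E_j^{2}=-2$ and $L\cdot K_S=E_i^{2}-E_j^{2}=0$, as required. The main (minor) obstacle is just keeping track of signs when invoking Serre duality with $B=-K_S$; once the translation is done, the vanishings are forced by the numerical data and the ampleness of $-K_S$.
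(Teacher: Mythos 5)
Your proof is correct and follows essentially the same route as the paper, which simply defers to Beauville's Proposition 4.1(i) after noting that the verification only uses the ampleness (not the very ampleness) of $-K_S$; that cited argument is exactly your reduction via Serre duality to the vanishing of all cohomology of $L$ and $L^{-1}$, settled by Riemann--Roch together with the fact that a nonzero effective divisor has positive degree against the ample $-K_S$.
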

	\begin{proof}
		This is a check on the vanishing which does not involve the very ampleness of $-K_S.$ Hence we refer to \cite[Proposition 4.1(i)]{beauville2018introduction}.
	\end{proof}
	
	\begin{prop}\label{prop:ulrich-DP-threefold}
		A Del Pezzo threefold $(X,B)$ of degree $2$ carries a stable special $B$-Ulrich vector bundle of rank $2.$
	\end{prop}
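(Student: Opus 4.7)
I would construct $\E$ via the Hartshorne-Serre correspondence, in the spirit of \cite[Proposition 6.1]{beauville2018introduction} where the analogous statement is proven for degree $d\geq 3$. Specialness of a rank $2$ bundle $\E$ forces $\det\E = K_X + 4B = 2B$ (using $K_X = -2B$), and combining Lemma \ref{lem:claim}(iii)-(iv) with $n=3$, $r=2$, $d=2$ together with the Fano identity $1 = \chi(\O_X) = \tfrac{1}{24}(-K_X)\cdot c_2(X)$ (which gives $c_2(X)\cdot B = 12$) yields $c_1(\E)\cdot B^2 = 4$ and $c_2(\E)\cdot B = 4$. The Hartshorne-Serre correspondence then predicts $\E$ as the middle term of an extension
$$0 \to \O_X \to \E \to \I_C(2B) \to 0$$
arising from a locally complete intersection curve $C\subset X$ with $B\cdot C = 4$ and canonical bundle $\omega_C = (K_X+\det\E)|_C = \O_C$, that is, a smooth quartic elliptic curve.

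Next, I would verify the Ulrich conditions $H^i(\E(-pB))=0$ for $1\leq p\leq 3$ from the defining extension. The rank-$2$ identification $\E^\vee\cong \E(-2B)$ combined with Serre duality makes $p=3$ dual to $p=1$ and $p=2$ self-dual; Kodaira vanishing on the Fano threefold $X$ kills all cohomology of $\O_X(-pB)$ save for $H^3(\O_X(-2B))\cong\C$. Chasing the resulting long exact sequences reduces the Ulrich vanishings to two conditions on $C$: (a) the restriction map $H^0(X,B)\to H^0(C,B|_C)$ is an isomorphism of $4$-dimensional vector spaces -- both sides indeed have dimension $4$, the source from the double cover $\phi_B\colon X\to\P^3$ and the target from Riemann--Roch on the elliptic curve $C$ with $\deg B|_C = 4$; and (b) the connecting map $H^2(\I_C)=H^1(\O_C)\to H^3(\O_X(-2B))$ between one-dimensional spaces is nonzero, which is exactly the Hartshorne-Serre local-freeness condition on the extension class $\xi\in\Ext^1(\I_C(2B),\O_X)$ and is automatic once $\xi$ yields a locally free $\E$.

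The principal obstacle is producing a smooth quartic elliptic curve $C\subset X$ satisfying condition (a), i.e., not contained in any divisor of $|B|$. I would attack this by a dimension count on the Hilbert scheme: at a smooth quartic elliptic curve the Hilbert scheme of $X$ has expected dimension $-K_X\cdot C = 8$, whereas such curves contained in a fixed $S\in|B|$ form a family of dimension at most $\chi(\O_S(C)|_C) = C^2 + 1 - g_C = 4$ (using $C^2=4$ on the Del Pezzo surface $S$ of degree $2$), so after letting $S$ vary in the three-dimensional linear system $|B|$ one obtains a proper sub-locus of dimension at most $7<8$. A general member of the Hilbert-scheme component therefore lies in no divisor of $|B|$, and both $H^0$ and $H^1$ of $\I_C(B)$ vanish by dimensional balance. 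Existence of at least one such curve can be secured by smoothing a nodal configuration -- for instance, two $B$-lines (obtained as preimages under $\phi_B$ of bitangents to the branch quartic surface) joined by a suitable rational curve -- via standard smoothing results on Fano threefolds. Once $C$ is constructed, Hartshorne-Serre gives $\E$, the previous paragraph verifies the Ulrich conditions, and stability follows from Lemma \ref{lem:claim}(viii)-(ix): any $\mu$-destabilizing sub-line-bundle would be a $B$-Ulrich line bundle on $X$, which is excluded by Lemma \ref{lem:claim}(ix) as soon as $\Pic(X) = \Z\cdot B$ (which holds for a general Del Pezzo threefold of degree $2$), and special members are handled by openness of stability in families, exactly as in the case $d\geq 3$ treated in \cite[Proposition 6.1]{beauville2018introduction}.
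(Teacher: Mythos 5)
Your overall strategy coincides with the paper's: both proofs run the Hartshorne--Serre correspondence on a smooth elliptic curve $C\subset X$ with $B\cdot C=4$ and $H^0(X,\I_{C/X}(B))=0$, follow \cite[Proposition 6.1]{beauville2018introduction} to verify the Ulrich vanishings, and deduce stability from Lemma \ref{lem:claim}(viii)--(ix) once $B$-Ulrich line bundles are excluded. Your numerology ($\det\E=2B$, $c_1(\E)\cdot B^2=4$, $c_2(\E)\cdot B=4$) and your reduction of the Ulrich conditions to the single statement that $C$ lies on no member of $|B|$ are correct and match what the paper does implicitly. The one place where you genuinely diverge is the existence of the curve: the paper simply cites \cite[Proof of Theorem D, Step 1]{faenzi2014even}, whereas you sketch a construction by a Hilbert-scheme dimension count plus smoothing of a nodal configuration. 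That sketch is where the real work lives and where your argument has a gap: you assert that a connected nodal degree-$4$ arithmetic-genus-$1$ configuration (two $B$-lines joined by a rational curve) exists and smooths inside $X$ ``via standard smoothing results,'' but you neither verify that such a configuration with the right dual graph exists on $X$ nor check the unobstructedness/$H^1(N)=0$ hypotheses those smoothing results require; this is exactly the content of Faenzi's Step 1, so as written you are re-deriving the cited input without supplying its proof. Your dimension count ($8$ versus $\leq 4+3=7$) for pushing $C$ off of $|B|$ is fine once a generically smooth $8$-dimensional component is known to be nonempty.

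A smaller point on stability: $\Pic(X)=\Z\cdot\O_X(B)$ holds for \emph{every} Del Pezzo threefold of degree $2$ (\cite[Remark 3.3.2(i)]{iskovskikh1999fano}), not just general ones, so your hedge is unnecessary; more importantly, your fallback via ``openness of stability in families'' runs in the wrong direction --- openness propagates stability from a special member to nearby ones, not from the general member to special ones --- so if the Picard computation really did fail for special $X$ that step would not rescue the argument. Also, the destabilizing subobject is not a priori Ulrich; the correct statement is Lemma \ref{lem:claim}(viii), that a strictly semistable $B$-Ulrich bundle of rank $2$ is an extension of $B$-Ulrich line bundles, which is then ruled out by (ix).
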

	\begin{proof}
		In \cite[Proof of Theorem D, Step 1]{faenzi2014even} it is shown that $X$ contains a smooth elliptic curve $\G$ of $B$-degree $4$ such that $H^0(X,\I_{\G/X}(B))=0.$ Using Hartshorne-Serre correspondence and proceeding as in \cite[Proposition 6.1]{beauville2018introduction}, one finds a rank $2$ special vector bundle $\E$ which is also $B$-Ulrich. Since $\Pic(X)=\Z\cdot\O_X(B)$ \cite[Remark 3.3.2(i)]{iskovskikh1999fano}, $X$ cannot support $B$-Ulrich line bundles by Lemma \ref{lem:claim}(ix). It follows that $\E$ must be stable by Lemma \ref{lem:claim}(viii).
	\end{proof}
	
	\section{General facts about Seshadri constants}\label{sect:seshadri}
	This section is devoted to recall the main properties of Seshadri constants. For a detailed account we refer to \cite{bauer2009primer}. Finally we will also prove a generalization of \cite[Lemma 7.1]{lopez2022positivity} which shows the existence of a Seshadri curve in a special case.
	
	\begin{defi}
		Let $X$ be a projective variety. Let $Y\subset X$ be a closed subscheme and let $L$ be a nef line bundle on $X.$ The \emph{Seshadri constant of $L$ at $Y$} is the non-negative real number 
		\[
		\eps(X,L;Y)=\eps(L;Y)=\sup\left\{\eps\geq0\ |\ \mu^\ast L-\eps E\ \mbox{is nef}\right\},
		\] where $\mu\colon\wX\to X$ is the blow-up at $X$ along $Y$ with exceptional divisor $E.$
		
		If $Y$ is supported at $q$ distinct points $x_1,\dots,x_q,$ then
		\[
		\eps(L;Y)=\eps(L;x_1,\dots,x_q)
		\] is called the \emph{multi-point Seshadri constant of $L$ at $x_1,\dots,x_q.$} The real number 
		\[
		\eps(L)=\eps(X,L):=\inf_{x\in X}\eps(X,L;x)
		\] is the \emph{Seshadri constant of $L.$}
	\end{defi}
	
	For a nef line bundle on a smooth projective variety, Nakamaye theorem leads to a characterization in terms of Seshadri constants of its augmented base locus.
	
	\begin{rmk}[\textbf{\cite[Remark 6.5]{ein2009restricted}}]\label{rmk:augmented-seshadri}
		Let $X$ be a smooth projective variety of dimension $n\geq1$ and let $L$ be a nef line bundle on $X.$ Then $$\B_+(L)=\left\{x\in X\ |\ \eps(L;x)=0\right\}.$$
	\end{rmk}
	
	\begin{prop}\label{prop:seshadri-properties}
		Let $X$ be a projective variety and let $L$ be a nef line bundle. Fix a point $x\in X.$ Then:
		\begin{itemize}\setlength\itemsep{0em}
			\item[(1)] $\eps(L;x)$ depends only on the numerical equivalence class of $L$ and satisfies $\eps(mL;x)=m\cdot\eps(L;x)$ for every $m\in\mathbf{N}.$
			\item[(2)] If $V\subset X$ is a subvariety of dimension $k\geq1$ passing through $x,$ then 
			\[
			\eps(L;x)\leq \left(\frac{(L^k\cdot V)}{\mult_x(V)}\right)^{\frac{1}{k}}.
			\] Moreover, equality holds for some $V,$ possibly equal to $X,$ passing through $x.$
			\item[(3)] If $L$ is ample and base-point-free, then $\eps(L;x)\geq1.$
			\item[(4)] The \emph{Seshadri function} $\eps(L;-)\colon X\to\R_+$ attains a maximal value for a very general point. We denote this value by $\eps(X,L;1)=\eps(L;1).$ 
		\end{itemize}
	\end{prop}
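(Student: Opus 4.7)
The plan is to assemble four standard results from the theory of Seshadri constants, for which the main reference is the primer \cite{bauer2009primer}; the proof will be short and will mostly consist of verifications from the definition together with citations for the less elementary parts. For (1), the statement is immediate: nefness of $\mu^\ast L-\eps E$ depends only on the numerical class of $\mu^\ast L$, so $\eps(L;x)$ is a numerical invariant of $L$, and the identity $\mu^\ast(mL)-m\eps E=m(\mu^\ast L-\eps E)$ combined with invariance of nefness under positive rescaling yields the scaling relation.

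For (2), I will pass to the blow-up $\mu\colon\wX\to X$ at $x$ and intersect the nef $\R$-class $\mu^\ast L-\eps E$ with the strict transform $\w V$ of $V$: using $\mu^\ast L\cdot E\equiv 0$ together with $E^k\cdot\w V=(-1)^{k-1}\mult_x(V)$ (since the projectivized tangent cone of $V$ at $x$ has degree $\mult_x V$ inside $E\cong\P^{n-1}$), only the two extreme terms of the binomial expansion of $(\mu^\ast L-\eps E)^k\cdot\w V\geq 0$ survive and one extracts $\eps^k\leq L^k\cdot V/\mult_x V$. The existence of a subvariety $V$ (possibly $V=X$) attaining equality is more delicate and I will quote it from \cite[Section 6]{bauer2009primer}.

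For (3), the claim $\eps(L;x)\geq 1$ reduces to $(\mu^\ast L-E)\cdot\gamma\geq 0$ for every irreducible curve $\gamma\subset\wX$. The cases $\gamma\cap E=\emptyset$ and $\gamma\subset E$ follow from nefness of $L$ and from $-E|_E\cong\O_{\P^{n-1}}(1)$ respectively; if $\gamma=\w C$ is the strict transform of a curve $C\subset X$ of multiplicity $m$ at $x$, then since $\phi_L$ is finite (as $L$ is ample and basepoint-free) a generic hyperplane of $\P^{h^0(L)-1}$ through $\phi_L(x)$ pulls back to $D\in|L|$ with $x\in D\not\supset C$, whence $L\cdot C=D\cdot C\geq\mult_x(D)\cdot m\geq m$.

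For (4), I will rely on the lower semicontinuity of the Seshadri function in the countable Zariski topology: for each $t$ the locus $\{x\in X:\eps(L;x)<t\}$ is a countable union of proper Zariski-closed subsets, so the supremum is attained on its complement, that is, at every very general point, and it is finite by the bound in (2) applied with $V=X$. The only genuinely non-routine ingredient is the existence assertion in (2), whose proof requires a compactness-type argument over Seshadri-optimizing subvarieties, so I plan to cite rather than reproduce it.
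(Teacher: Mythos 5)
Your proposal is correct and is in substance the same as the paper's proof: the paper disposes of all four parts by citing Lazarsfeld (Examples 5.1.3, 5.1.11, 5.1.18 and Proposition 5.1.9, the last applied to the family $p\colon X\times X\to X$ with the diagonal section for (4)), and the arguments you sketch are exactly the standard proofs behind those citations, including the countable-union-of-proper-closed-subsets argument underlying (4). The only point to watch is that the statement allows $x$ to be a singular point of $X$, so in (2)--(3) the identifications $E\cong\P^{n-1}$ and $-E_{|E}\cong\O_{\P^{n-1}}(1)$ should be replaced by the general facts that $\mu_\ast(E^j\cdot\w{V})=0$ for $0<j<k$, that $(-E)^k\cdot\w{V}$ equals the Samuel multiplicity $\mult_x(V)$, and that $\O_E(-E)$ is ample (Remark \ref{rmk:exceptional}).
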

	\begin{proof}
		For (1)-(2)-(3) see \cite[Examples 5.1.3-5.1.18 \& Proposition 5.1.9]{lazarsfeld2017positivity}. Finally (4) immediately follows from \cite[Example 5.1.11]{lazarsfeld2017positivity} applied to the projection onto the first factor $p\colon X\times X\to X,$ with section given by the diagonal morphism $\Delta\colon X\to X\times X,$ and line bundle $\L=p^\ast L.$
	\end{proof}
	
	Recently the definition of Seshadri constant at a point has been generalized to vector bundles in \cite{fulger2021seshadri}. Actually the setting can be even more general. However, for our purposes, it is enough to stick to the case of locally free sheaves.
	
	\begin{defi}\label{def:seshadri-vector}
		Let $X$ be a projective variety and let $\F$ be a vector bundle on $X.$ Let $\pi\colon\P(\F)\to X$ be the natural projection and let $\xi=\O_{\P(\F)}(1)$ be the tautological line bundle on $\P(\F).$ Let $x\in X$ be a point and let $\mathcal{C}_{\F,x}$ denote the set of irreducible curves in $\P(\F)$ that meet $\P(\F(x))$ but are not contained in the fibre $\P(\F(x)).$ The \emph{Seshadri constant of $\F$ at $x$} is 
		\[
		\eps(\F;x):=\inf_{C\in\mathcal{C}_{\F,x}}\left\{\frac{\xi\cdot C}{\mult_{x}(\pi_\ast C)}\right\}.
		\]
	\end{defi}
	
	Whenever the vector bundle is nef, the definition can be formulated analogously to the one of line bundles.
	
	\begin{rmk}[\textbf{\cite[Remark 3.10(a-c)]{fulger2021seshadri}}]\label{rmk:seshadri-vector}
		Let $X$ be a projective variety and let $\F$ be a nef vector bundle on $X.$ Let $x\in $ be a point and let $\mu\colon \wX\to X$ be the blow-up at $x$ with exceptional divisor $E.$ Then:
		\begin{itemize}
			\item[(a)] $\eps(\F;x)\geq0.$
			\item[(b)] $\eps(\F;x)=\sup\left\{\eps\geq0\ |\ \mu^\ast\F\langle-\eps E\rangle\ \text{is nef on}\ \wX\right\}.$
		\end{itemize}
	\end{rmk}
	
	\begin{rmk}\label{rmk:augmented-vector}
		Just like for nef line bundles, for a nef vector bundle $\F$ on a projective variety $X,$ one has the following characterization \cite[Proposition 6.9]{fulger2021seshadri}:\[
		\B_+(\F)=\{x\in X\ |\ \eps(\F;x)=0\}.
		\] Combining this fact with \cite[Corollary 6.10]{fulger2021seshadri}, we immediately deduce that a nef vector bundle $\F$ is ample if and only if $\B_+(\F)=\emptyset.$
	\end{rmk}
	
	These objects share many features with the classical Seshadri constants for line bundles, such as the Seshadri ampleness criterion \cite[Theorem 3.11]{fulger2021seshadri}. For our goals, we will only use these basic properties. Therefore, in the remaining part of the section, we will focus on Seshadri constants for line bundles.	
	
	The following results are well known and standard, hence we will only sketch their proof.
	
	\begin{prop}\label{prop:seshadri-multipoint}
		Let $X$ be a projective variety of dimension $n\geq 1$ and let $L$ be a nef line bundle. 
		\begin{itemize}\setlength\itemsep{0em}
			\item[(1)] If $x_1,\dots,x_q$ are $q$ distinct points, then
			\[
			\eps(L;x_1,\dots,x_q)=\inf\left\{ \frac{(L\cdot C)}{\sum_{i=1}^{q}\mult_{x_{i}}(C)}\ |\ C\subset X\ \mbox{is an irreducible curve passing through some}\ x_i \right\}.
			\]
			
			\item[(2)] If $x_1,\dots,x_q$ are smooth points for $X,$ then 
			\begin{equation}\label{eq:seshadri-lower}
				\eps(L;x_1,\dots,x_q)\geq\frac{1}{q}\min_{j=1,\dots,q}\eps(L;x_i)\geq\frac{1}{q}\eps(L)
			\end{equation} 
			and
			\begin{equation}\label{eq:seshadri-submaximal}
				\eps(L;x_1,\dots,x_q)\leq \left(\frac{L^{n}}{q}\right)^{\frac{1}{n}}.
			\end{equation}
			\item[(3)] If $x\in X$ is a smooth point and $Z\subset X$ is a $0$-dimensional closed subscheme which is smooth at $x\in Z,$ then $\eps(L;Z)\leq\eps(L;x).$
		\end{itemize}
	\end{prop}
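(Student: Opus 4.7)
My plan is to treat each of the three parts by reducing to intersection-theoretic computations on an appropriate blow-up and exploiting the numerical criterion for nefness, which is the standard strategy for multi-point Seshadri computations.

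For (1), I write $\mu\colon\wX\to X$ for the blow-up at $Y=\{x_1,\dots,x_q\}$ with exceptional divisor $E=\sum_i E_i$, so that by definition $\eps(L;x_1,\dots,x_q)$ is the supremum of $\eps\geq 0$ such that $\mu^\ast L-\eps E$ is nef. By Kleiman's criterion this reduces to testing against every irreducible curve $\wF\subset\wX$. Curves $\wF\subset E_i$ give $\mu^\ast L\cdot\wF=0$ and $E\cdot\wF=-\deg(\wF)<0$, so the inequality $(\mu^\ast L-\eps E)\cdot\wF\geq 0$ is automatic for $\eps\geq 0$. The remaining curves are strict transforms of irreducible curves $C\subset X$, and here $\mu^\ast L\cdot\wF=L\cdot C$ while $E\cdot\wF=\sum_i\mult_{x_i}(C)$, which yields the formula after discarding curves missing all $x_i$ (for which the condition is automatic by nefness of $L$).

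For (2), the lower bound follows by averaging: for any irreducible $C$ passing through some $x_i$, the one-point definition gives $L\cdot C\geq\eps(L;x_j)\mult_{x_j}(C)$ for each $j$ with $x_j\in C$, whence
\[
L\cdot C\geq \Bigl(\min_j\eps(L;x_j)\Bigr)\max_j\mult_{x_j}(C)\geq\frac{1}{q}\Bigl(\min_j\eps(L;x_j)\Bigr)\sum_j\mult_{x_j}(C),
\]
and part (1) concludes. The upper bound comes from the numerical computation $(\mu^\ast L-\eps E)^n=L^n-\eps^n q$, using that $E_i^n=(-1)^{n-1}$, $E_i\cdot E_j=0$ for $i\neq j$, and $\mu^\ast L\cdot E_i^k=0$ for $0<k\leq n$; if the divisor is nef its top self-intersection is non-negative, giving $\eps\leq (L^n/q)^{1/n}$.

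For (3), I denote by $\mu_Z\colon\wX_Z\to X$ the blow-up along $Z$ with exceptional divisor $F$. The hypothesis that $Z$ is $0$-dimensional and smooth at $x$ means that $x$ is a reduced isolated point of $Z$, so $\I_{Z/X}$ agrees with $\m_x$ in a Zariski neighborhood of $x$; consequently $\wX_Z\to X$ locally identifies with the point blow-up at $x$, and for every irreducible curve $C\subset X$ through $x$ the strict transform $\wF$ satisfies $F\cdot\wF\geq\mult_x(C)$. If $\eps<\eps(L;Z)$, then $\mu_Z^\ast L-\eps F$ is nef, so $L\cdot C=\mu_Z^\ast L\cdot\wF\geq\eps\,\mult_x(C)$; by Proposition \ref{prop:seshadri-properties}(2) (or rather the one-point version of (1)) this forces $\eps(L;x)\geq\eps$, and the sup gives the claim. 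The only mildly delicate point in the whole proposition is this local identification in (3), but once one observes that ``smooth at $x$'' reduces $Z$ to a simple point locally, nothing further is required; everything else is purely a nefness test on a blow-up.
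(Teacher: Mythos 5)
Your proposal is correct and follows essentially the same route as the paper: all three parts reduce to testing nefness of $\mu^\ast L-\eps E$ on the relevant blow-up against curves inside the exceptional locus versus strict transforms, which is exactly what the references the paper cites for (1) and (2) do, and your part (3) is the paper's argument with its explicit two-step factorization of the blow-up replaced by the equivalent local identification of $\I_{Z/X}$ with $\m_x$ near $x$ (note you also implicitly need that the components of $F$ not lying over $x$ meet the strict transform non-negatively, which holds since the strict transform is not contained in them). One small caveat: part (1) does not assume the $x_i$ are smooth points of $X$, so for curves $\wF\subset E_i$ the justification $E\cdot\wF=-\deg(\wF)$ presumes $E_i\cong\P^{n-1}$ and should be replaced by the ampleness of $\O_{E_i}(-E_i)$ (Remark \ref{rmk:exceptional}), which gives $E\cdot\wF<0$ in general; with that adjustment everything goes through.
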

	\begin{proof}
		The blow-up $\mu\colon\w{X}\rightarrow X$ of $X$ along $\{x_1,\dots,x_q\}$ can be seen as the composition $$\mu=\mu_q\circ\cdots\circ\mu_1\colon\w{X}=X_q\rightarrow X_{q-1}\rightarrow\cdots\rightarrow X_1\rightarrow X_0=X,$$ where $\mu_j,$ for $1\leq j\leq q,$ is the blow-up of $X_{j-1}$ centred at the strict transform $x^{(j)}\in X_{j-1}$ of $x_j$ via $\mu_{j-1}.$ Therefore (1) follows by applying inductively \cite[Proposition 5.1.5]{lazarsfeld2017positivity}.		Part (2) is \cite[(6.4.7) \& Proposition 2.1.1]{bauer2009primer}.	For (3), let $\rho\colon X'\to X$ denote the blow-up of $X$ at $x$ with exceptional divisor $F,$ and let $Z'\subset X'$ be the strict transform of $Z_1=Z-\{x\}.$ Let $\rho'\colon \bX\to X'$ be the blow-up of $X'$ along $Z'$ with exceptional divisor $F_{Z'}.$ The composition $\lambda=\rho'\circ \rho$ is the blow-up of $X$ along $Z$ with exceptional divisor $\b F=F'+F_{Z'},$ where $F'$ is the strict transform of $F$ via $\rho'.$ Let $\eps\geq0$ such that $\lambda^\ast L-\eps \b F$ is nef. To obtain the conclusion, it is enough to prove that $\eps\leq\frac{L\cdot C}{\mult_x(C)}$ for every irreducible curve $C\subset X$ passing through $x.$ Indeed, if this is true, by taking the infimum over all irreducible curve passing through $x$ we get $\eps\leq\eps(L;x)$ by part (1). Then, by taking the sup over all $\eps\geq0$ such that $\lambda^\ast L-\eps \b F$ is nef, we get $\eps(L;Z)\leq\eps(L;x).$\\
		Given an irreducible curve $C\subset X$ containing $x,$ set $C'\subset X'$ and $\b C\subset\bX$ to be respectively the strict transform of $C$ via $\rho$ and the strict transform of $C'$ via $\rho'.$ In particular $\b C$ is the strict transform of $C$ through $\pi,$ and  $C'\nsubseteq F$ and $\b C\nsubseteq\b F$ otherwise they would be mapped onto a point. As a consequence $F_{Z'}\cdot\b C\geq 0.$ Therefore, using also projection formula and \cite[Lemma 5.1.10]{lazarsfeld2017positivity}, we obtain:
		\[
		0\leq(\lambda^\ast L-\eps\b F)\cdot \b C=\rho'^\ast\left(\rho^\ast L-\eps F\right)\cdot\b C-\eps F_{Z'}\cdot\b C\leq(\rho^\ast L-\eps F)\cdot C'=L\cdot C-\eps\cdot \mult_x(C),
		\] which gives the claim.
	\end{proof}
	
	\begin{defi}
		Let $X$ be a projective variety and let $L$ be a nef line bundle. Let $x_1,\dots,x_q\in X$ be  $q$ distinct points. An irreducible curve $C\subset X$ realizing the infimum in Proposition \ref{prop:seshadri-multipoint}(1) is called \emph{Seshadri curve of $L$ at $x_1,\dots,x_q.$} Assuming that $x_1,\dots,x_q$ are smooth, an irreducible curve $D\subset X$ such that 
		\[
		\frac{(L\cdot D)}{\sum_{i=1}^q\mult_{x_i}(D)}\leq \left(\frac{L^{\dim X}}{q}\right)^{\frac{1}{\dim X}}
		\] is said to be \emph{submaximal for $L$ at $x_1,\dots,x_q.$} If the inequality in (\ref{eq:seshadri-submaximal}) is strict, we say that $L$ is \emph{submaximal at $x_1,\dots,x_q.$}
	\end{defi}
	
	\begin{rmk}\label{rmk:exceptional}
		Let $\mu\colon\wX\to X$ be the blow-up of a projective variety $X$ along a $0$-dimensional closed subscheme $Y\subset X$ and let $E$ be the exceptional divisor. Then the line bundle $\O_E(-E)$ is ample.
		
		Indeed, $\O_{\wX}(-E)$ is $\mu$-ample (see, e.g., \cite[Proposition 13.96(1)]{gortz2010algebraic}), hence $\O_E(-E)$ is $\mu_{|E}$-ample by \cite[Example 1.7.7]{lazarsfeld2017positivity}. Since every non-empty subvariety $V\subset E$ is contracted to a point,  Nakai-Moishezon-Kleiman criterion for mapping \cite[Corollary 1.7.9]{lazarsfeld2017positivity} implies that $(-E)^{\dim V}\cdot V>0,$ which says that $\O_E(-E)$ is ample by Nakai-Moishezon-Kleiman theorem \cite[Theorem 1.2.23]{lazarsfeld2017positivity}.
	\end{rmk}
	
	\begin{lemma}\label{lem:seshadri-subscheme2}
		Let $X$ be a projective variety of dimension $n\geq1$ and let $Y\subset X$ be a non-empty proper closed subscheme. Let $\mu\colon\wX\rightarrow X$ be the blow-up of $X$ along $Y$ with exceptional divisor $E,$ and let $L$ be an ample line bundle on $X.$ Then $\eps(L;Y)=\sup\left\{ \l\geq0\ |\ \mu^\ast L-\l E\ \mbox{is ample} \right\}.$
	\end{lemma}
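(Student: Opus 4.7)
The plan is to show that the "ample supremum" $A:=\sup\{\ell\geq 0\mid \mu^{\ast}L-\ell E \text{ is ample}\}$ coincides with the Seshadri constant $N:=\eps(L;Y)$, which by definition is the analogous supremum over $\eps\geq 0$ such that $\mu^{\ast}L-\eps E$ is merely nef. Since ample divisors are nef, the inequality $A\leq N$ is immediate, and the whole content is in producing $A\geq N$.

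First I would produce a single witness $\delta_{0}>0$ with $\mu^{\ast}L-\delta_{0}E$ ample on $\wX$. By (the proof of) Remark \ref{rmk:exceptional}, the line bundle $\O_{\wX}(-E)$ is $\mu$-ample. Combining this with the ampleness of $L$ on $X$ via the standard principle that $H\otimes f^{\ast}M^{\otimes m}$ is ample for $m\gg 0$ whenever $H$ is $f$-ample and $M$ is ample (see, e.g., \cite[Proposition 1.7.10]{lazarsfeld2017positivity}), I conclude that $m\mu^{\ast}L-E$ is ample for $m\gg 0$, so $\mu^{\ast}L-\delta_{0}E$ is an ample $\R$-divisor with $\delta_{0}=1/m>0$.

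Next I would use that nefness is a closed condition on $N^{1}(\wX)_{\R}$, so the supremum $N$ is attained and $\mu^{\ast}L-N E$ is nef. Ampleness of $\mu^{\ast}L-\delta_{0}E$ forces $\delta_{0}\leq N$; if equality holds we are already done. Otherwise, for any $\ell\in (\delta_{0},N)$ the identity
\[
\mu^{\ast}L-\ell E=\frac{\ell-\delta_{0}}{N-\delta_{0}}\bigl(\mu^{\ast}L-N E\bigr)+\frac{N-\ell}{N-\delta_{0}}\bigl(\mu^{\ast}L-\delta_{0}E\bigr)
\]
expresses $\mu^{\ast}L-\ell E$ as a strictly positive $\R$-linear combination of a nef class and an ample class, which is therefore ample. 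Letting $\ell\to N^{-}$ gives $A\geq N$, and hence $A=N$.

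The only genuinely non-routine ingredient is the construction of $\delta_{0}$, which rests on the $\mu$-ampleness of $-E$ from Remark \ref{rmk:exceptional}; everything else is a formal convex-combination argument inside $N^{1}(\wX)_{\R}$.
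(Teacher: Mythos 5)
Your proposal is correct and follows essentially the same route as the paper: both establish $A\leq N$ from ``ample implies nef,'' produce one ample class $\mu^{\ast}L-\delta_{0}E$ to show the ample set is non-empty (the paper cites Hartshorne, Exercise II.7.14(b); you derive it from the $\mu$-ampleness of $-E$), and then conclude $A\geq N$ by convexity --- your explicit nef-plus-ample combination is just the concrete form of the paper's appeal to Rockafellar's theorem that the segment from an interior point of the nef cone to a boundary point stays in the interior.
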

	\begin{proof}
		Write $\eps=\eps(L;Y),$ $\w{L}_\eps=\mu^\ast L-\eps E.$ Let $\eps'$ be the supremum of the set $\left\{ \l>0\ |\ \mu^\ast L-\l E\ \mbox{is ample} \right\},$ which is non-empty by \cite[Exercise II.7.14(b)]{hartshorne2013algebraic}. An ample line bundle is nef, hence $\eps'\leq \eps$ is clear. Once recalled that the ample cone $\w A=\mathrm{Amp}(\w{X})$ of $\wX$ is the interior of the nef cone $\mathrm{Nef}(\w{X}),$ which is in turn the closure of $\w A,$ the other direction immediately follows by \cite[Theorem 6.1]{rockafellar1997convex}.
	\end{proof}
	
	The next remark shows that the separation properties of a line bundle influence the Seshadri constant.
	
	\begin{rmk}\label{rmk:seshadri-jet}
		Let $X$ be a projective variety and let $L$ be a nef line bundle on $X.$ Let $x\in X$ be a point and suppose that $L$ separates tangent vectors at $x,$ namely the restriction map $H^0(X,L)\to H^0(Z,L_{|Z})$ to every $0$-dimensional closed subscheme $Z\subset X$ of length $2$ with $\supp(Z)=\left\{x\right\}$  is surjective (see, for instance, the proofs in \cite[\href{https://stacks.math.columbia.edu/tag/0E8R}{Tag 0E8R}]{stacks-project}). Then $\eps(L;x)\geq1.$
		
		To prove this we use the characterization in Proposition \ref{prop:seshadri-multipoint}(1). Let $C\subset X$ be an irreducible curve and let $v\in T_xC\subset T_xX$ be a non-zero tangent vector. Since $L$ separates tangents at $x,$ we can find a divisor $D\in |L|$ which passes through $x$ with $v\notin T_xD$ \cite[Remark II.7.8.2]{hartshorne2013algebraic}. It follows that $C\not\subset D,$ for otherwise $v\in T_xC\subset T_xD$ giving a contradiction. Then
		\[
		(L\cdot C)=(D\cdot C)\geq \mult_x(C).
		\] This implies  that $\eps(L;x)\geq1,$ as claimed.
	\end{rmk}
	
	In the smooth case, generally the existence of a Seshadri curve is known on surfaces for submaximal line bundles (see \cite[Proposition 1.1]{bauer2008seshadri} and \cite[§1, p.2, lines 9-15]{hanumanthu2017multi}). A special case is \cite[Lemma 7.1]{lopez2022positivity} which says that if $H$ is a very ample line bundle on a smooth projective variety, then $\eps(H;x)=1$ if and only if there is a line passing through $x.$ This means that the line is the Seshadri curve for $H$ at $x.$ 
	
	The goal of this section is to extend \cite[Lemma 7.1]{lopez2022positivity} to the case of a base-point-free ample line bundle. 
	
	\begin{lemma}\label{lem:7.1}
		Let $X$ be a smooth projective variety of dimension $n\geq1,$ let $B$ be a globally generated ample line bundle on $X.$ Let $x\in X$ be a point and let $\mu\colon\w{X}\rightarrow X$ be the blow-up at $x$ with exceptional divisor $E.$ Then the following conditions are equivalent:
		\begin{itemize}\setlength\itemsep{0em}
			\item[(i)] $\eps(B;x)=1.$
			\item[(ii)] $\mu^\ast B-E$ is not ample on $\w{X}.$
			\item[(iii)] $\eps(B;x)=1$ and there exists a Seshadri curve $\G\subset X$ for $B$ at $x$ mapping finitely onto a line $L\subset\P^N$ through $(\phi_B)_{|\G}\colon\G\rightarrow L$ with $x$ as point of maximum multiplicity for $\G$ and as unique point in the fibre over $(\phi_B)_{|\G}(x).$
		\end{itemize}
	\end{lemma}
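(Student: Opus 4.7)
For $(i)\Leftrightarrow(ii)$, I combine Proposition~\ref{prop:seshadri-properties}(3), which gives $\eps(B;x)\geq 1$ (since $B$ is ample and base-point-free), with Lemma~\ref{lem:seshadri-subscheme2}, which identifies $\eps(B;x)=\sup\{\ell\geq 0:\mu^{*}B-\ell E\text{ ample}\}$. Since the ample cone is open in $N^{1}(\widetilde X)_{\R}$, this supremum is not attained on the ray $\ell\mapsto\mu^{*}B-\ell E$; one checks moreover that $\mu^{*}B-\ell E$ is ample for $0<\ell$ sufficiently small (using $\O_{E}(-E)$ ample from Remark~\ref{rmk:exceptional}, together with $B\cdot\G\geq\mult_{x}\G$ on curves meeting $E$). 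Hence the ample interval is $(0,\eps(B;x))$, and $\mu^{*}B-E$ is ample iff $\eps(B;x)>1$. Combined with the lower bound, this gives the equivalence $\eps(B;x)=1$ with non-ampleness of $\mu^{*}B-E$. The implication $(iii)\Rightarrow(i)$ is built into the statement.

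For the substantive implication $(ii)\Rightarrow(iii)$, I first produce an irreducible curve $C\subset\widetilde X$ with $(\mu^{*}B-E)\cdot C=0$. A direct calculation using $(\mu^{*}B)^{k}\cdot E^{n-k}=0$ for $1\leq k\leq n-1$ (projection formula, since $E$ is contracted by $\mu$) and $E^{n}=(-1)^{n-1}$ yields $(\mu^{*}B-E)^{n}=d-1$. For $d\geq 2$ the divisor $\mu^{*}B-E$ is thus nef and big; Nakamaye's theorem presents $\B_{+}(\mu^{*}B-E)$ as its null locus, a proper closed subset (dimension-positive since $\mu^{*}B-E$ is not ample), whose null components can be reduced down to an irreducible curve $C$ by intersecting with sufficiently general sections of an ample line bundle on $\widetilde X$. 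The edge case $d=1$ forces $\phi_{B}\colon X\to\phi_{B}(X)\subset\P^{N}$ to be a finite birational morphism onto a linear $\P^{n}$, hence an isomorphism by Zariski's main theorem; so $(X,B)\cong(\P^{n},\OPn(1))$ and any line through $x$ works. Remark~\ref{rmk:exceptional} rules out $C\subset E$, since then $(\mu^{*}B-E)\cdot C'=-E\cdot C'>0$. So $\G:=\mu(C)$ is an irreducible curve of $X$ meeting $x$ (because $E\cdot C=\mu^{*}B\cdot C>0$), and the projection formula together with the standard strict-transform identity $E\cdot C=\mult_{x}\G$ yields
\[
B\cdot\G=\mu^{*}B\cdot C=E\cdot C=\mult_{x}\G,
\]
so that $\G$ realizes $\eps(B;x)=1$ as a Seshadri curve.

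To extract the geometric conclusion, I analyze $\phi_{B}|_{\G}$. Ampleness of $B$ makes $\phi_{B}|_{\G}\colon\G\to\phi_{B}(\G)$ finite, with $B\cdot\G=\deg(\phi_{B}|_{\G})\cdot\deg\phi_{B}(\G)$. A length-of-fibre computation shows that the scheme-theoretic fibre $\phi_{B}|_{\G}^{-1}(y)$ over $y=\phi_{B}(x)$ has total length $\deg(\phi_{B}|_{\G})$ and local length at $x$ bounded below by $\mult_{x}\G$. The equality $B\cdot\G=\mult_{x}\G$ squeezes $\deg\phi_{B}(\G)=1$, so $\phi_{B}(\G)$ is a line $L\subset\P^{N}$, and forces the fibre over $y$ to be supported only at $x$. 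Running the same length inequality fibrewise over an arbitrary point of $L$ gives $\mult_{z}\G\leq\deg(\phi_{B}|_{\G})=\mult_{x}\G$ for every $z\in\G$, which is the maximality of $x$.

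\textbf{Main obstacle.} The delicate step is producing the curve $C$: nef and not ample does not by itself furnish an irreducible curve of zero intersection, and Kleiman's criterion only guarantees a boundary \emph{class} in $\overline{NE}(\widetilde X)$. Realizing it by an honest curve requires exploiting the bigness $(\mu^{*}B-E)^{n}=d-1\geq 0$ in the generic case $d\geq 2$ (where Nakamaye's theorem and ample-cut dimension reduction apply), and handling $d=1$ by the geometric classification above.
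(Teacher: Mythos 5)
Your treatment of (i)$\Leftrightarrow$(ii) is fine and matches the paper's (openness of the ample cone plus Lemma \ref{lem:seshadri-subscheme2} and Proposition \ref{prop:seshadri-properties}(3)), and the endgame of (ii)$\Rightarrow$(iii) — squeezing $\deg\phi_B(\G)=1$ and the maximality of $\mult_x(\G)$ out of $B\cdot\G=\mult_x(\G)$ via Zariski-type multiplicity inequalities for the finite map $\phi_B|_{\G}$ — is also in line with the paper. The problem is exactly at the step you flag as the main obstacle: your production of an irreducible curve $C$ with $(\mu^\ast B-E)\cdot C=0$ does not work. Nakamaye's theorem gives you a positive-dimensional subvariety $V\subset\wX$ with $(\mu^\ast B-E)^{\dim V}\cdot V=0$, but if $\dim V=k\geq 2$, cutting $V$ with $k-1$ general members of an ample system $A$ produces a curve $C$ with $(\mu^\ast B-E)\cdot C=(\mu^\ast B-E)\cdot A^{k-1}\cdot V$, and this is \emph{positive} whenever $(\mu^\ast B-E)|_V$ is a nonzero nef class (think of $V$ a surface with $D|_V$ nef, $D|_V\not\equiv 0$, $(D|_V)^2=0$: the null curves are special fibres, not general ample cuts, and may not exist at all if $D|_V$ is strictly nef). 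So "nef, big, not ample" does not formally yield a null \emph{curve}; that would amount to knowing that a nef, big, strictly nef divisor is ample, which is not a consequence of Nakamaye and is a delicate question in general.

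The paper closes this gap by exploiting the specific geometry of $\wB=\mu^\ast B-E$: since $H^0(\wX,\wB)=H^0(X,B\o\m_x)$, the base scheme $Y=\Bs(|\wB|)$ is the union of finitely many points (the preimages of $\phi_B^{-1}(\phi_B(x))\smallsetminus\{x\}$) with a subvariety of $E$, and $\wB|_Y$ is ample because $\O_E(-E)$ is. The Zariski--Fujita theorem then makes $\wB$ semiample, and a semiample divisor that is strictly nef is ample (its associated morphism contracts nothing, hence is finite); so non-ampleness of $\wB$ forces an irreducible curve $\G'$ with $\wB\cdot\G'=0$, which is what your argument needs. You would need to add this semiampleness step (or an equivalent substitute) for the proof to go through; as written, the central implication (ii)$\Rightarrow$(iii) is not established.
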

	
	\begin{proof}
		Clearly (iii) implies (i). If $\eps(B;x)=1,$ then $\mu^\ast B-E$ cannot be ample due to the openness of the ample cone: if $\mu^\ast B-E$ was ample, by \cite[Example 1.3.14]{lazarsfeld2017positivity} the $\R$-divisor $\mu^\ast B-(1+\sigma)E$ would be ample for some $\sigma>0,$ yielding $\eps(B;x)>1.$ Hence (i) implies (ii). 
		To conclude the proof, suppose that $\wB=\mu^\ast B-E$ is not ample. Since $H^0(\wX,\wB)=H^0(X,B\o\m_x)$ \cite[Lemma 4.3.16]{lazarsfeld2017positivity}, and the base scheme of the linear series of $|B\o\m_x|$ is the schematic fibre $\phi_B^{-1}(\phi_B^{\hspace{0.00000000001mm}}(x)),$ we deduce that the base scheme $Y=\Bs(|\wB|)$ is the union of the finite set of points $\left\{\mu^{-1}(y)\ |\ y\in \phi_B^{-1}(\phi_B^{\hspace{0.00000000001mm}}(x)),y\neq x \right\}$  with a subvarietiety $V\subset E.$ As $\O_E(-E)\simeq\O_{\P^1}(1),$ it easily follows that $\wB_{|Y}$ is ample. Therefore $\wB$ is semiample by Zariski-Fujita theorem \cite[Theorem 1.10]{fujita1983semipositive}. On the other hand, $\wB$ cannot be strictly nef, for otherwise it would be ample. Therefore there exists an irreducible curve $\G'\subset \wX$ such that $\wB\cdot\G'=0.$ Since $-E_{|E}$ is ample, $\G'$ is not contained in $E.$ Moreover, the irreducible curve $\G=\mu(\G')$ must contain $x,$ otherwise we would have $(\mu^\ast B-E)\cdot\G'=B\cdot\G>0.$ It follows from \cite[Corollary II.7.15]{hartshorne2013algebraic} that $\G'$ is blow-up of $\G$ at $x.$ Therefore, by \cite[Lemma 5.1.10]{lazarsfeld2017positivity}, we get 
		\[
		0=(\mu^\ast B-E)\cdot\G'=B\cdot\G-\mult_x(\G),
		\] which says, together with Proposition \ref{prop:seshadri-properties}(3), that $\eps(B;x)=1$ and that $\G$ is a Seshadri curve for $B$ at $x.$ Now, consider the restriction $\phi'=(\phi_B)_{|\G}\colon\G\to\phi_B(\G)=:L\subset\P^N$ and let $H\subset \P^N$ be a hyperplane. A consequence of Zariski's formula for finite extensions \cite[Equation (2.2)]{abad2020finite}, combined with Proposition \ref{prop:seshadri-properties}(3) for $H$ and projection formula, implies $$\mult_x(\G)\le \max_{y\in \G}\mult_y(\G)\le \deg (\phi')\cdot \max_{z\in L}\mult_{z}(L)\leq \deg(\phi')\cdot(H\cdot L)=B\cdot\G=\mult_x(\G).$$ We deduce that all of these are equalities. In particular, one has $H\cdot L=\mult_{z}(L)$ for some $z\in L,$ forcing $L$ to be a line (e.g. by \cite[Lemma 7.1]{lopez2022positivity}), and
		\begin{equation}\label{eq:lemma7.1}
			B\cdot\G=\mult_x(\G)=\max_{y\in \G}\mult_y(\G)=\deg(\phi')=\deg(\phi')\cdot\mult_{\phi'(x)}(L).
		\end{equation} This forces $x$ to be the only point in the fibre over $\phi'(x)$: the inverse image of an affine open neighborhood of $\phi'(x)$ is an affine open neighborhood of $x$ due to the finiteness of $\phi',$ hence the claim follows by \cite[$(\ast)$ condition]{abad2020finite}. This proves (iii) and completes the proof.
	\end{proof}
	
	\begin{lemma}\label{lem:7.1-2}
		Let $X$ be a smooth projective variety of dimension $n\geq1$ and let $B$ be a globally generated ample line bundle on $X.$ Let $x\in X$ be a point and let $\rho\colon X'\to X$ be the blow-up of $X$ along the schematic fibre $\phi_B^{-1}(\phi_B^{\hspace{0.000001mm}}(x))$ with exceptional divisor $E'.$ Then $\eps(B;\phi_B^{-1}(\phi_B^{\hspace{0.000001mm}}(x)))\geq1.$ Moreover, if $\phi_B^{-1}(\phi_B^{\hspace{0.000001mm}}(x))$ is smooth, the following are equivalent:
		\begin{itemize}\setlength\itemsep{0em}
			\item[(i)] $\eps(B;\phi_B^{-1}(\phi_B^{\hspace{0.00000001mm}}(x)))=1.$
			\item[(ii)] $\rho^\ast B-E'$ is not ample.
			\item[(iii)] There exists an irreducible curve $C\subset X$ such that $B\cdot C=\sum_{j=1}^q\mult_{x_j}(C),$ where $\phi_B^{-1}(\phi_B^{\hspace{0.000001mm}}(x))=\{x_1,\dots,x_q\}.$
		\end{itemize}
	\end{lemma}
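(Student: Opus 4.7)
The plan is to mirror the proof of Lemma \ref{lem:7.1}, replacing the single point $x$ by the fibre $Y:=\phi_B^{-1}(\phi_B(x))$, a $0$-dimensional subscheme of $X$ since $\phi_B$ is finite (being defined by an ample line bundle). First I would establish the unconditional lower bound $\eps(B;Y)\geq 1$ by producing a basepoint-free linear subseries of $|\rho^\ast B-E'|$. Let $V\subseteq H^0(X,B)$ be the subspace of sections pulled back from hyperplanes in $\P^N$ through $\phi_B(x)$. Its base scheme on $X$ is exactly $Y$, so $\mathfrak b(|V|)=\I_{Y/X}$; by the universal property of the blow-up one has $\rho^{-1}\I_{Y/X}\cdot\O_{X'}=\O_{X'}(-E')$, so $\rho^\ast V$ sits inside $H^0(X',\rho^\ast B-E')$ as a basepoint-free subsystem. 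Hence $\rho^\ast B-E'$ is globally generated, in particular nef, yielding $\eps(B;Y)\geq 1$.

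Now assume $Y=\{x_1,\dots,x_q\}$ is smooth, i.e.\ reduced. I would prove the equivalences via the cycle (i)$\Rightarrow$(ii)$\Rightarrow$(iii)$\Rightarrow$(i). The implication (i)$\Rightarrow$(ii) is immediate from the openness of the ample cone: if $\rho^\ast B-E'$ were ample, a small perturbation would show $\rho^\ast B-(1+\sigma)E'$ is still ample hence nef for some $\sigma>0$, giving $\eps(B;Y)\geq 1+\sigma>1$ and contradicting (i). The implication (iii)$\Rightarrow$(i) follows at once from Proposition \ref{prop:seshadri-multipoint}(1): the curve $C$ in (iii) gives $\eps(B;Y)\leq 1$, which combined with the first part of the lemma forces equality.

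The main work is in (ii)$\Rightarrow$(iii). Since $\rho^\ast B-E'$ is globally generated it defines a morphism $\phi\colon X'\to\P^N$ with $\phi^\ast\O_{\P^N}(1)\simeq \rho^\ast B-E'$, and if $\rho^\ast B-E'$ is not ample then $\phi$ cannot be finite onto its image, so some irreducible curve $C'\subset X'$ is contracted, giving $(\rho^\ast B-E')\cdot C'=0$. By Remark \ref{rmk:exceptional}, $\O_{E'}(-E')$ is ample, and since $\rho^\ast B|_{E'}\equiv 0$, the class $(\rho^\ast B-E')|_{E'}$ is ample on $E'$; hence $C'\not\subset E'$. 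Setting $C:=\rho(C')$, this is an irreducible curve in $X$ onto which $C'$ maps birationally, so the projection formula yields $\rho^\ast B\cdot C'=B\cdot C$, while the reducedness of $Y$ at each $x_j$ gives the standard strict-transform identity $E'\cdot C'=\sum_{j=1}^q\mult_{x_j}(C)$. Substituting in $(\rho^\ast B-E')\cdot C'=0$ then yields (iii). The main obstacle is this last intersection computation: the smoothness hypothesis on $Y$ is precisely what makes $E'\cdot C'$ equal the sum of multiplicities $\mult_{x_j}(C)$, via the standard proper-transform formula on blow-ups along reduced $0$-dimensional subschemes, and without reducedness this correspondence between (ii) and (iii) would fail.
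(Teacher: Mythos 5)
Your proof is correct and takes essentially the same route as the paper's: the unconditional bound $\eps(B;\phi_B^{-1}(\phi_B(x)))\geq 1$ comes from the global generation of $\rho^\ast B-E'$ (since $\rho$ is the blow-up of the base scheme of $|B\otimes\m_x|$), and the cycle of implications --- in particular (ii)$\Rightarrow$(iii) via a contracted curve $C'\not\subset E'$, the projection formula, and the identity $E'\cdot C'=\sum_{j}\mult_{x_j}(C)$ for the strict transform over reduced points --- matches the paper's argument step for step. The only cosmetic difference is that you deduce the existence of the contracted curve from the non-finiteness of the morphism defined by $|\rho^\ast B-E'|$, whereas the paper phrases this as ``globally generated and not strictly nef, hence not ample''; the content is identical.
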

	\begin{proof}
		The line bundle $\rho^\ast B-E'$ is generated by global sections because $\rho$ is the blow-up along the base scheme of $|B\o\m_x|$ (see the proof of Lemma \ref{lem:7.1}). In particular it is nef. As the Seshadri constant is the supremum over all $\eps\geq0$ such that $\rho^\ast B-\eps E'$ is nef, we deduce that $\eps(B;\phi_B^{-1}(\phi_B^{\hspace{0.000001mm}}(x)))\geq1.$
		
		We now turn to prove the second part of the statement. Henceforth we suppose that $\phi_B$ is unramified at every point of $\phi_B^{-1}(\phi_B^{\hspace{0.000001mm}}(x)),$ which amounts to say that the schematic fibre over $\phi_B(x)$ is smooth. If (iii) holds, we get (i) by combining the first part and Proposition \ref{prop:seshadri-multipoint}(1). Moreover, item (ii) directly follows from (i) because of the openness of the ample cone (see the proof of Lemma \ref{lem:7.1}). 
		Now assume (ii). Since $\rho^\ast B-E'$ is base-point-free, to not get a contradiction, it cannot be strictly nef, for otherwise it would be ample by \cite[Corollary 1.2.15]{lazarsfeld2017positivity}. Therefore there is an irreducible curve $C'\subset X'$ such that $(\rho^\ast B-E')\cdot C'=0.$ The divisor $-E'_{|E'}$ is ample 
		(Remark \ref{rmk:exceptional}) and every subvariety in $E'$ is contracted to a point, hence $C'\nsubseteq E'.$ Moreover $C=\rho(C')$ passes through some $x_i,$ for otherwise we would have $(\rho^\ast B-E')\cdot C'=B\cdot C>0$ by the ampleness of $B.$ Writing $E'=E_1+\cdots+E_q,$ by projection formula and \cite[Lemma 5.1.10]{lazarsfeld2017positivity} we get
		\[
		0=(\rho^\ast B-E')\cdot C'=\rho^\ast B\cdot C'-\sum_{j=1}^qE_j\cdot C'=B\cdot  C-\sum_{j=1}^q\mult_{x_j}(C).
		\] Thus (iii) holds.
	\end{proof}
	
	We point out that the condition $\eps(B,\phi_B^{-1}(\phi_B^{\hspace{0.000000000000000001mm}}(x)))>1$ is open even without assuming the smoothness of the fibre.
	
	\begin{prop}\label{prop:1-seshadri}
		Let $B$ be an ample and globally generated line bundle on a smooth projective variety $X.$ 
		If $\eps(B;\phi_B^{-1}(\phi_{B}^{\hspace{0.000001mm}}(y)))>1$ for a point $y\in X,$ then the locus \[\left\{x\in X\ |\ \eps(B;\phi_B^{-1}(\phi_{B}^{\hspace{0.000001mm}}(x)))>1\right\}
		\] contains a dense open subset.
	\end{prop}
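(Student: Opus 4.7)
The idea is to realise the set in question as the preimage under $\phi_B$ of a non-empty open subset of $Y := \phi_B(X)$, obtained from the openness of ampleness in a proper flat family; since $X$ is irreducible, any non-empty open subset of $X$ is automatically dense.

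Let $V \subseteq Y$ be the dense open locus over which $\phi_B$ is étale (which exists by generic smoothness in characteristic $0$). The fibre product $\mc Z_V := X\times_Y V \hookrightarrow X \times V$ is a smooth closed subvariety, étale over $V$ of degree $q = \deg \phi_B$, whose fibre over $y'\in V$ is $F_{y'} := \phi_B^{-1}(y')$. Let $\beta_V \colon \mc W_V \to X \times V$ be the blow-up along $\mc Z_V$ with exceptional divisor $\mc E_V$, and let $p_V \colon \mc W_V \to V$ be the composition with the second projection. Since $\mc Z_V$ is a regular embedding flat over $V$, the morphism $p_V$ is proper and flat, and for $y'\in V$ the fibre $p_V^{-1}(y')$ is canonically the blow-up $X'_{y'}$ of $X$ along $F_{y'}$, with $\mc E_V|_{p_V^{-1}(y')} = E'_{y'}$. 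Setting $\mc L_V := \beta_V^*\mathrm{pr}_X^*B \otimes \mc O_{\mc W_V}(-\mc E_V)$, we have $\mc L_V|_{p_V^{-1}(y')} \cong \rho_{y'}^*B - E'_{y'}$. By Lemma~\ref{lem:seshadri-subscheme2}, the latter is ample iff $\eps(B;F_{y'}) > 1$; openness of ampleness in proper flat families (EGA IV, 9.6.4) then shows that
$$A_V := \{\,y'\in V : \eps(B;F_{y'}) > 1\,\}$$
is open in $V$, hence $\phi_B^{-1}(A_V)$ is open in $X$ and contained in the locus of interest.

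It remains to verify $A_V \neq \emptyset$. If $\phi_B(y) \in V$ this is immediate, so assume $\phi_B(y) \notin V$. Pick a smooth curve $T \subseteq Y$ through $\phi_B(y)$ meeting $V$, and let $\mc F_T \subseteq X \times T$ be the schematic closure of $\mc Z_V \cap (X \times (T \cap V))$. Then $\mc F_T$ is flat over $T$ of relative length $q$, coincides with $F_{y'}$ over $T \cap V$, and since $\mc F_T \subseteq X \times_Y T$ it satisfies $\mc F_T|_{\phi_B(y)} \subseteq F_y$ as closed subschemes of $X$. Monotonicity of Seshadri constants under scheme-theoretic inclusion (immediate from the characterisation $\eps(L;F) = \inf_{C\cap F\neq\emptyset} (L\cdot C)/\mult_F(C)$, since $F'\subseteq F$ gives $\mult_{F'}(C)\leq\mult_F(C)$) yields
$$\eps(B;\,\mc F_T|_{\phi_B(y)}) \;\geq\; \eps(B;F_y) > 1.$$
Repeating the blow-up plus openness-of-ampleness argument for the flat family $\mc F_T \to T$, the locus $\{\,t\in T : \eps(B;\mc F_T|_t) > 1\,\}$ is open in $T$ and contains $\phi_B(y)$, so it meets $T \cap V$ in some $y'$; there $\eps(B;F_{y'}) = \eps(B;\mc F_T|_{y'}) > 1$, i.e.~$y' \in A_V$. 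Thus $\phi_B^{-1}(A_V)$ is a non-empty open subset of the irreducible variety $X$, hence dense.

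The main obstacle is the case $\phi_B(y)\notin V$: the natural family $X\times_Y Y \to Y$ is not flat at $\phi_B(y)$, so one cannot directly invoke openness of ampleness at this point. The resolution is to replace $F_y$ by the flat limit $\mc F_T|_{\phi_B(y)}$ along a curve $T$ through $\phi_B(y)$ meeting $V$; this limit is scheme-theoretically smaller than $F_y$, so monotonicity ensures the hypothesis only gets stronger, and the resulting flat family $\mc F_T \to T$ allows the openness argument to proceed.
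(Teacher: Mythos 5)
Your overall mechanism --- a family of blow-ups along the fibres of $\phi_B$, the translation between $\eps(B;\phi_B^{-1}(\phi_B(x)))>1$ and ampleness of $\rho^\ast B-E'$ via Lemma \ref{lem:seshadri-subscheme2}, and openness of ampleness in families --- is the same as the paper's. The structural difference is that you base the family over $\phi_B(X)$ and shrink to the \'etale locus $V$ to secure flatness, whereas the paper blows up $X\times X$ along $(\phi_B\times\phi_B)^{-1}(\Delta)$ and applies amplitude in families \cite[Theorem 1.2.17]{lazarsfeld2017positivity} over $X$ itself, following \cite[Lemma 1.4]{ein1995local}. That theorem needs properness but \emph{not} flatness, so the paper's base contains $y$ from the start and the conclusion is an open neighbourhood of $y$ in one stroke. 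Your insistence on flatness is what forces the entire second half of your argument (the flat-limit detour to show $A_V\neq\emptyset$ when $\phi_B(y)\notin V$), and that is where the proof as written has genuine holes.

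Concretely: (1) the monotonicity $\eps(B;\mc F_T|_{\phi_B(y)})\geq\eps(B;F_y)$ is the load-bearing step, and you justify it by a characterization $\eps(L;F)=\inf_C (L\cdot C)/\mult_F(C)$ for an arbitrary, possibly non-reduced $0$-dimensional subscheme $F$, declared ``immediate.'' Neither this characterization nor the inequality $\mult_{F'}(C)\leq\mult_F(C)$ is established anywhere (Proposition \ref{prop:seshadri-multipoint}(1) treats only reduced $F$, and part (3) proves only a special case of monotonicity, with a full argument); indeed $\mult_F(C)$ is not defined for non-reduced $F$. The statement is true --- one can argue on a common resolution, or on the normalization of $C$, using $\I_{F/X}\subseteq\I_{F'/X}$ together with Remark \ref{rmk:exceptional} to dispose of curves inside the exceptional divisors --- but it requires a proof, not a parenthesis. (2) ``Repeating the blow-up plus openness-of-ampleness argument for $\mc F_T\to T$'' hides the fact that when $\mc F_T|_{\phi_B(y)}$ is non-reduced (which is exactly the situation you are handling), the fibre of $\Bl_{\mc F_T}(X\times T)\to T$ over $\phi_B(y)$ need not equal $\Bl_{\mc F_T|_{\phi_B(y)}}(X)$: blow-up does not commute with this base change. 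The fibre contains that blow-up as the strict transform of $X\times\{\phi_B(y)\}$, possibly together with extra components supported over $\supp(\mc F_T|_{\phi_B(y)})$, and you must verify ampleness of $\mc L_T$ on those components too (doable via the relative ampleness of $-\mc E_T$, but not automatic, and your first-half argument, which relies on the centre being \'etale over the base, does not cover it). (3) Minor: if $Y=\phi_B(X)$ is singular at $\phi_B(y)$ there may be no smooth curve $T\subseteq Y$ through that point; take an integral curve and base-change everything to its normalization.
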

	\begin{proof}
		The proof follows \cite[Lemma 1.4]{ein1995local}. Write $\phi=\phi_B\colon X\to\P^N,$ and, for every point $x\in X,$ let $\mu_x\colon\wX_x\to X$ denote the blow-up along $\phi^{-1}(\phi(x))$ and let $E_x$ be the exceptional divisor. Denote by $\Delta$ the diagonal of $\P^N\times\P^N$ and let $b\colon Z\to X\times X$ be the blow-up of $X\times X$ along $(\phi\times\phi)^{-1}(\Delta)$ with exceptional divisor $E.$ Let $g=\pi_1\circ b$ and $h=\pi_2\circ b$ be respectively the composition of $b$ with the first and the second projection from $X\times X$ to $X.$ Denoting by $Z_x$ every schematic fibre $g^{-1}(x),$ we see that $Z_x\cong\wX_x$ and $g_{|Z_x}=\mu_x.$ In particular, the restrictions $(h^\ast B)_{|Z_x}$ and $E_{|Z_x}$ are respectively $\mu_x^\ast B$ and $E_x.$ Recall that $\mu_x^\ast B-E_x$ is generated by global section, as $\mu_x$ is the blow-up of the base scheme of the linear series $|B\o\m_x|.$ It follows that $\eps(B;\phi^{-1}(\phi(x)))\geq1.$
		
		If $\eps(B;\phi^{-1}(\phi(y)))>1,$ we claim that $\mu_y^\ast B-E_y$ is ample. By \cite[Corollary 1.2.15]{lazarsfeld2017positivity}, it is enough to prove that $\mu_y^\ast B-E_y$ is strictly nef. Let $C\subset\wX_y$ be an irreducible curve. If $E_y\cdot C<0,$ then $C\subset E_y,$ whence $C$ is contracted to a point. Consequently Nakai-Moishezon-Kleiman criterion for mapping \cite[Corollary 1.7.9]{lazarsfeld2017positivity} gives  \[
		(\mu_y^\ast B-E_y)\cdot C=-E_y\cdot C>0.
		\] If $E_y\cdot C\geq0,$ taking a real number $\eps>1$ such that $\mu_y^\ast B-\eps E_y$ is ample (Lemma \ref{lem:seshadri-subscheme2}),  we get \[
		(\mu_y^\ast B-E_y)\cdot C=(\mu_y^\ast B-\eps E_y)\cdot C + (\eps-1)E_y\cdot C>0.
		\] This proves the claim. Applying \cite[Theorem 1.2.17]{lazarsfeld2017positivity} to the pair $(g\colon Z\to X,h^\ast B-E),$ we can find an open dense subset $U\subset X$ of points such that $$(h^\ast B-E)_{|Z_x}=\mu_x^\ast B-E_x$$ is ample whenever $x\in U.$ The $\R$-divisor $\mu_x^\ast B-(1+\sigma)E_x$ remains ample for all sufficiently small $\sigma>0$ \cite[Example 1.3.14]{lazarsfeld2017positivity}, hence $\eps(B;\phi^{-1}(\phi(x)))>1$ for every $x\in U.$
	\end{proof}
	
	\begin{defi}
		Let $X$ be a smooth projective variety and let $B$ be a globally generated ample line bundle. 
		
		A \emph{$1$-Seshadri curve for $B$ at a point $x$} is an irreducible curve $\G\subset X$ such that $B\cdot\G=\mult_x(\G).$ We say that $X$ is \emph{covered by $1$-Seshadri curves for $B$} if for every point $y$ there is a $1$-Seshadri curve for $B$ at $y.$ 
		
		A  \emph{$1$-Seshadri curve for $\phi_B$ at $x$} is an irreducible curve $C\subset X$ such that $\phi_B^{-1}(\phi_B^{\hspace{0.000001mm}}(x))$ is smooth and
		$B\cdot C=\sum_{j=1}^q\mult_{x_j}(C),$ where $\phi_B^{-1}(\phi_B^{\hspace{0.000001mm}}(x))=\{x=x_1,\dots,x_q\}.$ We say that $X$ is \emph{generically covered by $1$-Seshadri curves for $\phi_B$} if for a general point $y\in X$ there exists a $1$-Seshadri curve for $\phi_B$ at $y.$
	\end{defi}
	
	\begin{rmk}\label{rmk:covered-1-seshadri}
		Let $X$ be a smooth projective variety and let $B$ be a globally generated ample line bundle on $X.$ If $X$ is not generically covered by $1$-Seshadri curves for $\phi_B,$ then there exists a point $x\in X$ such that $\phi_B^{-1}(\phi_B^{\hspace{0.000001mm}}(x))$ is smooth and $\eps(B;\phi_B^{-1}(\phi_B^{\hspace{0.000001mm}}(x)))>1.$ Actually, there is a dense open subset $V\subset X$ such that the schematic fibre over $\phi_B(y)$ is smooth and there is no $1$-Seshadri curve for $\phi_B$ at $y,$ for every $y\in V.$
		
		To see this, let $U=\phi_B^{-1}(\phi_B(X)-\Ram(\phi_B))$ be the (dense) open subset where $\phi_B^{-1}(\phi_B^{\hspace{0.000001mm}}(y))$ is smooth for every $y\in U,$ and suppose, by contradiction, that $\eps(B;\phi_B^{-1}(\phi_B^{\hspace{0.000001mm}}(y)))\leq1$ for every $y\in U.$ Lemma \ref{lem:7.1-2} implies that $\eps(B;\phi_B^{-1}(\phi_B^{\hspace{0.000001mm}}(y)))=1$ and, furthermore, that there exists a $1$-Seshadri for $\phi_B$ at $y,$ for every $y\in U.$ On the other hand, as $X$ is not generically  covered by $1$-Seshadri curves for $\phi_B,$ every non-empty open subset $W\subset X$ contains a point $z\in W$ such that either $\phi_B^{-1}(\phi_B^{\hspace{0.000001mm}}(z))$ is singular, or does not admit a $1$-Seshadri curve for $\phi_B$ at $z.$ As $\phi_B^{-1}(\phi_B^{\hspace{0.000001mm}}(y))$ is smooth for every $y\in U,$ by taking $W=U$ we get a contradiction. Therefore there must exist $x\in U$ such that $\eps(B;\phi_B^{-1}(\phi_B^{\hspace{0.000001mm}}(x)))>1$ as claimed. The conclusion follows by combining the first part with Proposition \ref{prop:1-seshadri} and Lemma \ref{lem:7.1-2}.
	\end{rmk}
	
	In the last part of the section we make some observations regarding Lemma \ref{lem:7.1}.	In a sort of analogy with \cite[Lemma 7.1]{lopez2022positivity}, we will see in Remark \ref{rmk:seshadri-B-line} that if $\phi_B$ is unramified at $x,$ then the $1$-Seshadri curve is a \emph{$B$-line} (Definition \ref{def:B-line}).
	
	The next easy well-known lemma will be helpful in many situations. 
	
	\begin{lemma}\label{lem:degree-1}
		Let $Y$ be a projective variety of dimension $m\geq 1$ and let $L$ be a globally generated ample line bundle such that $L^m=1.$ Then $(Y,L)\cong(\P^m,\O_{\P^m}(1))$ via $\phi_L.$
	\end{lemma}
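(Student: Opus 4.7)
The plan is to show that $\phi_L$ realizes the claimed isomorphism by factoring it as $\phi_L\colon Y\xrightarrow{\psi} Z\hookrightarrow\P^N$, where $N+1=h^0(Y,L)$ and $Z=\phi_L(Y)$, and analyzing each factor. First I would observe that, since $L$ is ample and globally generated, $\phi_L$ is a morphism which is finite onto its image: any irreducible curve $C\subseteq Y$ contracted by $\phi_L$ would satisfy $L\cdot C=\phi_L^{\ast}\O_{\P^N}(1)\cdot C=0$, contradicting the ampleness of $L$. Thus $\psi$ is a finite surjective map onto a subvariety $Z\subseteq\P^N$ with $\dim Z=m$, and $L\cong\psi^{\ast}\O_Z(1)$.

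Next I would compute the top self-intersection. By the projection formula applied to $\psi$,
\[
1=L^{m}=(\psi^{\ast}\O_Z(1))^{m}=\deg(\psi)\cdot\O_Z(1)^{m}=\deg(\psi)\cdot\deg_{\P^N}(Z).
\]
Both factors on the right are positive integers, so $\deg(\psi)=1$ and $\deg_{\P^N}(Z)=1$. Since every subvariety of degree $1$ in projective space is a linear subspace, $Z$ is an $m$-dimensional linear $\P^m\subset\P^N$, and hence $(Z,\O_Z(1))\cong(\P^m,\O_{\P^m}(1))$.

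It remains to promote $\psi\colon Y\to Z\cong\P^m$ from a finite, generically degree-one morphism to an isomorphism. As we work over $\C$, degree $1$ forces $\psi$ to be birational. Here I would use the standard normality argument: $\psi$ is affine, so $Y\cong\underline{\Spec}_{Z}(\psi_{\ast}\O_Y)$, and $\psi_{\ast}\O_Y$ is a coherent $\O_Z$-algebra which, via birationality, embeds into the constant sheaf of the function field $K(Z)$ and is integral over $\O_Z$. Since $Z\cong\P^m$ is smooth and thus normal, every local section of $\psi_{\ast}\O_Y$ lies in $\O_Z$, so $\O_Z\to\psi_{\ast}\O_Y$ is an isomorphism and consequently $\psi$ is an isomorphism. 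Pulling back $\O_{\P^m}(1)$ along $\psi$ recovers $L$, completing the identification $(Y,L)\cong(\P^m,\O_{\P^m}(1))$.

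I do not foresee a real obstacle here; the only delicate point is the last paragraph, where one must invoke the normality of $\P^m$ (rather than of $Y$, which is not assumed) to convert "finite birational" into "isomorphism". The numerical input $L^{m}=1$ plus the projection formula does all the geometric work, and the rest is bookkeeping.
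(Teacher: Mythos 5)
Your proof is correct and complete: the factorization through the schematic image, the projection-formula computation $1=\deg(\psi)\cdot\deg(Z)$ forcing both factors to equal $1$, the identification of a degree-one irreducible subvariety with its linear span, and the conversion of a finite birational morphism onto the normal variety $\P^m$ into an isomorphism are all carried out properly, with the one genuinely delicate point (invoking normality of the target rather than of $Y$) correctly identified and handled. The paper itself states this lemma as ``easy well-known'' and supplies no proof, so there is nothing to compare against; your argument is the standard one and fills that gap faithfully.
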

	
	It's an immediate consequence of Lemma \ref{lem:degree-1} that every $B$-line is smooth and rational.
	
	\begin{rmk}\label{rmk:B-line}
		Let $X$ be a smooth projective variety and let $B$ an ample and globally generated line bundle. Let $C\subset X$ be a $B$-line and let $V\subset H^0(X,B)$ be any subspace such that $|V|$ is base-point-free. Then $C$ is an irreducible smooth rational curve which is mapped isomorphically onto $\P^1$ by $(\phi_V)_{|C}.$ In particular, a $B$-line for a very ample line bundle $B$ is an actual line in $X\subset\P^N$ via the embedding determined by $|B|.$
		
		Indeed, irreducibility is immediate from Nakai-Moishezon-Kleiman criterion \cite[Theorem 1.2.23]{lazarsfeld2017positivity}. Now, $B_{|C}$ is a globally generated ample line bundle on an irreducible curve having $\deg(B_{|C})=1.$ Therefore $\phi_{B_{|C}}$ induces an isomorphism with $\P^1$ by Lemma \ref{lem:degree-1}. Consider the subspace $V_C=\Imm\left(V\to H^0(C,B_{|C})\right)$ determined by the restriction of sections. We prove that $(\phi_V)_{|C}=\phi_{B_{|C}}.$ It's clear that $|V_C|$ is base-point-free, for otherwise there would be a point $x\in C$ such that $0=s_{|C}(x)=s(x)$ for every $s\in V$ yielding a contradiction. The morphism $\phi_V\colon X\to\P^N$ is finite, for otherwise $B_{|Z}\cong(\phi_V^\ast\OPN(1))_{|Z}$ would be trivial, hence non-ample, for every subvariety $Z\subset X$ of positive dimension which is contracted to a point. In particular $\dim V_C\geq 2.$ As $V_C\subset H^0(C,B_{|C})\cong H^0(\P^1,\O_{\P^1}(1))\cong\C^2,$ we must have $V_C=H^0(C,B_{|C}).$
	\end{rmk} 
	
	\begin{rmk}\label{rmk:seshadri-B-line}
	Let $X$ be a smooth projective variety and let $B$ a base-point-free ample divisor. Suppose there is a point $x\in X$ such that $\eps(B;x)=1$ and let $\G\subset X$ be a $1$-Seshadri curve for $B$ at $x$ (Lemma \ref{lem:7.1}). If $\phi_B$ is unramified at $x,$ then $\G$ is a $B$-line. However the converse does not hold.
		
		To prove this, first set $\phi=\phi_B$ and $\phi'=(\phi_{B})_{|\G}.$ If $\phi$ is unramified at $x,$ then so is $\phi',$ as the inclusion $\iota\colon\G\hookrightarrow X$ is unramified \cite[\href{https://stacks.math.columbia.edu/tag/02GC}{Tag 02GC}]{stacks-project}. Therefore,  using (\ref{eq:lemma7.1}), \cite[Theorem 2.28]{shafarevich1994basic} and Lemma \ref{lem:7.1}.3, we immediately get 
		\[
		B\cdot\G=\deg(\phi')=\#\left(\left(\phi'\right)^{-1}(\phi(x))\right)=\#\{x\}=1.
		\] A counterexample for the opposite implication is given by Del Pezzo threefolds of degree $2$: for every point there is a $B$-line passing through \cite[Proposition III.1.4(ii)]{iskovskikh1980anticanonical}, but $\phi$ has non-empty ramification locus (see Definition \ref{def:DP}).
	\end{rmk}

	The morphism induced by a very ample line bundle is unramified at all points. Therefore, Remark \ref{rmk:seshadri-B-line} shows that Lemma \ref{lem:7.1} reduces to \cite[Lemma 7.1]{lopez2022positivity} when $B$ is very ample.
	
	However, not all $1$-Seshadri curves are $B$-lines.
	
	\begin{rmk}\label{rmk:seshadrik3}
		A result due to Bogomolov and Mumford \cite{mori2006uniruledness}, or see \cite[Theorem 13.1.1]{huybrechts2016lectures}, says that a general polarized K3 surface $(S,B)$ of genus $g\ge 2$ contains a nodal integral rational curve $\G\subset S$ such that $\G\in |B|.$ This says in particular that $|B|$ is base-point-free \cite[Proposition 2.3.5]{huybrechts2016lectures}. Letting $g=2,$ then the polarization gives rise to a finite double cover $\phi_B\colon S\to\P^2.$ Now, note that $\G$ contains $2$ singular points: the genus formula implies that $p_a(\G)=2,$ then, using that $\G$ is rational, we deduce that the number of nodes is $\delta=p_a(\G)-p_g(\G)=2.$ Taking a nodal point $x\in\G,$ we have 
		\[
		\mult_x(\G)=2=2g-2=B^2=B\cdot\G,
		\]
		saying that $\eps(B;x)=1.$ Therefore $\G$ is a $1$-Seshadri curve for $B$ at $x$ having $x$ as point of maximum multiplicity, $\G$ being nodal, which is mapped finitely onto a line (by construction), just as said in Lemma \ref{lem:7.1}(iii). However, if $(S,B)$ is chosen very general, we can suppose $\rho(S)=1,$ which means that $\Pic(S)\cong\Z$ \cite[Proposition 1.2.4]{huybrechts2016lectures}. In particular there cannot exist $B$-lines as $1$-Seshadri curves.
		
		It's interesting to observe that, despite $\mult_x(\G)=2$ and $x\in\Ram(\phi),$ the exceptional divisor $E$ of the blow-up $\mu\colon\w S\to S$ at $x$ is not contained in the base scheme of $\mu^\ast B-E.$
	\end{rmk}
	
	To conclude this part, we see an example of $1$-Seshadri curve at $\phi_B^{-1}(\phi_B^{\hspace{0.000001mm}}(x)).$
	
	\begin{rmk}
		Let $\phi\colon S\to\P^2$ be a finite cover of degree $q$ branched over a smooth irreducible plane curve $D\subset\P^2$ and let $B=\phi^\ast\O_{\P^2}(1)$. Fix a point $x\in S$ such that $\phi(x)\notin D$ and set $\phi^{-1}(\phi(x))=\{x_1,\dots,x_q\}.$ The pullback via $\phi$ of a line $L\subset\P^2$ passing through $\phi(x)$ yields a divisor $C\in |B|$ containing all $x_i$'s. Combining Lemma \ref{lem:7.1-2} and Proposition \ref{prop:seshadri-multipoint}(1) we immediately see
		\[
		1\leq\frac{B\cdot C}{\sum_{j=1}^q\mult_{x_j}(C)}\leq\frac{B^2}{q}=1,
		\] which says that $C$ is a $1$-Seshadri curve for $B$ at $\phi^{-1}(\phi(x)).$
	\end{rmk}

	\section{Proof of Theorem \ref{thm:main} and some remarks}\label{sect:main}
	The proof of Theorem \ref{thm:main} follows very closely the proofs of \cite[Theorem 7.2 \& Theorem 1]{lopez2022positivity}. The principal obstruction is that a base-point-free ample divisor $B$ on a smooth projective variety $X$ generally does not separate points. Therefore the line bundle $\wB=\O_{\wX}(\mu^\ast B-E)$ on the blow-up $\mu\colon\wX\to X$ of a point with exceptional divisor $E,$ even if it is ample, can no longer be globally generated. Consequently a vector bundle on $\wX$ which is $0$-regular with respect to $\wB$ could not be generated by global sections. It comes necessary proving a version of Castelnuovo-Mumford regularity theorem which allows the polarization to have a non-empty base locus.
	
	\begin{prop}[Castelnuovo-Mumford] \label{prop:castelnuovo-mumford}
		Let $X$ be a projective variety of dimension $n\geq1$ and let $A$ be an ample line bundle on $X.$ Let $Y=\mathrm{Bs}(|A|)$ be the base scheme of $|A|$ with base ideal $\J=\mathfrak{b}(|A|).$ Suppose that $\dim Y=0$ and let $\mu\colon\wX\to X$ be the blow-up of $X$ along $Y$ with exceptional divisor $E.$ Let $\F$ be a vector bundle on $X$ which is $m$-regular with respect to $A.$ Then for every $k\geq\l\geq0$:
		\begin{itemize}\setlength\itemsep{0em}
			\item[(1)] Assuming that $\mu^\ast A-E$ is ample on $\wX,$ then $\F((m+k)A)$ is generated by global sections.
			\item[(2)] The multiplication map
			\[
			\mu_{m,k}\colon H^0(X,\F(mA))\o H^0(X,kA)\rightarrow H^0(X,\F((m+k)A)\o\J^{\o k})
			\]
			and the map
			\[
			\mu'_{m,k}\colon H^0(X,\F((m+k)A)\o\J^{\o k})\rightarrow H^0(X,\F((m+k)A)\o\J^k)
			\]
			are both surjective.
			\item[(3)] $\F\o\J^{\o\l}$ and $\F\o\J^\l$ are $(m+k)$-regular with respect to $A.$ 
		\end{itemize}
	\end{prop}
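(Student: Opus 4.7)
The plan is to adapt the classical Castelnuovo--Mumford argument, inducting on $\dim X$ and using a general member $H\in|A|$ as a substitute for a very-ample hyperplane section. Since $Y=\Bs(|A|)$ is zero-dimensional and $\F$ is locally free, a general section $s\in H^0(X,A)$ is a non-zero-divisor on $\F$ and its zero scheme $H=Z(s)$ is an irreducible divisor containing $Y$; this gives the short exact sequence
\[
0\to \F(pA)\to \F((p+1)A)\to \F((p+1)A)|_H\to 0
\]
that drives the induction. The extra ampleness hypothesis in (1) enters only to handle global generation at points of $\supp Y$, where it permits a reduction to the blow-up $\mu\colon\wX\to X$ on which $\widetilde A:=\mu^*A-E$ is ample and base-point-free, so that classical Castelnuovo--Mumford applies directly there.

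I would prove (3) first, by induction on $k+\l\geq 0$; the base case $k=\l=0$ is the hypothesis. For the inductive step, I combine the displayed sequence with the two short exact sequences
\[
0\to \ker(\J^{\o\l}\to\J^\l)\to\J^{\o\l}\to\J^\l\to 0,\qquad 0\to\J^\l\to\O_X\to\O_X/\J^\l\to 0,
\]
whose correction terms are supported on $Y$ and hence zero-dimensional, contributing only in degree $0$. The required vanishings reduce to the inductive hypothesis together with the lower-dimensional case of the proposition applied to $\F|_H$ on $H$, whose base scheme $Y\cap H=Y$ is still zero-dimensional. Part (2) is then the standard Mumford argument: the surjectivity of $\mu_{m,k}$ is proved by induction on $k$ from the displayed sequence, using the $H^1$-vanishings supplied by (3); the surjectivity of $\mu'_{m,k}$ reduces, via the kernel sequence above, to the vanishing of $H^1(X,\F((m+k)A)\o\ker(\J^{\o k}\to\J^k))$, again provided by (3).

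For (1), I would pick a general $H\in|A|$ containing the given point $x$: such $H$ exists for every $x$ (automatically if $x\in\supp Y$, and because sections vanishing at $x$ form a codimension-one subspace of $H^0(X,A)$ otherwise). The ampleness hypothesis on $\widetilde A$ passes to $H$: the strict transform of $H$ in $\wX$ is the blow-up of $H$ along $Y$, and the restriction of $\widetilde A$ is the analogous line bundle, ample as a restriction of an ample class. By induction on $\dim X$, $\F|_H((m+k)A|_H)$ is globally generated at $x$, and the displayed sequence together with $H^1(X,\F((m+k-1)A))=0$ from (3) lifts the generating sections from $H$ to $X$. The base case $\dim X=1$ is handled directly: on a curve, $\widetilde A=A(-D)$ for the base divisor $D$ supported on $Y$ is already ample and base-point-free, and classical Castelnuovo--Mumford applies on $X$ itself.

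The principal technical obstacle is the bookkeeping in Step~1, where one must simultaneously track $\J^\l$ and $\J^{\o\l}$ and verify that the hypotheses of the proposition are stable under restriction to a general $H\in|A|$. In particular, the ampleness of $\widetilde A$---the only non-trivial link between $X$ and $\wX$---passes to $H$ precisely because $Y$ is zero-dimensional and $H$ is general, which ensures that the strict transform of $H$ in $\wX$ coincides with the blow-up of $H$ along $Y$; some extra care is required when $Y$ is non-reduced, to identify $H\cap E$ with the exceptional divisor of this latter blow-up.
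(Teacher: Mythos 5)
Your strategy---induction on $\dim X$ by cutting with a general member $H\in|A|$---is genuinely different from the paper's, which never cuts by divisors: it forms the Koszul complex of the non-surjective evaluation map $H^0(X,A)\o\O_X\to A$, whose image is $\J\o A$ and which is exact off the zero-dimensional $Y$, runs the Mumford chase through \cite[Proposition B.1.2]{lazarsfeld2017positivity}, and obtains global generation off $Y$ by descending from the blow-up via the identification $H^0(\wX,\mu^\ast(\F((m+k)A))\o\O_{\wX}(-kE))\cong H^0(X,\F((m+k)A)\o\J^k)$ for $k\gg0$. However, your route has gaps that are not just bookkeeping. The most serious is in part (2): the content of the statement is that the multiplication map surjects onto $H^0(X,\F((m+k)A)\o\J^{\o k})$ --- it cannot surject onto the full $H^0(X,\F((m+k)A))$, since every product $s\cdot t$ with $t\in H^0(X,kA)$ vanishes along $\Bs(|kA|)$. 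The ``standard Mumford argument by induction on $k$'' proves surjectivity onto the full $H^0$ and relies on $A$ being globally generated (it is ultimately the exactness of the Koszul complex of the evaluation map); here $A$ is not globally generated, and identifying the image as exactly the $\J^{\o k}$-twisted sections is precisely where the paper needs the Koszul complex that is exact only off $Y$. Your sketch does not engage with this.

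The dimension induction also does not close as stated. The inductive hypothesis on $H$ concerns the base ideal $\mathfrak{b}(|A_{|H}|)$ of the \emph{complete} linear system on $H$ and the blow-up of $H$ along $\Bs(|A_{|H}|)$; since the restriction $H^0(X,A)\to H^0(H,A_{|H})$ need not be surjective, this ideal can strictly contain $\J\cdot\O_H$, so the blow-up of $H$ along $\Bs(|A_{|H}|)$ need not coincide with the strict transform of $H$ in $\wX$, and the ampleness you obtain by restricting $\mu^\ast A-E$ is a statement about the wrong blow-up. Likewise the vanishings imported from dimension $n-1$ involve powers of $\mathfrak{b}(|A_{|H}|)$ rather than of $\J\cdot\O_H$, and the comparison (through sheaves supported on $Y$, with $\J^{\o\l}$ not an ideal sheaf) is an additional argument you have not supplied. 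Finally, for $x\notin\supp Y$ your $H$ is a general member of the subsystem of sections vanishing at $x$, not of $|A|$, so integrality requires a separate Bertini discussion; and in the base case $n=1$ the variety may be a singular curve on which $Y$ is not Cartier, so $\mu$ is a nontrivial blow-up and ``classical Castelnuovo--Mumford on $X$ itself'' does not literally apply. The paper's Koszul-complex argument avoids all of these issues because it works on $X$ itself for an arbitrary projective variety.
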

	\begin{proof}
		Letting $V=H^0(X,A)$ and $U=X-Y,$ the evaluation map $V_X=V\otimes\O_X\rightarrow A$ is surjective on $U.$ Following the construction in \cite[§B.2]{lazarsfeld2017positivity}, we can form the Koszul complex
		\[
		0\rightarrow\Lambda^rV_X(-rA)\rightarrow\cdots\rightarrow\Lambda^2V_X(-2A)\rightarrow V_X(-A)\xrightarrow{\eps}\J\rightarrow0,
		\]
		where $r=\dim V$ which is exact off $Y$ and with $\eps$ being surjective. Tensoring through by $\F(sA)\o\J^{\o t},$ with $t\geq0,$ we obtain the following complex 
		\begin{equation}\label{eq:koszul}
			0\rightarrow\Lambda^rV_X\o\F((s-r)A)\o\J^{\o t}\rightarrow\cdots\rightarrow V_X\o\F((s-1)A)\o\J^{\o t}\xrightarrow{\delta}\F(sA)\o\J^{\o (t+1)}\rightarrow0.
		\end{equation} 
		This remains exact off $Y,$ since $(\F(sA)\o\J^{\o t})_{|U}\cong\F(sA)_{|U}$ is locally free, and $\delta$ is still surjective because tensor product is right exact. For every $1\leq i\leq r,$ let $k_i=\dim \Lambda^iV.$
		
		\emph{Step 1: we prove (3).}\\
		The $(m+k)$-regularity of $\F\o\J^\l$ will follow once we know the $(m+k)$-regularity of $\F\o\J^{\o\l}.$ Indeed, consider the short exact sequence
		\[\begin{tikzcd}
			0\rar&\mc{K}\rar&\J^{\o q}\rar&\J^q\rar&0
		\end{tikzcd}\]
		for $q\geq1.$ For any $x\in U,$ the map $\O_{X,x}\cong\J^{\o q}_x\rightarrow\J^q_x\cong\O_{X,x}$ is an isomorphism, therefore $\mathrm{supp}(\mc{K})=Y$ has dimension $0.$ As a consequence, $H^i(X,\mc{K}\o\F(pA))=0$ for every $i>0,$ forcing the isomorphism $H^i(X,\F(pA)\o\J^{\o q})\cong H^i(X,\F(pA)\o\J^{q})$ for all $i>0,p\in\Z$ and $q\geq1$ which in turn gives the claim. In addition to this, we observe that it implies the surjectivity of the map 
		\begin{equation}\label{eq:mult-map-product-ideal}
			H^0(X,\F(pA)\o\J^{\o q})\rightarrow H^0(X,\F(pA)\o\J^q)
		\end{equation}
		for all $p\in\Z.$ 
		
		To prove the $(m+k)$-regularity of $\F\o\J^{\o\l}$ we proceed by induction $k\geq\l\geq0$ with base case $k=\l=0$ given by the hypothesis. Consider separately the case $\l=0$ with $k\geq1,$ and set $(s,t)=(m+k-i,0)$ in (\ref{eq:koszul}) for $i>0.$ By induction we know that $\F$ is $(m+k-1)$-regular. Hence, we get the vanishing 
		\[
		H^{i+j}(X,\Lambda^{j+1}V_X\o\F((m+k-i-j-1)A))\cong H^{i+j}(X,\F(((m+k-1)-(i+j))A))^{\oplus k_{j+1}}=0
		\] for $0\leq j\leq r-1.$ Applying \cite[Proposition B.1.2]{lazarsfeld2017positivity} we immediately deduce that $H^i(X,\F((m+k-i)A)\o\J)=0$ for $i>0,$ which amounts to say that $\F\o\J$ is $(m+k)$-regular. Twist the short exact sequence 
		\[
		\begin{tikzcd}
			0\rar&\J\rar&\O_X\rar&\O_Y\rar&0
		\end{tikzcd}
		\]
		through by $\F((m+k)A)$ and then take the associated long exact sequence in cohomology. One obtain short exact sequences
		\[
		\begin{tikzcd}
			0=H^i(X\F((m+k)A)\o\J)\rar&H^i(X,\F((m+k)A))\rar&H^i(Y,\F((m+k)A)_{|Y})=0
		\end{tikzcd}
		\] for every $i>0.$ This says that $\F$ is $(m+k)$-regular. 
		
		Now, let $k\geq\l\geq1,$ fix $i>0$ and set $(s,t)=(m+k-i,\l-1)$ in (\ref{eq:koszul}). For every $0\leq j\leq r-1$ we have the vanishing 
		\begin{align*}
			\quad&H^{i+j}(X,\Lambda^{j+1}V_X\o\F((m+k-i-j-1)A)\o\J^{\o (\l-1)})\\
			&\cong H^{i+j}(X,\F(((m+k-1)-(i+j))A)\o\J^{\o (\l-1)})^{\oplus k_{j+1}}\\
			&=0
		\end{align*}
		due to the $(m+k-1)$-regularity of $\F\o\J^{\o(\l-1)}$ given by the inductive hypothesis. Using again  \cite[Proposition B.1.2]{lazarsfeld2017positivity} we get $H^i(X,\F((m+k-i)A)\o\J^{\o\l})=0,$ completing the inductive step.
		
		\emph{Step 2: $\F((m+k)A)$ is generated by global sections at every point of $Y.$}\\
		Set $(s,t)=(m+k,0)$ in (\ref{eq:koszul}). The $(m+k)$-regularity of $\F$ given by Step 1 implies the vanishing
		\[
		H^{1+j}(X,\Lambda^{j+1}V_X\o\F((m+k-1-j)A))\cong H^{1+j}(X,\F(((m+k)-(1+j))A))^{\oplus k_{j+1}}=0
		\] for each $0\leq j\leq r.$ It follows from \cite[Proposition B.1.2]{lazarsfeld2017positivity} that $H^1(X,\F\left((m+k)A\right)\o\J)=0.$ Taking the cohomology of the short exact sequence
		\[
		\begin{tikzcd}
			0\rar& \F((m+k)A)\o\J\rar&\F((m+k)A)\rar&\F((m+k)A)\o\O_Y\rar&0,
		\end{tikzcd}
		\] we see that this vanishing forces the restriction map $$H^0(X,\F((m+k)A))\rightarrow H^0(X,\F((m+k)A)\o\O_Y)$$ to be surjective. This proves the claim.
		
		\emph{Step 3: we prove (2)}\\
		Proceed by induction on $k\geq0$ with trivial base case $k=0.$
		Assume $k\geq1$ and set $(s,t)=(m+k,k-1)$ in (\ref{eq:koszul}). The coherent sheaf $\F\o\J^{\o(k-1)}$ is $(m+k-1)$-regular by Step 1, hence
		\[
		H^j(X,\Lambda^{1+j}V_X\o\F((m+k-1-j)A)\o\J^{\o(k-1)})=0
		\] for every $j\geq1.$ Applying again \cite[Example B.1.3]{lazarsfeld2017positivity} we obtain the surjectivity of the map
		\[
		H^0(X,\F((m+k-1)A)\o\J^{\o(k-1)})\o H^0(X,A)\rightarrow H^0(X,\F((m+k)A)\o\J^{\o k}).
		\] Applying the inductive hypothesis wet get a surjective map
		\[
		H^0(X,\F(mA))\o H^0(X,(k-1)A)\o H^0(X,A)\rightarrow H^0(X,\F((m+k)A)\o\J^{\o k}).
		\] 	Factoring through the natural multiplication map $H^0(X,(k-1)A)\o H^0(X,A)\rightarrow H^0(X,kA),$ we obtain a commutative diagram 
		\[
		\begin{tikzcd}
			H^0(X,\F(mA))\o H^0(X,(k-1)A)\o H^0(X,A)\arrow[rrr]\arrow[dd] &&& H^0(X,\F((m+k)A)\o\J^{\o k}) \\
			&&&\\
			H^0(X,\F(mA))\o H^0(X,kA) \arrow[uurrr] &
		\end{tikzcd}
		\]
		which forces the surjectivity of
		\[
		\mu_{m,k}\colon H^0(X,\F(mA))\o H^0(X,kA)\rightarrow H^0(X,\F((m+k)A)\o\J^{\o k}).
		\] 		The claim for $\mu'_{m,k}$ in (2) is finally obtained using the surjectivity of (\ref{eq:mult-map-product-ideal}).
		
		\emph{Step 4: $\F(mA)$ is generated by global sections on $X-Y$ assuming the hypothesis in (1).}\\
		The line bundle $\mu^\ast A-E$ is ample on $\wX,$ therefore the vector bundle 
		\[
		\mu^\ast(\F((m+k)A))\o\O_{\w{X}}(-kE)\cong \mu^\ast(\F(mA))\o\O_{\w{X}}(\mu^\ast A-E)^{\o k}
		\]	
		is generated by global sections for every $k\gg0.$ This amounts to say that the morphism
		\[
		\mathrm{ev}_k\colon H^0(\w{X},\mu^\ast(\F(mA))\o\O_{\w{X}}(\mu^\ast A-E)^{\o k})\o\O_{\w{X}}\rightarrow \mu^\ast(\F(mA))\o\O_{\w{X}}(\mu^\ast A-E)^{\o k}
		\]
		is surjective for every $k\gg0.$ Moreover, for $k\gg0$ we also have the isomorphism 
		\[
		H^0(\w{X},\mu^\ast(\F((m+k)A))\o\O_{\w{X}}(-kE))\cong H^0(X,\F((m+k)A)\o\J^k)
		\]
		obtained by combining \cite[Lemma 5.4.24]{lazarsfeld2017positivity} and projection formula. Fix $k\gg0$ such that both properties hold. Since $\mu_U=\mu_{|\mu^{-1}(U)}\colon\mu^{-1}(U)\rightarrow U$ is an isomorphism, the functor $(\mu_U)_\ast$ preserves surjectivity. Therefore, we can form the following commutative diagram on $U$:
		\[
		\begin{tikzcd}
			H^0(X,\F(mA))\o (kA)_{|U}\arrow[rr] &&  \F((m+k)A)_{|U}\cong\left(\F((m+k)A)\o\J^k\right)_{|U}\\ 
			&&\\
			H^0(X,\F(mA))\o H^0(X,kA)\o\O_U\arrow[uu]\arrow[rr, "\mu_{m,k}\o\mathrm{id}_{\O_U}"]&& H^0(X,\F((m+k)A)\o\J^k)\o\O_U.\arrow[uu, "(\mu_U)_\ast(\mathrm{ev}_k)"]
		\end{tikzcd}
		\]
		The map $\mu_{m,k}\o\mathrm{id}_{\O_U}$ is surjective by Step 3, as well as $(\mu_U)_\ast(\mathrm{ev}_k)$ by the above discussion. Then the commutativity forces the map 
		\[
		H^0(X,\F(mA))\o(kA)_{|U}\rightarrow \F((m+k)A)_{|U}
		\] 
		to be surjective. Twisting by the sheaf $((-k)A)_{|U},$ we finally obtain the claim.

		\emph{Step 5: we prove (1)}\\
		Combining Steps 2 and 4 we get the case $k=0.$ The remaining cases are obtained in the same way using that $\F$ is $(m+k)$-regular for every $k\geq1$ by Step 1.
	\end{proof}
	
	We are now ready to prove Theorem \ref{thm:main}. 
	
	\begin{proof}[Proof of Theorem \ref{thm:main}]
		Write $\phi=\phi_B$ and let $\mu\colon\wX\to X$ be the blow-up at $x$ with exceptional divisor $E.$ Set $\wB=\mu^\ast B-E$ and let $Y\subset\wX$ be its base scheme. 
		
		Suppose $x$ satisfies (a) and (b). We show that the pair $(\wX,\wB)$ satisfies the hypotheses in Proposition \ref{prop:castelnuovo-mumford}. First of all we observe that $\wB$ is ample on $\wX$: $\phi$ is unramified at $x,$ so the schematic fibre over $\phi(x)$ is the disjoint union $\phi^{-1}(\phi(x))=\{x\}\bigsqcup Z$ with $x$ being a smooth point for $\phi^{-1}(\phi(x)),$ 
		whence  Proposition \ref{prop:seshadri-multipoint}(3) yields $\eps(B;x)\geq\eps(B;\phi^{-1}(\phi(x)))>1.$ Thus Lemma \ref{lem:7.1} give the assertion. Moreover, 
		the base scheme $Y$ is the strict transform of $Z$ under $\mu,$ in particular it is $0$-dimensional by the finiteness of $\phi.$ Now, consider the blow-up $\rho\colon X'\to \wX$ of $\wX$ along $Y$ with exceptional divisor $E_Y.$ Then the line bundle $\rho^\ast\wB-E_Y$ is base-point-free by construction. On the other hand, the composition $\pi=\rho\circ\mu\colon X'\to X$ is the blow-up of $X$ along $\phi^{-1}(\phi(x))$ and its exceptional divisor is $\b E=E'+E_Y,$ where $E'$ is the strict transform of $E$ via $\rho.$ If we show that $\rho^\ast \wB-E_Y=\pi^\ast B-\b E$ is also strictly nef, then it will be ample by \cite[Corollary 1.2.15]{lazarsfeld2017positivity}, completing the claim. For, let $\b C\subset X'$ be an irreducible curve. If $\b E\cdot\b C<0,$ then $\b C\subset\b E$ and it is contracted by $\pi$ to a point. Therefore we have
		\[
		(\pi^\ast B-\b E)\cdot\b C=-\b E\cdot \b C>0.
		\] Now suppose $\b E\cdot\b C\geq0.$ As $B$ is ample, by Lemma \ref{lem:seshadri-subscheme2} we can find $\b\s>1$ such that $\pi^\ast B-\b\s\b E$ is ample. Then we get
		\[
		(\pi^\ast B-\b E)\cdot\b C=(\pi^\ast B-\b\s\b E)\cdot\b C+(\b\s-1)\b E\cdot\b C>0,
		\] hence proving the strictly nefness.
		
		Now, for every integer $0\leq s\leq n-1,$ we have $R^j\mu_\ast\O_{\w{X}}(sE)=0$ for $j>0$ and $\mu_\ast\O_{\w{X}}(sE)=\O_{X},$ see e.g. \cite[Proof of Lemma 4.1]{bertram1991vanishing}. Combining $0$-regularity with respect to $B,$ projection formula \cite[Exercise III.8.3]{hartshorne2013algebraic} and Leray spectral sequence \cite[Exercise III.8.1]{hartshorne2013algebraic}, we obtain 
		\[
		H^i(\w{X},(\mu^\ast\E\o\O_{\w{X}}(-E))(-i\w{B}))=H^i(\w{X},\mu^\ast(\E(-iB))\o\O_{\w{X}}((i-1)E))\cong H^i(X,\E(-iB))=0
		\] for every $i>0.$ So $\w{\E}=\mu^\ast\E\o\O_{\w{X}}(-E)$ is $0$-regular with respect to the ample divisor $\w{B}.$ Since $\rho^\ast\wB(-E_Y)$ is ample, it follows from Proposition \ref{prop:castelnuovo-mumford}(i) that $\w{\E}$ is generated by global sections. Then $c_1(\w{\E})=\mu^\ast c_1(\E)-rE$ is base-point-free, so nef. By Kleiman's theorem \cite[Theorem 1.4.9]{lazarsfeld2017positivity} and \cite[Lemma 5.1.10]{lazarsfeld2017positivity}, we deduce that
		\[
		0\leq (\mu^\ast(\E)-rE)^k\cdot\w{Z}=c_1(\E)^k\cdot Z+r^k(-1)^k(-1)^{k+1}\mult_x(Z)=c_1(\E)^k\cdot Z-r^k\mult_x(Z),
		\] where $\w Z\subset \w X$ is the strict transform of a subvariety $Z\subset X$ of dimension $k\geq1$ passing through $x.$ This proves (\ref{eq:ulrich-c1}).
		
		To prove the last part of the statement, suppose $X$ is not generically covered by $1$-Seshadri curves for $\phi.$ By Remark \ref{rmk:covered-1-seshadri}, we can find a point $x\in X$ such that $\phi^{-1}(\phi(x))$ is smooth, which means that $x\notin\Ram(\phi),$ having $\eps(B;\phi^{-1}(\phi(x)))>1.$ 
		Then, choosing $Z=X$ in (\ref{eq:ulrich-c1}), we immediately get (\ref{eq:ulrich-bigness}). The vector bundle $\w\E=(\mu^\ast\E)(-E)$ is generated by global sections, hence nef. Then Remark \ref{rmk:seshadri-vector}(b) tells that $\eps(\E;x)\geq1,$ which means $x\notin\B_+(\E)$ by Remark \ref{rmk:augmented-vector}. By Theorem \ref{thm:V-big}, $\E$ is V-big as required. To conclude, suppose that $\E$ is $B$-Ulrich of rank $r\geq 2.$ In particular it is $0$-regular by Lemma \ref{lem:claim}(i). Note that due to (b), $\O_X$ cannot be $B$-Ulrich:  if this was true, then we would have $(X,B)\cong(\P^n,\OPn(1))$ (Lemma \ref{lem:claim}(ix)) leading to a contradiction since $\eps(\P^n,\OPn(1),1)=1.$ Consequently, a decomposition as $\E\cong\O_X^{\oplus(r-1)}\oplus\det(\E)$ is not admissible \cite[Remark 2.4.2]{beauville2018introduction}. Therefore \cite[Theorem 1]{sierra2009degree} and Lemma \ref{lem:claim}(ii) imply that $c_1(\E)^n\geq h^0(X,\E)-r=r(d-1),$ completing the proof.
	\end{proof}
	
	Observe that the hypothesis of Theorem \ref{thm:main} can be equivalently stated as follows.
	
	\begin{rmk}\label{rmk:equivalent-conditions-7.2}
		Let $x\in X$ be a point in a smooth projective variety and let $B$ a globally generated ample line bundle on $X.$ Consider the following conditions:
		\begin{itemize}
			\item[(a)] $x\notin\Ram(\phi_B).$
			\item[(b)] $\eps(X,B;\phi_B^{-1}(\phi_B^{\hspace{0.0000001mm}}(x)))>1.$
			\item[(c)] $\eps(X,B;x)>1.$
			\item[(d)] $\eps(\wX,\mu^\ast B-E;\Bs(|\mu^\ast B-E|))>1,$ where $\mu\colon \wX\to X$ is the blow-up at $x$ with exceptional divisor $E.$
		\end{itemize}
		Then (a)+(b) is equivalent to (a)+(c)+(d).
		
		In order to see this, let's first set $\phi=\phi_B,$ $\wB=\mu^\ast B-E$ and $Y=\Bs(|\w B|).$ Henceforth we suppose that (a) holds. This says that $Y$ is the strict transform under $\mu$ of the closed subscheme $Z\subset\phi^{-1}(\phi(x))\subset X,$ which does not contain $x$ and such that $\phi^{-1}(\phi(x))=\{x\}\bigsqcup Z.$  Let $\rho\colon X'\to X$ be the blow-up of $\wX$ with exceptional divisor $E_Y.$ Set $E'$ to be the strict transform of $E$ via $\rho.$ Then the composition $\pi=\rho\circ\mu\colon X'\to X$ is the blow-up of $X$ along $\phi^{-1}(\phi(x))$ with exceptional divisor $\b E=E'+E_Y.$ In particular we have $\rho^\ast\wB-E_Y=\pi^\ast B-\b E.$ Recall that $\O_{\b E}(-\b E)$ is ample since the centre of the blow-up $\pi$ is $0$-dimensional (Remark \ref{rmk:exceptional}). 
		
		Suppose (c) and (d) hold. Then (c) says that $\wB$ is ample (Lemma \ref{lem:7.1}). We claim that $\rho^\ast \wB-E_Y$ is strictly nef. Let $C'\subset X'$ be an irreducible curve. If $E_Y\cdot C'<0,$ then $C'\subset E_Y,$ in particular it is contracted by $\rho$ to a point, whence
		\[
		(\rho^\ast \wB-E_Y)\cdot C'=-E_Y\cdot C'>0.
		\] Suppose $E_Y\cdot C'\geq0.$ Since $\eps(\wX,\wB;Y)>1,$ by Lemma \ref{lem:seshadri-subscheme2} we can find $\s>1$ such that $\rho^\ast\wB-\s E_Y$ is ample. (Here we are using the ampleness of $\wB.$)  Then we get
		\[
		(\rho^\ast\wB-E_Y)\cdot C'=(\rho^\ast\wB-\s E_Y)\cdot C'+(\s-1)E_Y\cdot C'>0.
		\] This proves the claim. Then it follows by \cite[Corollary 1.2.15]{lazarsfeld2017positivity} that $\rho^\ast\wB-E_Y=\pi^\ast B-\b E$ is ample. Thanks to the openness of the ample cone, this implies that $\eps(X,B;\phi^{-1}(\phi(x)))>1$ (see also the proof of Lemma \ref{lem:7.1-2}), that is (b).
		
		Conversely, assume that (b) holds. Then (c) follows from (b) (and (a)) by Proposition \ref{prop:seshadri-multipoint}(3). If we show that $\pi^\ast B-\b E=\rho^\ast\wB-E_Y$ is ample, then $\eps(\wX,\wB;Y)>1$ (see the proof of Lemma \ref{lem:7.1-2}). Therefore (d) will follow once we have proved the ampleness of $\pi^\ast B-\b E=\rho^\ast\wB-E_Y.$ As it is globally generated, by \cite[Corollary 1.2.15]{lazarsfeld2017positivity} we only need to show it is strictly nef. Let $\b C\subset X'$ be an irreducible curve. If $\b E\cdot\b C<0,$ then $\b C\subset\b E$ and it is contracted to a point via $\pi.$ Therefore we have
		\[
		(\pi^\ast B-\b E)\cdot\b C=-\b E\cdot \b C>0.
		\] Now suppose $\b E\cdot\b C\geq0.$ As $B$ is ample, by Lemma \ref{lem:seshadri-subscheme2} we can find $\b\s>1$ such that $\pi^\ast B-\b\s\b E$ is ample. Then we get
		\[
		(\pi^\ast B-\b E)\cdot\b C=(\pi^\ast B-\b\s\b E)\cdot\b C+(\b\s-1)\b E\cdot\b C>0,
		\] completing the proof.
	\end{rmk}
	
	This observation leads to the following geometrical interpretation.
	
	\begin{rmk}
		Let $x\in X$ be a point in a smooth projective variety and let $B$ a globally generated ample line bundle on $X.$ Consider conditions (a)-(b)-(c)-(d) in Theorem \ref{thm:main} and Remark \ref{rmk:equivalent-conditions-7.2}. In light of Lemmas \ref{lem:7.1} - \ref{lem:7.1-2} and of Remark \ref{rmk:seshadri-B-line}, (a)+(c) is equivalent to say that there is no $B$-line passing through $x.$
	\end{rmk}
	
	\begin{rmk}
		Let $H$ be a very ample line bundle on a smooth projective variety $X.$ The morphism $\phi=\phi_H$ is an embedding,  hence $\phi^{-1}(\phi(x))=\{x\}$ is smooth. Then, regarding Theorem \ref{thm:main}: (a) is always satisfied and (b) is equivalent to say that there is no line passing through $x$ (Lemmas \ref{lem:7.1} - \ref{lem:7.1-2} and Remark \ref{rmk:seshadri-B-line}). Thus the statement of Theorem \ref{thm:main} reduces to \cite[Theorem 7.2 \& Theorem 1]{lopez2022positivity}. 
	\end{rmk}
	
	\begin{rmk}\label{rmk:necessity-main}
		Let $X$ be a smooth projective variety of dimension $n\geq 1$ and let $B$ an ample line bundle which is generated by global sections. In order to satisfy (b) in Theorem \ref{thm:main}, the linear series $|B|$ cannot induce a finite morphism $\phi=\phi_B\colon X\rightarrow\P^n$ onto the projective space. 
		
		Indeed, if this happens, projection formula implies $d:=B^n=\deg(\phi)\cdot\deg(\P^n)=\deg(\phi).$ Then, if $\phi(x)$ is not a branch point, the fibre over $\phi(x)$ consists of $\deg(\phi)=d$ distinct smooth points. 
		Then (\ref{eq:seshadri-submaximal}) says that
		\[
		\eps(X,B;\phi^{-1}(\phi(x)))\leq\left(\frac{B^n}{d}\right)^\frac{1}{n}=1,
		\] forcing $\eps(X,B;\phi^{-1}(\phi(x)))=1$ (Lemma \ref{lem:7.1-2}).
	\end{rmk}
	
	Let's see an example.
	
	\begin{ex}\label{ex:abelian}
		Let $(S,B)$ be a polarized abelian surface of type $(2,2)$ which we assume to be non-isomorphic to the product of two elliptic curves. In virtue of \cite[Exercise 8.11(1)]{birkenhake2004complex}, this is the general case. According to \cite[§10.1, p. 282]{birkenhake2004complex}, the line bundle $B$ is globally generated and $B=L^{\o 2}$ for an ample line bundle $L.$ As $S$ does not split, $B$ induces a morphism $\phi_B\colon S\to K\subset\P^3$ of degree $2$ onto its image $K,$ which is a Kummer surface.
		
		Once clarified the setting, we can observe that the argument in \cite[Theorem 1]{beauville2016ulrich} works also in the case of an ample and globally generated polarization. Therefore $S$ supports a $B$-Ulrich vector bundle $\E$ of rank $2.$ Since $\eps(L)\geq 4/3$ \cite[Theorem 6.4.4(a)]{bauer2009primer}, by (\ref{eq:seshadri-lower}) and Proposition \ref{prop:seshadri-properties}(1) we obtain
		\[
		\eps(B;x_1,x_2)\geq\frac{1}{2}\eps(B)=\eps(L)\ge \frac{4}{3}>1
		\]
		for every pair of distinct points $x_1,x_2\in S.$ This holds in particular for all pairs of points lying on the fibre over $\phi_B(x)$ with $x\notin\Ram(\phi_B).$ Therefore $\E$ is $V$-big by Theorem \ref{thm:main}.
	\end{ex}

	The conditions in Theorem \ref{thm:main} are easy to handle when the polarization defines a birational morphism.
	
	\begin{rmk}
		Let $(X,B)$ is a smooth polarized projective variety with $B$ generated by global sections such that $\phi_B\colon X\rightarrow\P^N$ is birational onto its image. If $X$ is not covered by $1$-Seshadri curves for $B,$ then every vector bundle which is $0$-regular with respect to $B$ is V-big.

		Indeed, given a point $x\in X$ not belonging to any $1$-Seshadri curve for $B$ at $x,$ that is $\eps(B;x)>1$ (Lemma \ref{lem:7.1}), by \cite[Lemma 1.4]{ein1995local} we can find a dense open subset $U\subset X$ where $\phi_B$ is an isomorphism and $\eps(B;y)>1$ for every $y\in U.$ Then the assertion follows from Theorem \ref{thm:main}.
	\end{rmk}
	
	\begin{ex}
		Let $(S,B)$ be polarized abelian surface of type $(1,4)$ which does not split as the product of two elliptic curves. Then \cite[Exercise 8.11(1)]{birkenhake2004complex} says that this is the general case. Then $\phi_B\colon S\rightarrow\b S\subset\P^3$ is a morphism which is birational onto a singular octic surface $\b S$ \cite[§10.5, p. 302, lines 19-24]{birkenhake2004complex}. We also know that $S$ supports a $B$-Ulrich vector bundle $\E$ (see Example \ref{ex:abelian}). As $\eps(B;1)=\eps(B)\ge 4/3$ by \cite[Theorem 6.4.4(a)]{bauer2009primer}, $\E$ is V-big by the above Remark.
	\end{ex}
	
	The conditions in Theorem \ref{thm:main} are not necessary for the bigness of an Ulrich bundle. Combining \cite[Proof of Theorem 1 \& Remark 2.2]{lopez2021classification}, we get the following characterization of big $B$-Ulrich vector bundles on surfaces.
	
	\begin{lemma}\label{rmk:big-ulrich-surfaces}
		Let $(S,B)$ a smooth projective surface together with a base-point-free ample divisor and let $\E$ be a $B$-Ulrich vector bundle. Then $\E$ is big if and only if $c_1(\E)^2>0.$
	\end{lemma}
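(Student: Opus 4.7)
The plan is to reduce bigness of $\E$ to a Chern-class inequality on $S$ via the standard Segre-class computation, dispatch one direction with Fulton--Lazarsfeld positivity, and import the other from the classification of non-big Ulrich bundles.

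First, since $\E$ is $B$-Ulrich, Lemma \ref{lem:claim}(i) gives that $\E$ is globally generated, so $\xi:=c_1(\O_{\P(\E)}(1))$ is nef on the $(r+1)$-dimensional variety $\P(\E)$. Bigness of $\E$ means bigness of $\O_{\P(\E)}(1)$, which, by nefness of $\xi$, is equivalent to $\xi^{r+1}>0$. I would then compute this top self-intersection via the Grothendieck relation $\xi^{r}=\pi^{\ast}c_1(\E)\cdot\xi^{r-1}-\pi^{\ast}c_2(\E)\cdot\xi^{r-2}$ (on a surface all higher $c_i(\E)$ vanish for dimension reasons); multiplying by $\xi$ and pushing forward via $\pi_{\ast}\xi^{r-1}=1$ and $\pi_{\ast}\xi^{r-2}=0$ yields
\[ \xi^{r+1}=c_1(\E)^2-c_2(\E). \]
The lemma therefore reduces to the equivalence $c_1(\E)^2-c_2(\E)>0\iff c_1(\E)^2>0$; the case $r=1$ is trivial since $c_2(\E)=0$.

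The forward implication is easy: since $\E$ is nef, Fulton--Lazarsfeld positivity for nef bundles gives $c_2(\E)\geq 0$, so $c_1(\E)^2-c_2(\E)>0$ immediately forces $c_1(\E)^2>c_2(\E)\geq 0$.

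For the converse I would argue by contrapositive: if $\E$ is not big, then $c_1(\E)^2-c_2(\E)=0$ (the nef top self-intersection on $\P(\E)$ is a non-negative integer), and I would show this forces $c_1(\E)^2=0$. This is the conclusion of \cite[Proof of Theorem 1]{lopez2021classification}, whose argument combines the $\mu$-semistability of Ulrich bundles (Lemma \ref{lem:claim}(viii)) and the resulting Bogomolov inequality with the Ulrich Chern-class identities of Lemma \ref{lem:claim}(iii)-(iv), refined by a Hodge-index comparison against $B$ on $S$. The main obstacle is that Lopez's statements were originally written for very ample polarizations, while here $B$ is only ample and base-point-free; this is precisely the point addressed by \cite[Remark 2.2]{lopez2021classification}, which observes that the ingredients used—the Ulrich Chern-class identities, Bogomolov's inequality, Fulton--Lazarsfeld positivity, and nefness of $c_1(\E)$—depend only on the base-point-freeness and ampleness of $B$, so the argument transfers without modification. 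Assembling these three paragraphs proves the equivalence.
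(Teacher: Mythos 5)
Your reduction to $\xi^{r+1}=c_1(\E)^2-c_2(\E)$ and your proof of the forward implication are correct: global generation gives nefness, bigness of a nef class is positivity of the top self-intersection, and Fulton--Lazarsfeld gives $c_2(\E)\geq 0$, so $c_1(\E)^2\geq c_1(\E)^2-c_2(\E)>0$. This is in substance the content of \cite[Remark 2.2]{lopez2021classification}, which the paper simply cites for necessity, so up to here you and the paper agree.

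The converse is where there is a genuine gap. You have reduced it to the claim that a $B$-Ulrich bundle with $c_1(\E)^2=c_2(\E)$ must have $c_1(\E)^2=0$, and you discharge this entirely onto ``the conclusion of \cite[Proof of Theorem 1]{lopez2021classification}'', asserting that \cite[Remark 2.2]{lopez2021classification} guarantees the argument transfers from very ample to merely ample and base-point-free polarizations ``without modification''. That attribution is wrong: Remark 2.2 is the Segre-class bigness criterion for globally generated bundles (it is what the paper invokes for necessity, and again in Example \ref{ex:DP-threefold} via $s_3(\E^\ast)$); it says nothing about the transferability of the classification proof. More seriously, the transfer is exactly the delicate point: this paper stresses (Introduction and Examples \ref{ex:DP-surface}--\ref{ex:DP-threefold}) that the classification of \cite[Theorems 1--2]{lopez2021classification} does \emph{not} extend to the base-point-free setting, since new non-big Ulrich bundles appear, so one cannot wave at that proof without checking which steps survive. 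Nor does your one-line sketch close the gap: Bogomolov for the $\mu$-semistable bundle $\E$ gives $2rc_2(\E)\geq (r-1)c_1(\E)^2$, which combined with $c_2(\E)=c_1(\E)^2$ yields only $(r+1)c_1(\E)^2\geq 0$ --- vacuous, since $c_1(\E)$ is nef. The paper avoids all of this by a different, self-contained route for sufficiency: if $c_1(\E)^2>0$ then $c_1(\E)$ is big and nef, whence $B^2\geq 2$ (otherwise Lemma \ref{lem:degree-1} forces $(S,B,\E)\cong(\P^2,\O_{\P^2}(1),\O_{\P^2}^{\oplus r})$, contradicting bigness of $c_1$), so $h^0(S,\E)=rB^2\geq r+2$ by Lemma \ref{lem:claim}(ii) and $H^1(S,\det(\E)^\ast)=0$ by Kawamata--Viehweg vanishing; then the bigness criterion of \cite[Theorem 3.2]{bini2020big} applies. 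You would need either to supply this (or an equivalent) argument, or to actually verify, step by step, that the relevant portion of the proof of \cite[Theorem 1]{lopez2021classification} uses only base-point-freeness.
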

	\begin{proof}
		As $\E$ is globally generated (Lemma \ref{lem:claim}(i)), this is clear if $\rk(\E)=1.$ Thus we  can assume $r=\rk(\E)\geq2.$ Necessity follows by \cite[Remark 2.2]{lopez2021classification}. For sufficiency, we know by hypothesis that $c_1(\E)$ is big and nef. Hence $B^2>1,$ for otherwise Lemma \ref{lem:degree-1} would imply $(S,B,\E)\cong(\P^2,\O_{\P^2}(1), \O_{\P^2}^{\oplus r})$ giving a contradiction as the $c_1$ would not be big. In particular, by Lemma \ref{lem:claim}(ii), $h^0(X,\E)=r\cdot B^2\geq r+2,$ and $H^1(X,\det(\E)^\ast)=0$ by \cite[Theorem 4.3.1]{lazarsfeld2017positivity}. Then \cite[Theorem 3.2]{bini2020big} implies that $\E$ is big.
	\end{proof}
	
	\begin{ex}
		Let $\phi=\phi_B\colon S\rightarrow \P^2$ be a finite double cover branched along a general smooth sextic with $\O_S(B)\cong\phi^\ast\O_{\P^2}(1).$ It easily follows, for instance by \cite[Lemmas 17.1-17.2]{barth2015compact}, that $(S,B)$ is a smooth K3 surface of genus $g=2.$  Moreover $S$ supports a special $B$-Ulrich vector bundle $\E$ of rank $2$ \cite[Theorem 1.2]{sebastian2022rank}. However by Remark \ref{rmk:necessity-main}, this pair $(S,B)$ cannot satisfy condition (b) of Theorem \ref{thm:main}.  
		On the other hand,  as $c_1(\E)^2=(3B)^2>0,$ $\E$ is big by Lemma \ref{rmk:big-ulrich-surfaces}.
	\end{ex}
	
	More generally, for minimal surfaces of non-negative Kodaira dimension we have the following.
	
	\begin{rmk}
		Let $S$ be a minimal smooth projective surface of Kodaira dimension $\kappa(S)\ge 0$ and let $B$ be a globally generated ample line bundle on $S.$ Then every special $B$-Ulrich bundle of rank $2$ is big. In particular, if $\Pic(S)\cong\Z,$ every $B$-Ulrich bundle of rank $2$ is big.
		
		Let's prove this. In case $\Pic(S)\cong\Z,$ we can deduce from Lemma \ref{lem:claim}(iii) that every $B$-Ulrich bundle $\F$ of rank $2$ has $c_1(\F)=K_S+3B,$ saying that it is special for $B.$ (See also \cite[Lemma 3.2]{lopez2022positivity}.) Thus we only need to show the first part. 
		Since $K_S$ is nef by hypothesis, the Chern class $c_1(\E)=K_S(3B)$ is ample for every special $B$-Ulrich bundle $\E$ of rank $2.$ Hence $c_1(\E)^2>0$ by Nakai-Moishezon-Kleiman criterion \cite[Theorem 1.4.9]{lazarsfeld2017positivity}, and the conclusion follows by Lemma \ref{rmk:big-ulrich-surfaces}.
	\end{rmk}
	
	In this slightly wider setting in which we consider Ulrich bundles with respect to ample and globally generated line bundles, there are more non-big Ulrich bundles. In fact, the following examples are not ascribable to the the list of non-big Ulrich bundles (with respect to a very ample divisor) on surfaces and threefolds in \cite[Theorems 1 \& 2]{lopez2021classification}.
	
	\begin{ex}\label{ex:DP-surface}
		Let $(S,-K_S)$ be a Del Pezzo surface of degree $d=2$ and let $\E=L(-K_S)$ be the $-K_S$-Ulrich line bundle of Proposition \ref{prop:ulrich-DP-surface}. Despite $\eps(-K_S;x)=4/3$ for a general point $x\in S$ \cite[Theorem 6.3.4]{bauer2009primer}, we cannot apply Theorem \ref{thm:main} because we are in the situation of Remark \ref{rmk:necessity-main}. Indeed, we have $c_1(\E)^2=0$ by construction. Thus $\E,$ which is already nef, is not big.
	\end{ex}
	
	\begin{ex}\label{ex:DP-threefold}
		Consider a Del Pezzo threefold $(X,B)$ of degree $d=2$ and let $\E$ be the special $B$-Ulrich bundle of rank $2$ constructed in Proposition \ref{prop:ulrich-DP-threefold}. We are in the situation of Remark \ref{rmk:necessity-main}, hence we cannot apply Theorem \ref{thm:main}. Due to $\E$ is globally generated (Lemma \ref{lem:claim}(i)), to show that $\E$ is non-big, it is enough to check that $s_3(\E^\ast)=0$ \cite[Remark 2.2]{lopez2021classification}. Using \cite[Examples 8.3.4-8.3.5]{lazarsfeld2017positivity2} and the fact that $\E$ is special of rank $2,$ we get
		\[
		s_3(\E^\ast)=s_{(1,1,1)}(\E)=c_1(\E)^3-2c_1(\E)\cdot c_2(\E)+c_3(\E)=(2B)^3-2(2B)\cdot c_2(\E)=16-4c_2(\E)\cdot B.
		\] To compute the last intersection product, we use Lemma \ref{lem:claim}(iv):
		\begin{align*}
			c_2(\E) \cdot B &=\frac{1}{2}\left[c_1(\E)^2-c_1(\E)\cdot K_X\right]\cdot B+\frac{2}{12}\left[K_X^2+c_2(X)-\frac{44}{2}B^2\right]\cdot B\\
			&=\frac{1}{2}\left[(2B)^2\cdot B-(2B)\cdot(-2B)\cdot B\right]+\frac{1}{6}\left[(-2B)^2\cdot B+c_2(X)\cdot B-22(B)^3\right]\\
			&=8+\frac{1}{6}\left[-36+c_2(X)\cdot B\right].
		\end{align*}
		Applying  Hirzebruch-Riemann-Roch theorem on $\O_X,$ one finds that $c_2(X)\cdot B=12.$ 		In conclusion, we obtain
		\[
		s_3(\E^\ast)=16-4\left[8+\frac{1}{6}\left(12-36\right)\right]=16-4\cdot 4=0,
		\] giving the assertion.
	\end{ex}
	
	\section{Augmented base locus of an Ulrich vector bundle}
	The main goal of this section is characterizing the augmented base locus of a $B$-Ulrich bundle (Theorem \ref{thm:ulrich-augmented}). As said in the Introduction, we will need an additional hypothesis on our globally generated polarization $B$: we will suppose that $|B|$ admits a linear subsystem $|V|\subset |B|$ inducing a morphism which is étale onto the schematic image. In light of the previous section the assumption of being unramified is reasonable, in fact it turns out to be fundamental: in Remarks \ref{rmk:DP-surface} - \ref{rmk:DP-threefold} we show that Theorem \ref{thm:ulrich-augmented} and Corollary \ref{cor:ulrich-augmented} no longer apply when $\phi_V$ is flat but not unramified (on the schematic image).\\
	
	A $B$-Ulrich bundle is globally generated, hence nef. In particular the stable and the restricted base loci are empty \cite[Proposition 5.2]{bauer2015positivity}. In view of Remark \ref{rmk:augmented-vector}, to determine its augmented base locus, it suffices to characterize the points where the Seshadri constant vanishes.
	
	We begin with a result of general nature.
	
	\begin{lemma}\label{lem:step1}
		Let $X$ be a smooth projective variety and let $\E$ be a globally generated vector bundle on $X.$ Suppose there is an integral rational curve $C\subset X$ such that $\E_{|C}$ is not ample. Then $\eps(\E;x)=0$ for every $x\in C.$
	\end{lemma}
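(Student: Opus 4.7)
The plan is to construct, for each $x \in C$, an irreducible curve $D \subset \P(\E)$ belonging to $\mathcal{C}_{\E,x}$ (Definition \ref{def:seshadri-vector}) and satisfying $\xi \cdot D = 0$, where $\xi = \O_{\P(\E)}(1)$. Since $\E$ is globally generated and hence nef, Remark \ref{rmk:seshadri-vector}(a) gives $\eps(\E;x) \geq 0$; combined with the positivity of $\mult_x(\pi_\ast D)$, producing such a $D$ will force the infimum defining $\eps(\E;x)$ to be zero.

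To construct $D$, I would first pass to the normalization $\nu\colon \P^1 \to C$ (using that $C$ is integral and rational) and set $\psi\colon \P^1 \to X$ to be the composition of $\nu$ with $C \hookrightarrow X$. By Grothendieck's splitting theorem, $\psi^\ast \E \cong \bigoplus_{i=1}^r \O_{\P^1}(a_i)$ with all $a_i \geq 0$ (the pullback of a globally generated sheaf is globally generated on $\P^1$) and at least one $a_j = 0$ (ampleness is preserved by finite pullback, so $\psi^\ast \E$ fails to be ample because $\E_{|C}$ does). Projecting onto the $j$-th summand gives a surjection $\psi^\ast \E \twoheadrightarrow \O_{\P^1}$, hence a section $\sigma\colon \P^1 \to \P(\psi^\ast \E)$ with $\sigma^\ast \O(1) = \O_{\P^1}$. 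Composing $\sigma$ with the natural base-change morphism $\P(\psi^\ast \E) \to \P(\E)$ yields a map $j\colon \P^1 \to \P(\E)$ with $\pi \circ j = \psi$ and $j^\ast \xi = \O_{\P^1}$.

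I would then set $D = j(\P^1)$ and verify the following: $D$ is irreducible and one-dimensional because $\pi \circ j = \psi$ has image $C$; for any $x \in C$ the curve $D$ meets $\pi^{-1}(x)$ but is not contained in it, hence $D \in \mathcal{C}_{\E,x}$. Denoting by $e \geq 1$ the degree of $j\colon \P^1 \to D$, the projection formula gives $\xi \cdot D = \deg(j^\ast \xi)/e = 0$, while $\pi_\ast D$ is a positive multiple of $[C]$, so $\mult_x(\pi_\ast D) > 0$. Substituting into Definition \ref{def:seshadri-vector} yields $\eps(\E;x) \leq 0$, and combined with the lower bound from nefness one gets $\eps(\E;x) = 0$.

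The main point that requires care is the convention on $\P(\E)$: one needs the version parameterizing rank-one quotients, so that a surjection $\psi^\ast \E \twoheadrightarrow \O_{\P^1}$ corresponds to a section $\sigma$ pulling $\O(1)$ back to $\O_{\P^1}$; this is exactly the convention under which nefness of $\E$ matches nefness of $\xi$, which is implicit throughout the paper. Once that is fixed, there is no real obstacle — the argument is a direct unpacking of Definition \ref{def:seshadri-vector}.
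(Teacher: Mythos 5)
Your proof is correct. The first half coincides with the paper's argument: both pass to the normalization $\nu\colon\P^1\to C$, invoke Grothendieck's splitting theorem to write $\nu^\ast\E_{|C}\cong\bigoplus\O_{\P^1}(a_i)$ with all $a_i\geq0$ by global generation, and use that ampleness is detected after pullback along the finite map $\nu$ to force some $a_j=0$. Where you diverge is in converting the trivial summand into the vanishing of the Seshadri constant. The paper stays at the level of restricted Seshadri constants: it cites \cite[Lemma 3.31]{fulger2021seshadri} to get $\eps(\nu^\ast\E_{|C};y)=0$ on $\P^1$, then the rescaling formula $\eps(\E_{|C};x)=\eps(\nu^\ast\E_{|C};y)/\mult_x(C)$ and the identity $\eps(\E;x)=\inf_{x\in F}\eps(\E_{|F};x)$ to descend to $X$. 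You instead unpack Definition \ref{def:seshadri-vector} directly: the quotient $\psi^\ast\E\twoheadrightarrow\O_{\P^1}$ gives a section of $\P(\psi^\ast\E)=\P(\E)\times_X\P^1$ pulling $\O(1)$ back trivially, whose image $D\subset\P(\E)$ lies in $\mathcal{C}_{\E,x}$ for every $x\in C$ and satisfies $\xi\cdot D=0$ with $\mult_x(\pi_\ast D)>0$. This is more self-contained (it replaces three citations to \cite{fulger2021seshadri} with an explicit witness curve), at the cost of having to be careful about the quotient convention for $\P(\E)$, which you correctly flag; your curve $D$ is in effect the geometric content behind the lemma the paper cites. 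One tiny remark: since $\nu$ is a normalization it is birational, so your map $j$ is generically injective and $e=1$, though allowing $e\geq1$ does no harm.
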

	\begin{proof}
		Let $\nu\colon C'\to C$ be the normalization of $C$ and let $f=\iota\circ\nu\colon C'\to X$ be the composition of $\nu$ with the inclusion $\iota\colon C\hookrightarrow X.$ Since $C'\simeq\P^1,$ the vector bundle $f^\ast\E=\nu^\ast\E_{|C}$ has rank $r=\rk(\E)$ and splits as 
		\[
		f^\ast\E=\O_{\P^1}(a_1)\oplus\cdots\oplus\O_{\P^1}(a_r),
		\] where $a_1\geq\cdots\geq a_r\geq0$ since $f^\ast\E$ is globally generated. However, $f^\ast\E=\nu^\ast\E_{|C}$ cannot be ample for otherwise $\E_{|C}$ would be so \cite[Proposition 6.1.8(iii)]{lazarsfeld2017positivity2}. In particular, we have (at least) $a_r=0$ \cite[Example 6.1.3]{lazarsfeld2017positivity2}. Then \cite[Lemma 3.31]{fulger2021seshadri} implies that  $\eps(f^\ast\E;y)=0$ for every $y\in C'.$ Consequently $$\eps(\E_{|C};x)=\frac{\eps(\nu^\ast\E_{|C};y)}{\mult_x(C)}=0$$ for any $y\in C'$ and every $x\in C$ (see \cite[p. 12, lines 5-6]{fulger2021seshadri}). For any $z\in X,$ we have $$\eps(\E;z)=\inf_{z\in F\subset X}\eps(\E_{|F};z)$$  where $F$ ranges over all irreducible curves in $X$ passing through $z$ (see, e.g., the proof of \cite[Corollary 3.21]{fulger2021seshadri}). Therefore, we obtain $0\leq\eps(\E;x)\leq\eps(\E_{|C};x)=0$ for every $x\in C,$ which gives the assertion.
	\end{proof}
	
	In this section we will use the following convention.
	
	\begin{notat} Given a projective scheme $X$ and a base-point-free linear series ${|V|}\subset |L|,$  let $\bX$ be the schematic image of $\phi_{V}\colon X\to\P^N,$ that is just the reduced induced subscheme structure on $\phi_{V}(X)\subset\P^N.$ Let $\iota\colon\bX\hookrightarrow\P^N$ denote the inclusion. By construction there is a morphism $\b{\phi_{V}}\colon X\to\bX$ such that
		$$\phi_{V}=\iota\circ\b{\phi_{V}}\colon X\to\bX\hookrightarrow\P^N,$$ which then satisfies $\b{\phi_{V}}\O_{\bX}(1)\cong L.$ Furthermore $\b{\phi_V}$ is finite when $L$ is ample: in this case $\phi_V$ is finite (see, e.g., Remark \ref{rmk:B-line}), hence $\b{\phi_V}$ must be finite as well being proper and quasi finite.	Henceforth we will use this fact without further reference and we will make no distinction between $\phi_{V}$ and $\b{\phi_{V}}.$
	\end{notat}
	
	\begin{rmk}
		The hypotheses of Theorems \ref{thm:ulrich-augmented} - \ref{thm:ulrich-ampleness} and Corollary \ref{cor:ulrich-augmented} are obviously satisfied by very ample linear series. However these are not the only ones:		certainly such line bundles must separate tangents at all point, or equivalently must give rise a local isomorphism, but one can construct several examples of globally generated ample line bundles with a linear system inducing an étale morphism (onto the schematic image) which is not a global isomorphism. Let's proceed as follows. Take a smooth projective variety $Y$ with either $b_1(Y)\neq 0$ or with $H_1(Y,\Z)$ containing $k$-torsion. In both cases we can find a smooth projective variety $X$ and a finite unramified morphism $\pi\colon X\to Y$ which is not an isomorphism \cite[Proposition 18.1]{barth2015compact}, but which is étale by \cite[Exercise III.9.3(a)]{hartshorne2013algebraic}. Then the triple $(X,B,V),$ where $B=\pi^\ast H$ for any very ample line bundle $H$ on $Y,$ and $V=\Imm\left(\pi^\ast\colon H^0(Y,H)\to H^0(X,\pi^\ast H)\right)\simeq H^0(Y,H)$ \cite[Corollaire 2.2.8]{grothendieck1965elements} satisfies those hypotheses since $\phi_V=\phi_H\circ \pi.$ However it may happens that $B$ remains very ample, as in the case of the canonical double cover $\pi\colon X\to Y$ of any Enriques surface $Y\subset \P^M$ embedded through any very ample $H$ \cite[Lemma 3.4]{gallego2008k3}. 
		
		Examples (in any dimension) in which such a $B$ is not very ample can be constructed as follows. Let $\b C\subset\P^2$ be a smooth plane curve of degree $2d+3,$ with $d\geq1,$ and take a line bundle $\vartheta$ on $\b C$ such that $2\vartheta=K_{\b C}$ and $h^0(\b C, \vartheta)=0$ (see \cite[(4.1) \& Remark 4.4]{beauville2000determinantal}). The canonical bundle is $K_{\b C}=\O_{\b C}(2d),$ hence $L=\vartheta(-d)$ is a $2$-torsion element in $\Pic(\b C).$ Then $L$ gives rise to an étale double cover $p\colon C\to \b C$ satisfying $p_\ast\O_C\cong\O_{\b C}\oplus L$ \cite[Lemma 17.2]{barth2015compact}. Setting $B'=p^\ast\O_{\b C}(d)$ and $V'=p^\ast H^0(\b C,\O_{\b C}(d)),$ by projection formula we have 
		\[
		H^0(C,B')\cong H^0(\b C,p_\ast p^\ast \O_{\b C}(d))\cong H^0(\b C,\O_{\b C}(d))\oplus H^0(\b C,\O_{\b C}(d)\o L)=H^0(\b C,\O_{\b C}(d)),
		\]
		saying that $|V'|=|B'|.$ Therefore $B'$ is non-very ample such that $\phi_{B'}=\phi_{\O_{\b C}(d)}\circ p$ is étale (onto the schematic image). This pair $(C,B')$ gives the desired example in dimension $1.$ For higher dimensions, take any smooth projective variety $Z\subset\P^N$ and set $$Y=\b C\times Z,\ H=\pi_1^\ast\left(\O_{\b C}(d)\right)\o\pi_2^\ast \left(\O_Z(1)\right),\ M=\pi_1^\ast\left(\vartheta(-d)\right)\o\pi_2^\ast\left(T\right)$$ 
		where $\pi_1\colon Y\to\b C,\pi_2\colon Y\to Z$ are the projections and $T$ is a (possibly trivial) line bundle on $Z$ such that $T^{\o2}=\O_Z.$ Clearly $H$ is very ample and 	$M$ is of $2$-torsion. Therefore we obtain a smooth projective variety $X$ and an étale double cover $\pi\colon X\to Y$ such that $\pi_\ast\O_X\cong \O_Y\oplus M$ \cite[Lemma 17.2]{barth2015compact}. Setting $B=\pi^\ast H$ and $V=\pi^\ast(H^0(Y,H)),$ then, by observing that Kunneth formula yields $$H^0(Y, H\o M)\cong H^0(Y,\pi_1^\ast(\vartheta)\o\pi_2^\ast (T(1)))\cong H^0(\b C,\vartheta)\o H^0(Z,T(1))=0,$$ we have $$H^0(X,B)\cong H^0(Y,\pi_\ast \pi^\ast H)\cong H^0(Y,H)\oplus H^0(Y,H\otimes M)=H^0(Y,H).$$ We conclude that $|V|=|B|,$ so that $B$ is ample globally generated but non-very ample and such that $\phi_B=\phi_H\circ \pi$ is étale. This is the desired example.	As a final remark, observe that these provides examples of $1$-jet spanned line bundles, i.e. separating $1$-jets at every point (see \cite[p. 273, lines 16-19]{lazarsfeld2017positivity}), which are not very ample.
	\end{rmk}
	
	To prove the main result, we will study the “separation properties” of a $B$-Ulrich bundle $\E.$ More precisely we will consider the restriction map $H^0(X,\E)\to H^0(Z,\E_{|Z}),$ where $Z$ is a $0$-dimensional closed subscheme of length $2.$ There are two possibilities: either $Z$ is contained in a $B$-line or not. Therefore it comes necessary to study also the restriction map to a $B$-line. To do this, we will use the separation lemmas in \cite[§3]{lopez2023geometrical}.

	\begin{lemma}\label{lem:restriction}
		Let $X$ be a smooth projective variety and let $B$ be a globally generated ample line bundle on $X.$ Suppose there is a subspace $V\subset H^0(X,B)$ such that $\phi=\phi_V\colon X\to\phi_V(X)=\bX\subset\P^N$ is étale. Let $\E$ be a $B$-Ulrich bundle on $X.$ Let $L\subset\bX$ be a line and let $\b L=\phi^{-1}(L)\subset X$ be the scheme-theoretic inverse image of $L.$ Then the restriction map $$H^0(X,\E)\to H^0(\b L,\E_{|\b L})$$ is surjective. In particular, the restriction map $$H^0(X,\E)\to H^0(\G,\E_{|\G})$$ is surjective for every $B$-line $\G\subset X.$
	\end{lemma}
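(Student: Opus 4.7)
The ``in particular'' assertion follows once we prove the main one. By Remark \ref{rmk:B-line}, any $B$-line $\G\subset X$ is mapped by $\phi_V$ isomorphically onto a line $L=\phi_V(\G)\subset\bX$. Since $\phi_V$ is étale and $L\cong\P^1$ is simply connected, the scheme-theoretic preimage $\b L=\phi_V^{-1}(L)$ is a finite étale cover of $\P^1$, hence a disjoint union $\bigsqcup_i\G_i$ of copies of $L$, each of which is a $B$-line (since $B\cdot\G_i=\O_{\P^N}(1)\cdot L=1$ by the projection formula). The given $\G$ is one such component $\G_{i_0}$, and composing the surjection $H^0(X,\E)\twoheadrightarrow H^0(\b L,\E_{|\b L})=\bigoplus_i H^0(\G_i,\E_{|\G_i})$ with the projection onto the $i_0$-th factor yields the surjection $H^0(X,\E)\twoheadrightarrow H^0(\G,\E_{|\G})$.

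For the main claim, my plan is to reduce to a very-ample Ulrich separation problem on $\bX$. Since $\phi_V$ is étale, $\bX$ is smooth and $\F:=(\phi_V)_\ast\E$ is a vector bundle on $\bX$; by the projection formula $H^i(\bX,\F(-p))\cong H^i(X,\E(-pB))$ for all $i$ and $p$, so $\F$ is an Ulrich bundle on $\bX\subset\P^N$ with respect to the very ample polarization $\O_{\bX}(1)$. Flat base change along the Cartesian square associated to $L\hookrightarrow\bX$ identifies $H^0(\b L,\E_{|\b L})=H^0(L,\F_{|L})$ and $H^0(X,\E)=H^0(\bX,\F)$, so it suffices to prove that $H^0(\bX,\F)\to H^0(L,\F_{|L})$ is surjective. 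Since $\F$ is $0$-regular by Lemma \ref{lem:claim}(i), we have $H^1(\bX,\F)=0$, so from the exact sequence $0\to\F\otimes\I_{L/\bX}\to\F\to\F_{|L}\to 0$ this surjectivity is equivalent to the vanishing $H^1(\bX,\F\otimes\I_{L/\bX})=0$.

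This last vanishing is a separation statement for an Ulrich bundle on a smooth variety with a very ample polarization, precisely the setting treated in the separation lemmas of \cite[§3]{lopez2023geometrical}; I would apply those to the triple $(\bX,\O_{\bX}(1),\F)$. The key ingredient there is the linear resolution of $\iota_\ast\F$ on $\P^N$ of length $N-n$ furnished by Theorem \ref{thm:def-ulrich}(2), combined with the Koszul resolution of $\O_L$ on $\P^N$ by the $N-1$ linear forms cutting out $L$. The hypercohomology of the resulting tensor product is computed through two spectral sequences whose $E_1$-pages consist of groups $H^q(\bX,\F(-p))^{(\cdots)}$, and the Ulrich vanishings $H^q(\bX,\F(-p))=0$ for $1\le p\le n$ and $q\ge 0$ kill all the potentially obstructing terms. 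The main technical obstacle is that the $N-1$ linear forms do not form a regular sequence on $\bX$ when $n<N$, so the pulled-back Koszul complex is not a resolution and produces higher $\mathrm{Tor}$ contributions; however, the Ulrich vanishings are calibrated exactly to absorb these, and a careful spectral sequence chase delivers the desired $H^1(\bX,\F\otimes\I_{L/\bX})=0$.
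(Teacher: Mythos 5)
Your proposal is correct and follows essentially the same route as the paper: decompose $\b L$ into $B$-lines using that $\phi$ is étale and $\P^1$ is simply connected, push $\E$ forward to the smooth image $\bX$ where it becomes an Ulrich bundle for the very ample $\O_{\bX}(1)$ (Lemma \ref{lem:claim}(vii)), identify the two restriction maps via base change along the Cartesian square, and conclude by the separation lemma of \cite[Lemma 3.3]{lopez2023geometrical}, which the paper cites as a black box exactly where you sketch its Koszul/spectral-sequence proof. The only quibble is that the relevant identification $j^\ast\phi_\ast\E\cong f_\ast(\E_{|\b L})$ is base change for \emph{affine} morphisms (the closed immersion $L\hookrightarrow\bX$ is not flat), not flat base change, but this does not affect the argument.
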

	\begin{proof}
		Since $\b L=L\times_{\bX}X$ \cite[(4.11)]{gortz2010algebraic}, we have the cartesian diagram
		\begin{equation}\label{eq:cartesian}
			\begin{tikzcd}
				\b L\arrow[hook, rr, "\iota"]\arrow[dd, "f"']&&X\arrow[dd, "\phi"]\\
				&&\\
				L\arrow[hook, rr, "j"]&&\bX,
			\end{tikzcd}
		\end{equation} 
		where $\iota$ and $j$ are the inclusions and $f$ is the restriction of $\phi.$ The base change of an étale morphism is étale \cite[\href{https://stacks.math.columbia.edu/tag/02GO}{Tag 02GO}]{stacks-project}, hence $f$ is étale onto $L\simeq\P^1.$ Since $\P^1$ is simply connected \cite[Example IV.2.5.3]{hartshorne2013algebraic}, $\b L$ decomposes as the disjoint union $$\b L=\G_1\bigsqcup\cdots \bigsqcup \G_d,$$ where each $\G_i$ is isomorphic to $\P^1$ under $f_{|\G_i}=\phi_{|\G_i}.$ Denoting by $\iota_i$ the inclusion $\G_i\subset X$ for each $i=1,\dots,d,$ we have $j\circ f_{|\G_i}=\phi\circ\iota_i.$ In particular, we get
		\[
		B_{|\G_i}=\iota_i^\ast B\cong\iota_i^\ast\phi^\ast\O_{\b X}(1)=f_{|\G_i}^\ast j^\ast\O_{\b X}(1)=f_{|\G_i}^\ast\O_L(1).
		\]
		Then the projection formula $B\cdot \G_i=\deg(f_{|\G_i})\cdot\deg(L)=1$ says that $\G_i$ is a $B$-line.
		
		The schematic image $\bX$ is smooth \cite[Corollary 4.3.24]{liu2002algebraic}, hence the sheaf $\b\E=\phi_\ast\E$ is an Ulrich bundle for $(\bX,\O_{\bX}(1))$ by Lemma \ref{lem:claim}(vii).  As (\ref{eq:cartesian}) is cartesian and $\phi$ is affine, \cite[Proposition 12.6]{gortz2010algebraic} yields
		\begin{equation}\label{eq:flat}
			\b\E_{|L}=j^\ast\phi_\ast\E\cong f_\ast \iota^\ast\E= f_\ast(\E_{|\b L}).
		\end{equation}
		Using \cite[Lemma 3.3]{lopez2023geometrical}, we obtain the surjectivity of restriction map $H^0(\bX,\b\E)\to H^0(L,\b\E_{|L}).$  On the other hand, we have $$H^0(X,\E)\cong H^0(\bX,\phi_\ast\E)=H^0(\bX,\b\E)$$ and, thanks to (\ref{eq:flat}),  $$H^0(\b L,\E_{|\b L})\cong H^0(L,f_\ast(\E_{|\b L}))\cong H^0(L,\b\E_{|L}).$$ This proves the first part of the assertion. Moreover the space of global sections of $\E_{|\b L}$ splits as $$H^0(\b L,\E_{|\b L})=H^0(\G_1,\E_{|\G_1})\oplus\cdots\oplus H^0(\G_d,\E_{|\G_d}).$$ From this we deduce that the restriction map $$H^0(\b L,\E_{|\b L})\to H^0(\G_i,\E_{|\G_i})$$ is surjective for all $i.$ Therefore, if $\G\subset X$ is a $B$-line, the schematic image $L'=\phi(\G)\subset \bX$ is a line (Remark \ref{rmk:B-line}) and $\G$ is one of the connected components of $\phi^{-1}(L').$ By the previous part, we get the conclusion.
	\end{proof}
	
	If the $0$-dimensional closed subscheme of length $2$ is not contained in any fibre of $\phi_V,$ we will use the following.
	
	\begin{lemma}\label{lem:3.2.1}
		Let $X$ be a smooth projective variety and let $B$ be a globally generated ample line bundle on $X.$ Suppose there is a subspace $V\subset H^0(X,B)$ such that $\phi_V\colon X\to\phi_V(X)=\bX\subset\P^N$ is étale. Let $\E$ be a $B$-Ulrich bundle on $X.$ Let $Z\subset X$ be a $0$-dimensional closed subscheme of length $2$ which is not contained in any $B$-line. If $Z$ satisfies $Z\not\subset\phi_V^{-1}(\phi_V^{\hspace{0.00000001mm}}(x))$ for every $x\in Z,$ then $$r\colon H^0(X,\E)\to H^0(Z,\E_{|Z})$$ is surjective.
	\end{lemma}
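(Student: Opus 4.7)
The plan is to push the problem down to the schematic image $\bX$ via $\phi:=\phi_V$ and then deduce the required surjectivity from a separation property of the Ulrich bundle $\b\E:=\phi_\ast\E$ on $(\bX,\O_{\bX}(1))$, which is itself Ulrich by Lemma \ref{lem:claim}(vii). Let $\b Z:=\phi(Z)\subset\bX$ be the scheme-theoretic image. The first step is to verify that $\phi|_Z\colon Z\to\b Z$ is an isomorphism of length-$2$ schemes: if $Z$ is non-reduced, supported at a single point $x$, this follows from $\phi$ being étale at $x$, so that $d\phi_x$ is an isomorphism on tangent spaces; if instead $Z=\{x,y\}$ is reduced, the hypothesis $Z\not\subset\phi^{-1}(\phi(x))$ for every $x\in Z$ is exactly the condition $\phi(x)\neq\phi(y)$. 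Flat base change along the cartesian square built from $\b Z\hookrightarrow\bX$ and the étale $\phi$ then identifies $\b\E|_{\b Z}\cong(\phi|_{\phi^{-1}(\b Z)})_\ast(\E|_{\phi^{-1}(\b Z)})$; since $\phi$ is étale, $\phi^{-1}(\b Z)=Z\sqcup Z'$ decomposes as a disjoint union, and $H^0(\b Z,\b\E|_{\b Z})\cong H^0(Z,\E|_Z)\oplus H^0(Z',\E|_{Z'})$ projects naturally onto $H^0(Z,\E|_Z)$. Combined with $H^0(X,\E)=H^0(\bX,\b\E)$, this reduces the claim to the surjectivity of $H^0(\bX,\b\E)\to H^0(\b Z,\b\E|_{\b Z})$.

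Next I would split on whether $\b Z$ lies on a line of $\bX$. If $\b Z$ is not contained in any line $L\subset\bX$, then the appropriate separation lemma for Ulrich bundles from \cite[§3]{lopez2023geometrical}, applied to $\b\E$ on $(\bX,\O_{\bX}(1))$, directly gives the surjectivity and the proof concludes. Otherwise $\b Z\subset L$ for some line $L\subset\bX$, and $Z\subset\phi^{-1}(L)=\bigsqcup_k\G_k$, a disjoint union of $B$-lines by the argument in the proof of Lemma \ref{lem:restriction}. A connected $Z$ (either non-reduced, or reduced with both points in the same component $\G_k$) would then lie in a single $B$-line, contradicting the hypothesis; hence necessarily $Z=\{x,y\}$ is reduced with $x\in\G_i$ and $y\in\G_j$ for distinct $i\neq j$. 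Lemma \ref{lem:restriction} applied to $L$ yields the surjection $H^0(X,\E)\twoheadrightarrow H^0(\b L,\E|_{\b L})=\bigoplus_k H^0(\G_k,\E|_{\G_k})$, and each $\E|_{\G_k}$ is globally generated on $\G_k\cong\P^1$, so the evaluations $H^0(\G_k,\E|_{\G_k})\twoheadrightarrow\E(p)$ at any $p\in\G_k$ are surjective. Composing gives the surjectivity of $r$ in this case as well.

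The main obstacle is the first sub-case, which relies on importing from \cite{lopez2023geometrical} a separation lemma for an Ulrich bundle on a smooth projective variety with respect to its very ample polarization, applied to length-$2$ subschemes off every line; the remaining work is the bookkeeping of the flat base change identifying $\b\E|_{\b Z}$ and the use of the étale decomposition $\phi^{-1}(L)=\bigsqcup\G_k$ to translate the hypotheses on $Z$ into the correct dichotomy on $\b Z$.
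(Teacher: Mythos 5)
Your proof is correct and follows essentially the same route as the paper's: push everything down to $\bX$ via $\phi_V$, use that $(\phi_V)_\ast\E$ is Ulrich on $(\bX,\O_{\bX}(1))$, and split according to whether $\phi_V(Z)$ lies on a line of $\bX$, invoking the separation lemma of \cite[Lemma 3.2]{lopez2023geometrical} in the first case and Lemma \ref{lem:restriction} together with global generation on the components of $\phi_V^{-1}(L)$ in the second. The only (immaterial) difference is that you realize $Z$ as a direct summand of the scheme-theoretic preimage $\phi_V^{-1}(\phi_V(Z))$ using étaleness, whereas the paper simply observes that restriction from the $0$-dimensional scheme $\phi_V^{-1}(\phi_V(Z))$ to its closed subscheme $Z$ is surjective.
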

	\begin{proof}
		Set $\phi=\phi_V$ and $\b Z=\phi(Z)\subset\bX.$ Observe that $\bX$ is smooth \cite[Corollary 4.3.24]{liu2002algebraic} and that the sheaf $\b\E=\phi_\ast\E$ is an Ulrich bundle for $(\bX,\O_{\bX}(1))$ by Lemma \ref{lem:claim}(vii) (see also the proof of Lemma \ref{lem:restriction}).
		Our assumption on $Z$ implies that $\b Z$ is $0$-dimensional, closed and of length $2.$ There are two cases: there is no line in $\bX$ containing $\b Z$  or there exists a line $L\subset\bX$ containing $\b Z.$
		
		Consider the first case. Then the restriction map $$H^0(\bX,\b\E)\to H^0(\b Z,\b\E_{|\b Z})$$ is surjective by \cite[Lemma 3.2]{lopez2023geometrical}. Now, let $Z'=\b Z\times_{\bX}X\subset X$ be the scheme-theoretic inverse image of $\b Z$ \cite[(4.11)]{gortz2010algebraic} and consider the cartesian diagram 
		\begin{equation}\label{eq:diagram2}
			\begin{tikzcd}
				Z'\arrow[hook, rr, "i"]\arrow[dd, "g"']&&X\arrow[dd, "\phi"]\\
				&&\\
				\b Z\arrow[hook, rr, "h"]&&\bX,
			\end{tikzcd}
		\end{equation} where $i$ and $h$ are the inclusions and $g$ is the restriction of $\phi$ to $Z'.$ Since $\phi$ is affine and (\ref{eq:diagram2}) is cartesian, we can apply \cite[Proposition 12.6]{gortz2010algebraic} to get
		\[
		\b\E_{|\b Z}=h^\ast\phi_\ast\E\cong g_\ast i^\ast\E=g_\ast(\E_{|Z'}).
		\] Since $H^0(X,\E)\cong H^0(\b X,\b\E)$ and $H^0(Z',\E_{|Z'})\cong H^0(\b Z,g_\ast(\E_{|Z'})),$ we deduce that $$H^0(X,\E)\to H^0(Z',\E_{|Z'})$$ is surjective. As $Z\subset Z'$ and $\dim Z'=0,$ also $$H^0(Z',\E_{|Z'})\to H^0(Z,\E_{|Z})$$ is surjective. Therefore $r$ is onto as required.
		
		Finally, assume that there exists a line $L\subset \bX$ passing through $\b Z.$ Let $\b L$ be the scheme-theoretic inverse image of $L.$ We know that $$\b L=\G_1\bigsqcup\cdots\bigsqcup \G_d$$ with $\G_i$ being a $B$-line (see the proof of Lemma \ref{lem:restriction}). Since $Z$ is not contained in any $B$-line, there must exist $1\leq a\neq b\leq d$ such that $x\in \G_a$ and $x'\in \G_b,$ where $Z=\{x,x'\}.$ Observe that $x$ and $x'$ cannot be infinitely near for otherwise $\supp(Z)=\{x\}$ and $\G_a\cap \G_b\neq\emptyset.$ The vector bundles $\E_{|\G_a}$ and $\E_{|\G_b}$ are globally generated, therefore $$H^0(\G_a,\E_{|\G_a})\to\E(x),\ H^0(\G_b,\E_{|\G_b})\to\E(x')$$ are surjective. It follows that the restriction map $$H^0(\b L,\E_{|\b L})\to\E(x)\oplus\E(x')=H^0(Z,\E_{|Z})$$ is surjective. The conclusion follows from the surjectivity of $H^0(X,\E)\to H^0(\b L,\E_{|\b L})$ (Lemma \ref{lem:restriction}).
	\end{proof}
	
	The next simple result of general nature will be helpful in case the $0$-dimensional closed subscheme of length $2$ is contained in a fibre of $\phi_V.$
	
	\begin{rmk}\label{rmk:restriction-fibre}
		Let $f\colon X\to Y$ be an affine morphism of schemes and let $\F$ be a coherent sheaf on $X$ such that $f_\ast\F$ is generated by global sections. Suppose the schematic fibre $X_y=f^{-1}(y)$ over $y\in Y$ is non-empty. Then the restriction map $$H^0(X,\F)\to H^0(X_y,\F_{|X_y})$$ is surjective.
		
		Indeed, consider the cartesian diagram 
		\[
		\begin{tikzcd}
			X_y=X\times_Y\Spec\left(\C(y)\right)\arrow[hook, rr, "i"]\arrow[dd, "g"']&&X\arrow[dd, "f"]\\
			&&\\
			\{y\}=\Spec\left(\C(y)\right)\arrow[hook, rr, "j"]&&Y
		\end{tikzcd}
		\] with $i$ and $j$ being the inclusions and $g$ the base change of $f.$ Since $f_\ast \F$ is globally generated on $Y,$ the map $$H^0(Y,f_\ast\F)\to (f_\ast\F)(y)=H^0(\{y\},(f_\ast\F)_{|\{y\}})$$  is surjective. On the other hand, \cite[Proposition 12.6]{gortz2010algebraic} gives the isomorphism
		\[
		(f_\ast\F)_{|\{y\}}=j^\ast f_\ast\F\cong g_\ast i^\ast\F=g_\ast(\F_{|X_y}).
		\] As $H^0(X,\F)\cong H^0(Y,f_\ast\F)$ and $H^0(X_y,\F_{|X_y})\cong H^0(\{y\},g_\ast(\F_{|X_y})),$ the assertion follows. 
	\end{rmk}
	
	\begin{lemma}\label{lem:3.2.2}
		Let $X$ be a smooth projective variety and let $B$ be a globally generated ample line bundle on $X.$ Suppose there is a subspace $V\subset H^0(X,B)$ such that $\phi_V\colon X\to\phi_V(X)=\bX\subset\P^N$ is étale. Let $\E$ be a $B$-Ulrich bundle	on $X.$ Let $Z\subset X$ be a $0$-dimensional closed subscheme of length $2$ which is not contained in any $B$-line. Then $$r\colon H^0(X,\E)\to H^0(Z,\E_{|Z})$$ is surjective.
	\end{lemma}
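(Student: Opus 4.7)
The plan is to handle the case left open by Lemma \ref{lem:3.2.1}. Either for every $x\in Z$ one has $Z\not\subset\phi_V^{-1}(\phi_V^{\hspace{0.000001mm}}(x))$, in which case the surjectivity of $r$ is exactly the content of Lemma \ref{lem:3.2.1}; or there exists $x\in Z$ with $Z\subset\phi_V^{-1}(\phi_V^{\hspace{0.000001mm}}(x))$, and the strategy is then to factor $r$ through the restriction to this whole fibre.

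In this second case, set $y=\phi_V(x)$ and $X_y=\phi_V^{-1}(y)$, so that $Z\subset X_y$. I would consider the factorization
\[
H^0(X,\E)\longrightarrow H^0(X_y,\E_{|X_y})\longrightarrow H^0(Z,\E_{|Z})
\]
and show both arrows are surjective. For the first, I would reuse the observation already exploited in the proof of Lemma \ref{lem:restriction}: the schematic image $\b X$ is smooth, and $\b\E=(\phi_V)_\ast\E$ is an Ulrich bundle on $(\b X,\O_{\b X}(1))$ by Lemma \ref{lem:claim}(vii), hence globally generated by Lemma \ref{lem:claim}(i). Since $\phi_V$ is finite and so affine, Remark \ref{rmk:restriction-fibre} applied to $\phi_V$ and $\E$ then yields the desired surjection onto $H^0(X_y,\E_{|X_y})$. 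For the second arrow, $X_y$ is $0$-dimensional, so the higher cohomology of every coherent sheaf on $X_y$ vanishes, and the long exact sequence associated to $0\to \I_{Z/X_y}\o\E_{|X_y}\to\E_{|X_y}\to\E_{|Z}\to 0$ gives the required surjectivity.

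I do not expect any substantial obstacle; the main point that merits attention is the consistency of the fibre case with the standing assumption that $Z$ is not contained in any $B$-line. This is automatic: by Remark \ref{rmk:B-line} any $B$-line $\G\subset X$ maps isomorphically onto a line in $\b X$ via $\phi_V$, hence meets each fibre of $\phi_V$ in at most one point, so two distinct points of $X_y$ can never lie on a common $B$-line. Moreover, since $\phi_V$ is étale, $X_y$ is reduced $0$-dimensional, which forces the length-$2$ subscheme $Z\subset X_y$ to be reduced and to consist of two distinct points; combined with the previous observation, this shows that the hypothesis on $Z$ is automatically fulfilled in the fibre case.
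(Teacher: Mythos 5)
Your proof is correct and takes essentially the same route as the paper's: split according to whether $Z$ is contained in a schematic fibre of $\phi_V$, invoke Lemma \ref{lem:3.2.1} when it is not, and otherwise factor $r$ through $H^0$ of the whole fibre, whose surjectivity onto $H^0(X,\E)$'s image follows from Remark \ref{rmk:restriction-fibre} together with the global generation of $(\phi_V)_\ast\E$. The only cosmetic difference is in the last step: the paper notes that the fibre and $Z$ consist of distinct reduced points since $\phi_V$ is unramified, while you deduce the surjectivity of $H^0(F_x,\E_{|F_x})\to H^0(Z,\E_{|Z})$ from the vanishing of $H^1$ on a $0$-dimensional scheme; both justifications are valid.
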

	\begin{proof}
		Write $\phi=\phi_V$ and denote by $F_y$ the schematic fibre $\phi^{-1}(\phi(y))$ for every $y\in X.$ If $Z\not\subset F_x$ for every $x\in Z,$ the claim follows by Lemma \ref{lem:3.2.1}. On the other hand, since $\phi_\ast\E$ is Ulrich on $\bX$ (see the proof of Lemma \ref{lem:restriction}), hence globally generated, we know from Remark \ref{rmk:restriction-fibre} that  $$r_{y}\colon H^0(X,\E)\to H^0(F_y,\E_{|F_y})$$ is surjective for every $y\in X.$  In case $Z\subset F_x$ for $x\in Z,$ the schemes $F_x$ and $Z$ consist of distinct points since $\phi$ is unramified. Therefore $$r'\colon H^0(F_x,\E_{|F_x})\to H^0(Z,\E_{|Z})$$ is surjective. Since $r=r'\circ r_x,$ the assertion follows.
	\end{proof}
	
	\begin{lemma}\label{lem:3.2}
		Let $X$ be a smooth projective variety and let $B$ be a globally generated ample line bundle on $X.$ Suppose there is a subspace $V\subset H^0(X,B)$ such that $\phi_V\colon X\to\phi_V(X)=\bX\subset\P^N$ is étale. Let $\E$ be a $B$-Ulrich bundle on $X.$ Let $Z\subset X$ be a $0$-dimensional closed subscheme of length $2$ and let $r\colon H^0(X,\E)\to H^0(Z,\E_{|Z})$ be the restriction map. Then $r$ is surjective if one of the following holds:
		\begin{itemize}
			\item[(i)] There is no $B$-line containing $Z.$
			\item[(ii)] There is a $B$-line $\G\subset X$ containing $Z$ and $\E_{|\G}$ is ample on $\G.$
		\end{itemize} 
	\end{lemma}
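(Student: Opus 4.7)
The plan is to split into the two cases and reduce each to something already established. Case (i) is precisely the content of Lemma \ref{lem:3.2.2}, so there is nothing to do there.

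For case (ii), I would factor the restriction map through $\G$:
\[
r\colon H^0(X,\E)\xrightarrow{\alpha} H^0(\G,\E_{|\G})\xrightarrow{\beta} H^0(Z,\E_{|Z}).
\]
The surjectivity of $\alpha$ is exactly Lemma \ref{lem:restriction}. So everything reduces to showing that $\beta$ is surjective, i.e.\ that $\E_{|\G}$ is $1$-very ample on $\G$.

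For this I would use that $\G$ is a $B$-line, so by Remark \ref{rmk:B-line} (or Lemma \ref{lem:degree-1}) it is isomorphic to $\P^1$. An ample vector bundle on $\P^1$ splits as
\[
\E_{|\G}\cong\O_{\P^1}(a_1)\oplus\cdots\oplus\O_{\P^1}(a_r)
\]
with every $a_i\geq 1$ (see \cite[Example 6.1.3]{lazarsfeld2017positivity2} or \cite[Theorem 3.2.2]{lazarsfeld2017positivity2}). Each summand $\O_{\P^1}(a_i)$ has degree $\geq 1$, hence is itself $1$-very ample on $\P^1$, and surjectivity of restriction to any length-$2$ subscheme follows from the standard cohomology vanishing on $\P^1$ (twist the ideal sequence of $Z$ in $\G$ by $\O_{\P^1}(a_i)$ and use $H^1(\P^1,\O_{\P^1}(a_i-2))=0$). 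Taking the direct sum, $\E_{|\G}$ is $1$-very ample, which gives the surjectivity of $\beta$. Composing with $\alpha$ yields the surjectivity of $r$ and completes the proof.

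There is no real obstacle here: the lemma is essentially a packaging of Lemma \ref{lem:restriction} and Lemma \ref{lem:3.2.2} together with the elementary fact that ample bundles on $\P^1$ are $1$-very ample. The only small point to be careful about is that $Z$ need not be reduced, but the splitting of $\E_{|\G}$ on $\P^1$ reduces the question to $1$-very ampleness of line bundles $\O_{\P^1}(a_i)$ with $a_i\geq 1$, which separates arbitrary length-$2$ zero-dimensional subschemes.
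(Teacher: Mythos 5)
Your proposal is correct and follows essentially the same route as the paper: case (i) is delegated to Lemma \ref{lem:3.2.2}, and in case (ii) the restriction map is factored as $r=r_Z\circ r_\G$ with $r_\G$ surjective by Lemma \ref{lem:restriction} and $r_Z$ surjective because $\E_{|\G}$ is ample, hence $1$-very ample, on $\G\simeq\P^1$. The paper states the last step more tersely (invoking the very ampleness of $\E_{|\G}$ on $\P^1$ directly), while you spell out the splitting into line bundles of positive degree and the vanishing $H^1(\P^1,\O_{\P^1}(a_i-2))=0$; this is just a more explicit version of the same argument.
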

	\begin{proof}
		In case (i), the claim follows by Lemma \ref{lem:3.2.2}. Then suppose that (ii) holds.	 Lemma \ref{lem:restriction} says that $r_\G\colon H^0(X,\E)\to H^0(\G,\E_{|\G})$ is surjective, and the (very) ampleness of $\E_{|\G}$ on $\G\simeq\P^1$ (Remark \ref{rmk:B-line}) tells that $r_Z\colon H^0(\G,\E_{|\G})\to H^0(Z,\E_{|Z})$ is surjective. The map $r$ factors as the composition $r=r_Z\circ r_\G,$ hence the assertion follows.
	\end{proof}
	
	Now we relate the separation properties of the $B$-Ulrich bundle $\E$ with the Seshadri constant of the tautological line bundle $\O_{\P(\E)}(1).$
	
	\begin{lemma}\label{lem:step3-4'}
		Let $X$ be a smooth projective variety and let $B$ be a globally generated ample line bundle on $X.$ Suppose there is a subspace $V\subset H^0(X,B)$ such that $\phi=\phi_V\colon X\to\phi_V(X)=\bX\subset\P^N$ is étale. Let $\E$ be a $B$-Ulrich bundle on $X.$ Let $x\in X$ be a point and suppose one of the following conditions holds:
		\begin{itemize}
			\item[(i)] There are no $B$-lines passing through $x.$
			\item[(ii)] $\E_{|\G}$ is ample on every $B$-line $\G\subset X$ passing through $x.$
		\end{itemize}
		Then $\eps(\O_{\P(\E)}(1);y)\geq1$ for all $y\in\P(\E(x)).$ 
	\end{lemma}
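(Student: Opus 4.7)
The plan is to show directly that $\xi := \O_{\P(\E)}(1)$ separates tangent vectors at $y$; by Remark \ref{rmk:seshadri-jet} this will yield $\eps(\xi;y) \geq 1$ (note $\xi$ is nef, since $\E$ is globally generated by Lemma \ref{lem:claim}(i)). Equivalently, for every length-$2$ closed subscheme $Z' \subset \P(\E)$ with $\supp(Z') = \{y\}$, I must prove that the restriction
\[
H^0(\P(\E),\xi) \to H^0(Z', \xi_{|Z'})
\]
is surjective. Using the tangent sequence associated with $\pi \colon \P(\E) \to X$, I split into two cases according to whether the tangent direction of $Z'$ at $y$ lies in $T_y\P(\E(x))$ (the \emph{vertical case}, where $Z' \subset \P(\E(x))$) or projects non-trivially to $T_x X$ (the \emph{horizontal case}).

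In the vertical case the restriction factors as
\[
H^0(\P(\E), \xi) = H^0(X, \E) \twoheadrightarrow \E(x) = H^0(\P(\E(x)), \O_{\P^{r-1}}(1)) \twoheadrightarrow H^0(Z', \xi_{|Z'}),
\]
where the first surjection is the global generation of $\E$ at $x$ (Lemma \ref{lem:claim}(i)) and the second is the very ampleness of $\O_{\P^{r-1}}(1)$ on the fibre $\P(\E(x)) \simeq \P^{r-1}$.

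In the horizontal case $\pi$ restricts to an isomorphism of $Z'$ onto a length-$2$ closed subscheme $Z \subset X$ supported at $\{x\}$. Restricting the tautological surjection $\pi^*\E \twoheadrightarrow \xi$ to $Z'$ gives a surjection of sheaves $\E_{|Z} \twoheadrightarrow \xi_{|Z'}$ on the affine scheme $Z' \simeq Z$, and hence a surjection on global sections. It therefore suffices to prove that $H^0(X, \E) \to H^0(Z, \E_{|Z})$ is surjective. Under hypothesis (i), no $B$-line passes through $x$, so no $B$-line contains $Z$, and Lemma \ref{lem:3.2}(i) applies. Under hypothesis (ii), either no $B$-line contains $Z$ (Lemma \ref{lem:3.2}(i) again) or $Z$ is contained in a $B$-line $\G$ passing through $x$, in which case $\E_{|\G}$ is ample by assumption and Lemma \ref{lem:3.2}(ii) applies.

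The main technical point I anticipate is the horizontal case: one must identify $Z' \simeq Z$ via $\pi$ and verify that the direct restriction $H^0(\P(\E),\xi) \to H^0(Z',\xi_{|Z'})$ really coincides with the composition $H^0(X,\E) \to H^0(Z,\E_{|Z}) \to H^0(Z',\xi_{|Z'})$. This is a formal consequence of the isomorphism $\pi_*\xi \cong \E$ together with the functoriality of pullback applied to the tautological surjection, but it is the only place where the interplay between the geometry on $X$ and on $\P(\E)$ has to be handled carefully.
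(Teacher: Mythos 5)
Your proposal is correct and follows essentially the same route as the paper: reduce via Remark \ref{rmk:seshadri-jet} to separation of length-$2$ subschemes supported at $y$, split according to whether the subscheme lies in the fibre $\P(\E(x))$, handle the fibre case via surjectivity onto $H^0(\P(\E(x)),\xi_x)$ plus very ampleness of $\O_{\P^{r-1}}(1)$, and handle the remaining case by pushing down to $\pi(Z)$ and invoking Lemma \ref{lem:3.2} under each hypothesis. The only cosmetic differences are that you justify the surjection onto the fibre sections by global generation of $\E$ where the paper cites \cite[Remark 2.3]{lopez2023geometrical}, and that you spell out the compatibility $H^0(X,\E)\to H^0(Z,\E_{|Z})\to H^0(Z',\xi_{|Z'})$ which the paper leaves implicit.
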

	\begin{proof}
		Let $\pi\colon\P(\E)\to X$ be the natural projection, and write $\xi=\O_{\P(\E)}(1)$ and $\xi_x:=\xi_{|\P(\E(x))}\simeq\O_{\P^{\rk(\E)-1}}(1).$ Fix a point $y\in\P(\E(x)).$ According to Remark  \ref{rmk:seshadri-jet}, it suffices to show that  the restriction map $$r_Z\colon H^0(\P(\E),\xi)\to H^0(Z,\xi_{|Z})$$ to every $0$-dimensional closed subscheme $Z\subset \P(\E)$ of length $2$ with $\supp(Z)=\left\{y\right\}$ is surjective. If $Z\subset\P(\E(x)),$ since $\xi_{|Z}=(\xi_x)_{|Z},$ one has the commutative diagram
		\begin{equation}\label{eq:diagram-projective}
			\begin{tikzcd}
				H^0(\P(\E),\xi)\arrow[rr, "r_Z"]\arrow[dd, "\rho"] &&  H^0(Z,\xi_{|Z})\arrow[dd, "\cong"]\\ 
				&&\\
				H^0(\P(\E(x)),\xi_x)\arrow[rr, "r_{Z,x}"]&& H^0(Z,(\xi_x)_{|Z}).
			\end{tikzcd}
		\end{equation} The map $\rho$ is surjective \cite[Remark 2.3]{lopez2023geometrical}, as well as $r_{Z,x}$ since $\xi_x$ is very ample on $\P(\E(x)).$ Therefore $r_Z$ must be surjective by the commutativity of (\ref{eq:diagram-projective}). Suppose $Z \not\subset\P(\E(x)),$ so that $\pi(Z)\subset X$ is a $0$-dimensional closed subscheme of length $2.$ Thanks to our assumptions, the restriction map $H^0(X,\E)\to H^0(\pi(Z),\E_{|\pi(Z)})$ is surjective by Lemma \ref{lem:3.2}: if there is no $B$-line containing $\pi(Z),$  we are in case (i); if $\pi(Z)\subset\G$ for a $B$-line $\G\subset X,$ then we are in the case (ii). It follows that $r_Z$ is surjective as claimed.
	\end{proof}
	
	Now we are ready to prove Theorem \ref{thm:ulrich-augmented} and its corollary.
	
	\begin{proof}[Proof of Theorem \ref{thm:ulrich-augmented}] The last part of the assertion is a consequence of the characterization of the augmented base locus in Remark \ref{rmk:augmented-vector}. Hence we only need to show the first one.
		The “if part” follows by Lemma \ref{lem:step1} and Remark \ref{rmk:B-line}. Conversely, suppose that $\eps(\E;x)=0.$ By Remark \ref{rmk:augmented} we know that there is a point $z\in \P(\E(x))$ such that $z\in\B_+(\O_{\P(\E)}(1)),$ hence such that $\eps(\O_{\P(\E)}(1);z)=0$ (Remark \ref{rmk:augmented-seshadri}). Let $\mc{B}_x$ be the set of all $B$-lines passing through $x.$ Then $\mc{B}_x\neq\emptyset,$ for otherwise Lemma \ref{lem:step3-4'}(i) would imply $\eps(\O_{\P(\E)}(1);z)\geq1.$ Moreover,  if $\E_{|\G_x}$ is ample on every $\G_x\in\mc{B}_x,$ then $\eps(\O_{\P(\E)}(1);z)\geq1$ by Lemma \ref{lem:step3-4'}(ii), giving a contradiction. Therefore we conclude that there must exist a $B$-line $\G\subset X$ such that $\E_{|\G}$ is not ample.
	\end{proof}

	\begin{proof}[Proof of Corollary \ref{cor:ulrich-augmented}]
		This is a simple consequence of Theorems \ref{thm:V-big} - \ref{thm:ulrich-augmented} and of Remark \ref{rmk:augmented-vector}.
	\end{proof}
	
	Let's see an easy example where the augmented base locus can be computed explicitly.
	
	\begin{ex}\label{ex:DP-surface-augmented}
		Consider a Del Pezzo surface $(S,-K_S)$ of degree $3\le d\le 7.$ Then $\phi_{-K_S}\colon S\hookrightarrow\P^{9-d}$ is an embedding and, as is well known, there are finitely many lines lying on $S.$ In our situation at least two of them are disjoint, say $\l_1$ and $\l_2.$ Then $\E=(\l_1-\l_2)(-K_S)$ is a $(-K_S)$-Ulrich line bundle on $S$ \cite[Proposition 4.1(i)]{beauville2018introduction}. Since $\E^2=d-2>0,$ this is (nef and) big. Hence $\B_+(\E)\neq S$ (Remark \ref{rmk:V-big}).  We claim that $$\B_+(\E)=\ \l_1\ \bigsqcup\ \left(\ \bigcup_{\substack{\l\subset S\ \mathrm{line}\colon\\
				\l\cap\l_1=\emptyset,\ \l\neq\l_2,\ \l\cap\l_2\neq\emptyset}}\l\ \right).$$ 
		It particular, all these $\E$'s are big Ulrich bundles which are not ample (Corollary \ref{cor:ulrich-augmented}).
		
		To see this, by Theorem \ref{thm:ulrich-augmented} we only need to find the lines $\l\subset S$ such that $\E\cdot\l=0.$ As all lines in $S$ are $(-1)$-curves and	$\E\cdot\l=\l_1\cdot\l-\l_2\cdot\l+1,$ the claim immediately follows.
	\end{ex}
	
	Unlike what happens for the ampleness and ($1$-)very ampleness (see Theorem \ref{thm:ulrich-ampleness}), bigness and V-bigness for a $B$-Ulrich bundle, with $B$ as above, are not equivalent.
	
	\begin{rmk}
		Let $Q_n\subset\P^{n+1}$ be a smooth quadric of dimension $n\ge 7, n\neq 10.$ Then the spinor bundles $\mc{S},\mc{S'},$ if $n$ is even, and $\mc{S''},$ when $n$ is odd, are the only indecomposable Ulrich bundles on $Q_n$ \cite[Proposition 2.5]{beauville2018introduction} and are big in this situation \cite[Theorem 1]{lopez2023non}. However, their restriction to every line in $Q_n$ is not ample \cite[Corollary 1.6]{ottaviani1988spinor}. Since $Q_n$ is covered by lines, by Corollary \ref{cor:ulrich-augmented}(a) we conclude that $\mc{S},\mc{S'}$ and $\mc{S''}$ are not V-big.
	\end{rmk}
	
	Thanks to Corollary \ref{cor:ulrich-augmented} and to Theorem \ref{thm:ulrich-ampleness} we can find new examples of (V-)big Ulrich bundles which are not ample (and whose $c_1$ is not very ample). See also \cite[Remark 4.3]{lopez2023geometrical}.
	
	\begin{ex}
		Let $S\subset\P^3$ be the cubic surface (with hyperplane section $\O_S(1)=-K_S$). Since $S$ is not covered by lines, all Ulrich bundles on $S$ are trivially V-big by Corollary \ref{cor:ulrich-augmented}. Now, looking at $S$ as the blow-up of the plane at $6$ points in general position, denote by $\w H$ the pullback of a line and by $E_i$ the exceptional divisors, for $i=1,\dots,6.$ By \cite[Examples 3.6 - 4.7]{casanellas2012stable} there exist three Ulrich bundles $\E_a,\E_b,\E_d$ of rank $2$ and an Ulrich bundle $\E_h$ of rank $3$ on $S$ such that 
		\[
		c_1(\E_a)=2\w H,\ \ \ c_1(\E_b)=3\w H-E_1-E_2-E_3,\ \ \ c_1(\E_d)=4\w H-2E_1-\sum_{i=2}^5E_i, \ \ \ c_1(\E_h)=6\w H-2\sum_{i=1}^4 E_i-E_5.
		\] Each of the above $c_1$ is clearly not ample, for instance $c_1\cdot E_6=0$ for all of them. Then $\E_a,\E_b,\E_d,\E_h$ cannot be ample, for otherwise they would be $1$-very ample (Theorem \ref{thm:ulrich-ampleness}) and their first Chern class would be so as well (Remark \ref{rmk:k-very}).
	\end{ex}
	
	We can find big non-ample Ulrich bundles even if its Chern class is very ample and the variety is covered by lines.
	
	\begin{ex}\label{ex:DP-threefold-V-big}
		Let $(X,B)$ be a Del Pezzo threefold of degree $3\leq d\leq 5.$ Then $X$ is covered by lines \cite[Proposition III.1.4(ii)]{iskovskikh1980anticanonical}, and in fact the Fano scheme of lines $F(X)$ is a smooth irreducible surface \cite[Propositions III.1.3(iii) - III.1.6(i) \& Remark III.1.5]{iskovskikh1980anticanonical}. Let $\E$ be any stable special Ulrich bundle of rank $2$ on $X$ \cite[Theorem 1.1]{ciliberto2023ulrich}. In particular $c_1(\E)=2B$ is very ample. As $\E(-1)$ is still stable and $c_1(\E(-1))=0,H^1(X,\E(-2))=0,$ 	we see that $\E(-1)$ is an instanton bundle in the sense of \cite[Definition 1.1]{kuznetsov2012instanton}. Since $\E_{|L}\cong\O_L(1)^{\oplus2}$ for a general line $[L]\in F(X)$ \cite[(Proof of) Lemma 4.8(i)]{ciliberto2023ulrich}, by \cite[Theorem 3.17]{kuznetsov2012instanton} there exists a non-zero divisor $D_\E\subset F(X)$ which parameterizes the lines $L\subset X$ such that $\E(-1)_{|L}\cong \O_L(-1)\oplus\O_L(1).$ Then Corollary \ref{cor:ulrich-augmented} and (\ref{eq:ulrich-augmented}) tell that $\E$ is not ample with non-empty augmented base locus  $$\B_{+}(\E)=\bigcup_{[L]\in D_\E}L.$$ Consider the incidence correspondence 
		\[
		\begin{tikzcd}
			&\mc{S}=\left\{(x,[\G])\in X\times F(X)\ |\ x\in\G \right\}\subset X\times F(X)\arrow[from=1-2, to=2-1, "\pi_1"']\arrow[from=1-2, to=2-3, "\pi_2"]\\
			X && F(X)
		\end{tikzcd}
		\]
		with projections $\pi_1,\pi_2.$ As every fibre $\pi_2^{-1}([L])\cong L$ is smooth irreducible of dimension $1,$ a dimension count shows that $\pi_1(\pi_2^{-1}(D_\E))\subsetneq X$ is a proper subset. As $\B_+(\E)=\pi_1(\pi_2^{-1}(D_\E)),$ we conclude that $\E$ is V-big.
	\end{ex}
	
	We observe that Theorem \ref{thm:ulrich-augmented} and Corollary \ref{cor:ulrich-augmented} no longer hold if we do not assume $\phi_V$ to be étale, in particular if we allow the ramification locus to be non-empty.
	
	\begin{rmk}\label{rmk:DP-surface}
		The Del Pezzo surface $(S,-K_S)$ of degree $d=2,$ which is a finite flat double cover $\phi_{-K_S}\colon S\to\P^2$ branched over a smooth plane quartic curve, supports a non-big $(-K_S)$-Ulrich line bundle $\E$ (see Proposition \ref{prop:ulrich-DP-surface} and Example \ref{ex:DP-surface}). This means that $\B_+(\E)=S.$ 	On the other hand, it is well known that $S$ is not covered by $(-K_S)$-lines (see also Example \ref{ex:DP-surface-augmented}). Therefore the equality in (\ref{eq:ulrich-augmented}) cannot hold for $\E.$
	\end{rmk}
	
	The previous example was easy because the variety contains a finite number of $B$-lines. However the same problem may arise also when the variety is covered by $B$-lines.
	
	\begin{rmk}\label{rmk:DP-threefold}
		Let $(X,B)$ be a Del Pezzo $3$-fold of degree $d=2$ and let $\E$ be the stable special $B$-Ulrich bundle of rank $2$ provided by Proposition \ref{prop:ulrich-DP-threefold}. As above, we are not in the situation of Theorem \ref{thm:ulrich-augmented} since the morphism $\phi_B\colon X\to\P^3$ is a finite flat double cover branched over a smooth quartic surface. But, on the contrary of the previous remark, $X$ is covered by $B$-lines (see Remark \ref{rmk:seshadri-B-line}). More precisely the Fano scheme of ($B$-)lines $F(X)$ is an irreducible surface \cite[Remark III.1.7]{iskovskikh1980anticanonical}. In Example \ref{ex:DP-threefold} we showed that $\E$ is non-big. Therefore $\B_+(\E)=X.$ However, in \cite[Proof of Theorem D, Step 1]{faenzi2014even} it is proved that $\E_{|\G}\cong \O_{\G}(1)^{\oplus2}$ for a general $B$-line $[\G]\in F(X).$ Exactly as in Example \ref{ex:DP-threefold-V-big}, $\E(-B)$ is an instanton bundle on $X$ \cite[Definition 1.1]{kuznetsov2012instanton}. Therefore the $B$-lines $\G\subset X$ such that $\E_{|\G}\cong\O_\G^{\oplus2}$ form a non-zero divisor $D_\E\subset F(X)$ by \cite[Theorem 3.17]{kuznetsov2012instanton}. Consider the incidence correspondence
		\[
		\begin{tikzcd}
			&\mc{S}=\left\{(x,[\G])\in X\times F(X)\ |\ x\in\G \right\}\subset X\times F(X)\arrow[from=1-2, to=2-1, "\pi_1"']\arrow[from=1-2, to=2-3, "\pi_2"]\\
			X && F(X),
		\end{tikzcd}
		\] with projections $\pi_1,\pi_2.$ Every fibre $\pi_2^{-1}([\G])\cong \G$ is smooth irreducible of dimension $1.$ For dimensional reasons, $\pi_1(\pi_2^{-1}(D_\E))\subsetneq X$ must be a proper subset. By construction, every point $x\in X\backslash \pi_1(\pi_2^{-1}(D_\E))$ is crossed only by $B$-lines $\G\subset X$ such that $\E_{|\G}\cong \O_\G(1)^{\oplus2}.$ We conclude that the union over all $B$-lines $\G\subset X$ such that $\E_{|\G}$ is not ample cannot coincide with $X.$ This says that Theorem \ref{thm:ulrich-augmented} and Corollary \ref{cor:ulrich-augmented} do not apply even in this situation.
	\end{rmk}
	
	We point out that the assumptions on $B$ considered so far are not necessary: there are examples in which $\phi_B$ is not étale but the characterization of Theorem \ref{thm:ulrich-augmented} holds.
	
	\begin{rmk}\label{rmk:ulrich-curve}
		First we observe that given any smooth projective curve $C$ of genus $g\geq0$ with a globally generated ample line bundle $B,$ one can easily see that $\E_{L}=L(B)$ is $B$-Ulrich for every line bundle $L\in \Pic^{g-1}(C)\backslash\Theta,$ with $\Theta$ being the divisor which contains all effective line bundles on $C$ of degree $g-1.$ Note that $\deg(\E_L)=\deg B+g-1.$
		
		Any pair $(C,B)\neq(\P^1,\O_{\P^1}(1))$ is not a $B$-line and, in this case, all $\E_L$'s are ample since $\deg(\E_L)>0.$ Therefore $\B_+(\E_L)=\emptyset$ accordingly with the fact that $C$ contains no $B$-lines.
		
		However, taking for instance $(C,B)$ with $g=1$ and $\deg B=2,$ we see that $\E_L$ is ample but not ($1$-)very ample. This shows that also Theorem \ref{thm:ulrich-ampleness} does not hold when $\phi_B$ is ramified: in fact $\phi_B\colon C\to\P^1$ is a finite flat double cover with $2$ branch points. Anyway, if $\deg B\geq g+2,$ the $B$-Ulrich line bundle $\E_L$ is ($1$-)very ample even if $\phi_V$ is not étale for all $|V|\subset|B|.$ 
	\end{rmk}
	
	Finally we prove Theorem \ref{thm:ulrich-ampleness}, which is nothing but the analogue of \cite[Theorem 1]{lopez2023geometrical} in this setting.
	
	\begin{proof}[Proof of Theorem \ref{thm:ulrich-ampleness}]
		It's clear that (2) implies (3), while the fact that (1) implies (3) is given by 	Remark \ref{rmk:k-very}. Corollary \ref{cor:ulrich-augmented}(b) gives the equivalence of (3) and (4). Let $\pi\colon\P(\E)\to X$ be the natural projection and let $\xi=\O_{\P(\E)}(1)$ be the tautological line bundle on $\P(\E).$ Now assume (4) and let $Z\subset \P(\E)$ (resp. $Z'\subset X$) be a $0$-dimensional closed subscheme of length $2.$ To prove (2) (resp. (1)), we show that 
		\[
		r_Z\colon H^0(\P(\E),\xi)\to H^0(Z,\xi_{|Z}) \hspace{1cm} (\mathrm{resp.}\ r'\colon H^0(X,\E)\to H^0(Z',\E_{|Z'}))
		\] 
		is surjective. If $Z\subset\P(\E(x))$ for some $x\in X,$ we obtain the same commutative diagram of (\ref{eq:diagram-projective}), which then forces $r_Z$ to be surjective. Suppose $Z\not\subset\P(\E(x))$ for every $x\in X.$ Then $\pi(Z)\subset X$ is a $0$-dimensional closed subscheme of length $2.$ Let $r\colon H^0(X,\E)\to H^0(\pi(Z),\E_{|\pi(Z)})$ be the restriction map. If there is no $B$-line passing through $\pi(Z)$ (resp. $Z'$), which happens in particular when $X$ contains no $B$-lines, then $r$ (resp. $r'$) is surjective by Lemma \ref{lem:3.2}(i), hence so is $r_Z.$ Analogously, if $\G\subset X$ is a $B$-line containing $\pi(Z)$ (resp. $Z'$), then we know that $r$ (resp. $r'$) is surjective thanks to Lemma \ref{lem:3.2}(ii). Therefore $r_Z$ is surjective as desired. This concludes the proof.		
	\end{proof}

\end{document}